\newcommand{\R}{\mathbb{R}}
\newcommand{\N}{\mathbb{N}}
\theoremstyle{plain}
\newtheorem{theorem}{Theorem}[section]
\newtheorem*{theorem*}{Theorem}
\newtheorem{lemma}[theorem]{Lemma}
\newtheorem*{lemma*}{Lemma}
\newtheorem{prop}[theorem]{Proposition}
\newtheorem*{prop*}{Proposition}
\newtheorem*{corollary*}{Corollary}
\theoremstyle{definition}
\newtheorem*{example*}{e.g.}
\newtheorem{remark}[theorem]{Remark}
\newtheorem*{remark*}{Remark}
\newtheorem*{assumption*}{Assumption}
\numberwithin{equation}{section}
\newtheorem{definition}[theorem]{Definition}
\newtheorem*{definition*}{Definition}
\title{Existence of BV flow via elliptic regularization}
\author{Kiichi Tashiro}
\subjclass{53E10 (primary), 28A75}
\keywords{Elliptic regularization, geometric measure theory, mean curvature flow}
\address{(K.Tashiro) Department of Mathematics, Tokyo Institute of Technology, 2-12-1 Ookayama, Meguroku, Tokyo 152-8551, Japan}
\email{tashiro.k.ai@m.titech.ac.jp}
\begin{document}
	
	\maketitle
	\markboth{Kiichi Tashiro}{Existence of BV flow via elliptic regularization}
	
	\begin{abstract}
		We investigate a mean curvature flow obtained via elliptic regularization, and prove that it is not only a Brakke flow, but additionally a generalized BV flow proposed by Stuvard and Tonegawa. In particular, we show that the change in volume of the evolving phase can be expressed in terms of the generalized mean curvature of the Brakke flow.
	\end{abstract}
	
	\vspace{15mm}
	
	\section{Introduction}
	Arising as the natural $L^2$-gradient flow of the area functional, the mean curvature flow (henceforth referred to as MCF) is arguably one of the most fundamental geometric flows. 
 The unknown of MCF is a one-parameter family $ \{ M_t \}_{ t \geq 0 } $ of surfaces in the Euclidean space (or more generally some Riemannian manifold) such that the normal velocity vector $ V $ of $ M_t $ equals its mean curvature vector $ h $ at each point for every time, i.e.,
	\begin{equation}
		V = h\ \text{ on }\ M_t. \label{MCF}
	\end{equation}
	When given a compact smooth surface $ M_0 $, a unique smooth solution exists for a finite time until singularities such as shrinkage and neck pinching occur. Numerous frameworks of generalized solutions of MCF that allow singularities have been proposed and studied: we mention, among others, the Brakke flow \cite{brakke1978motion}, level set flow \cite{chen1991uniqueness,evans1991motion}, BV flow \cite{luckhaus1995implicit,laux2016convergence,laux2018convergence}, $ L^2 $ flow \cite{roger2008allen,bertini2017stochastic}, generalized BV flow \cite{StuvardTonegawa+2022}. These weak solutions have been investigated by numerous researchers in the last 40 years or so from varying viewpoints.
	\vskip.5\baselineskip
	The aim of the present paper is to show that the flow arising from elliptic regularization \cite{ilmanen1994elliptic} is a generalized BV flow in addition to being a Brakke flow. A generalized BV flow consists of a pair of phase function and Brakke flow, in many ways reminiscent to Ilmanen's enhanced motion described in \cite{ilmanen1994elliptic}, but the one which 
	relates the volume change of phase and the Brakke flow in an explicit manner. When an ($ n-1 $)-dimension smooth MCF $ \{ M_t \}_{ t \geq 0 } $ is the boundary of open sets $ \{ E_t \}_{ t \geq 0 } $, the following equality holds naturally for all $\phi \in C^1_c ( \R^n \times \R^+ )$	:
	\begin{equation}
		\frac{ d }{dt} \int_{ E_{ t }} \phi ( x , t )\ d x = \int_{ E_t } \partial_t \phi ( x , t )\ d x  +\int_{ M_t } \phi ( x , t )\ h ( x , t ) \cdot \nu_{ M_t } ( x )\ d \mathcal{ H }^{ n - 1 } ( x ) . \label{MCFcoareaformula}
	\end{equation}
	Here, $ h ( \cdot , t ) $ is the mean curvature of $M_t$ and $ \nu_{ M_t } $ is the unit outer normal vector of $M_t$. The notion of BV flow utilizes this equality to characterize the motion law, roughly speaking: a family of sets of finite perimeter $\{E_t\}_{t\geq 0}$ is a BV flow if the 
	reduced boundary $M_t:=\partial^* E_t$ has generalized mean curvature vector
	$h(\cdot,t)$ satisfying \eqref{MCFcoareaformula}. The underlying assumption of the BV
	flow is that the generalized mean curvature vector is derived from $\partial^* E_t$.
	The generalized BV flow is
	proposed by Stuvard and Tonegawa \cite{StuvardTonegawa+2022} so that the flow can
	allow possible integer multiplicities ($\geq 2$) for the underlying Brakke flow while still
	keeping the equality \eqref{MCFcoareaformula} for a sets of finite perimeter. When the 
	higher multiplicity portion of the Brakke flow has null measure, the generalized BV flow
	corresponds to a BV flow. 
	Additionally having this equality (\ref{MCFcoareaformula}) has a certain 
	conceptual advantage for the Brakke flow in that some non-uniqueness and
	stability issues of Brakke flow
	can be resolved: Fischer et al. \cite{fischer2020local} showed that a BV flow with smooth initial datum necessarily coincides with the smooth MCF until the time when some singularity appears. 
 There are some conditional existence results for BV flows such as \cite{luckhaus1995implicit,laux2016convergence,laux2018convergence} under an
	assumption that the approximate solutions converge to the limit without loss of surface energy. In the present paper, we prove without 
 any condition that the solution arising from elliptic regularization is 
	a generalized BV solution with possible higher integer multiplicities on the side of Brakke flow. 
	\vskip.5\baselineskip
	We next briefly mention closely related works on the MCF using elliptic regularization.
	Ilmanen's elliptic regularization \cite{ilmanen1994elliptic} 
	gives a Brakke flow by first minimizing weighted area functional with a parameter.
	The solution of this minimization is a translative soliton and smooth (except for a small set of singularity of codimension $7$), and by letting the
 parameter converges to $0$, one obtains a
 Brakke flow as a limit of the smooth flows. 
 One advantage of this method is that one
 can apply White's local
	regularity theorem for this Brakke flow arising from the elliptic
	regularization \cite{brian2005regularity}. Elliptic regularization has also been studied for constructing MCFs with the Neumann boundary conditions by Edelen \cite{Edelen+2020+95+137} and Dirichlet boundary conditions by White \cite{WhiteMCF2021}. In addition, by Schulze and White \cite{SchulzeWhite+2020+281+305}, this method is utilized to construct a MCF with a triple junction by setting up an appropriate minimizing problem within the class of flat chains. 
	\vskip.5\baselineskip
	The key element of the present paper is an estimate of $ L^2 $ boundedness
	of approximate velocity for elliptic regularization. The idea is to find the convergence of the velocity vector representing the motion of phase boundaries using the concept of measure function pairs by Hutchinson \cite{hutchinson1986second}. The existence of velocity leads to absolute continuity of phase boundary measure in space-time with respect to the weight
	measure of the Brakke flow. More precisely, if $ \{ \mu_t \} $ is the Brakke flow, and $ \{ E_t \} $ is the family of sets of finite perimeter driven by $ \mu_t $, we may obtain $ d| \partial S | \ll d \mu_t dt $, where $ S = \{ ( x , t ) \mid x \in E_t \} $
 is a space-time track of $ \{ E_t \} $, and $ | \partial S | $ is the total variation measure of the characteristic function of $S$ in space-time. Once this is done, we may recover the formula 
	\eqref{MCFcoareaformula} using a suitable version of co-area formula from geometric measure theory.
	\vskip.5\baselineskip
	The paper is organized as follows. In Section \ref{Preliminaries}, we set our notation and explain the main result. In Section \ref{EllipticRegularization}, we review an outline of elliptic regularization. In Section \ref{Proofofmainresult}, we construct the approximate velocity and show that the existence of velocity leads to a generalized BV flow, and then we prove that the limit
 of the translative soliton is indeed a generalized BV flow.
	
	\subsection*{Acknowledgment}
	The author would like to thank his supervisor Yoshihiro Tonegawa for his insightful feedback, careful reading, and improving the quality of this paper. The author was supported by JST, the establishment of university fellowships towards the creation of science technology innovation, Grant Number JPMJFS2112.
	
	\section{Preliminaries and Main Results}
	\label{Preliminaries}
	\subsection{Basic Notation}
	We shall use the same notation for the most part adopted in \cite[Section 2]{StuvardTonegawa+2022}.
	In particular, the ambient space we will be working in is the Euclidean space $ \R^n $
	or its open subset $U$, and $ \R^+ $ will denote the interval $ [ 0 , \infty ) $. The coordinates $ ( x , t ) $ are set in the product space $ \R^n \times \R $, and $ t $ will be thought of and referred to as ``time''. We will denote $ \mathbf{ p } $ and $ \mathbf{ q } $ the projections of $ \R^n \times \R $ onto its factor, so that $ \mathbf{ p } ( x , t ) = x $ and $ \mathbf{ q } ( x , t ) = t $. If $ A \subset \R^n $ is (Borel) measurable, $ \mathcal{L}^n(A) $ will denote the Lebesgue measure of $ A $, whereas $ \mathcal{H}^k (A) $ denotes the $ k $-dimensional Hausdorff measure of $ A $. When $ x \in \R^n $ and $ r > 0 $, $ B_r ( x ) $ denotes the closed ball centered at $ x $ with radius $ r $. More generally, if $ k $ is an integer, then $ B^k_r ( x ) $ will denote closed balls in $ \R^k $. The symbols $ \nabla , \nabla^{ \prime } , \Delta , \nabla^2 $ denote the spatial gradient and the full gradient in $ \R^n \times \R $, Laplacian, and Hessian, respectively. The symbol $ \partial_t $ will denote the time derivative.
	\vskip.5\baselineskip
	A positive Radon measure $ \mu $ on $\R^n $ (or ``space-time'' $\R^{n+1} $) is always also regarded as a positive linear functional on the space $ C^0_c ( \R^n) $ of continuous and compactly supported functions, with pairing denoted $ \mu ( \phi ) $ for $ \phi \in C^0_c (\R^n) $. The restriction of $ \mu $ to a Borel set $ A $ is denoted $ \mu \llcorner_A $, so that $ ( \mu \llcorner_A ) ( E ) := \mu ( A \cap E ) $ for any $ E \subset \R^n$. The support of $ \mu $ is denoted $ \mathrm{ supp }\, \mu $, and it is the closed set defined by
	\[
	\mathrm{ supp }\, \mu := \Bigl\{ x \in \R^n \mid \mu ( B_r ( x ) ) > 0 \text{ for all } r > 0\ \Bigr\}.
	\]
	For $ 1 \leq p \leq \infty $, the space of $ p $-integrable 
 functions with respect to $ \mu $ is denoted $ L^p ( \mu ) $.
 If $\mu=\mathcal L^n$, $ L^p (\mathcal L^n ) $
 is simply written $ L^p ( \R^n ) $. For a signed or vector-valued measure $ \mu $, $ | \mu | $ denotes its total variation. For two Radon measures $ \mu $ and $ \overline{\mu} $, when the measure $ \overline{\mu} $ is absolutely continuous with respect to $ \mu $, we write $ \overline{\mu} \ll \mu $.
	We say that a function $ f \in L^1 ( \R^n ) $ has a bounded variation, written $ f \in BV ( \R^n ) $, if
	\[
	\sup \left\{ \int_{\R^n} f \,\mathrm{ div } X\ dx\ \middle|\ X \in C^1_c ( \R^n ; \R^n ) ,\ \| X \|_{ C^0 } \leq 1 \right\} < \infty.
	\]
	If $ f \in BV ( \R^n ) $, then there exists an $ \R^n $-valued Radon measure (which we will call the measure derivative of $ f $ denoted by $ \nabla f $) satisfying
	\[
	\int_{\R^n} f \,\mathrm{ div } X\ dx = - \int_{\R^n} X \cdot d \nabla f\ \text{for all}\ X \in C^1_c ( \R^n ; \R^n).
	\]
	For a set $ E \subset \R^n $, $ \chi_E $ is the characteristic function of $ E $, defined by $ \chi_E =1$ if $ x \in E $ and $ \chi_E = 0 $ otherwise. We say that $ E $ has a finite perimeter if $ \chi_E \in BV(\R^n) $. When $ E $ is a set of finite perimeter, then the measure derivative $ \nabla \chi_E $ is the associated Gauss-Green measure, and its total variation $ | \nabla \chi_E | $ is the perimeter measure; by De Giorgi's structure theorem, $ | \nabla \chi_E | = \mathcal{ H }^{ n - 1 } \llcorner_{ \partial^* E } $, where $ \partial^* E $ is the reduced boundary of $ E $, and $ \nabla \chi_E = - \nu_E | \nabla \chi_E | = - \nu_E \mathcal{ H }^{ n - 1 } \llcorner_{ \partial^* E } $, where $ \nu_E $ is the outer pointing unit normal vector field to $ \partial^* E $.
	\vskip.5\baselineskip
	A subset $ M \subset \R^n $ is countably $ k $-rectifiable if it is $ \mathcal{ H }^k $-measurable and admits a covering 
	\[
	M \subset Z \cup \bigcup_{ i \in \N } f_i ( \R^k )
	\]
	where $ \mathcal{ H }^k ( Z ) = 0 $ and $ f_k : \R^k \to \R^n $ is Lipschitz. 
 If $M$ is countably $ k $-rectifiable and $\mathcal H^k(M)<\infty$, $M$ has a measure-theoretic tangent plane called approximate tangent plane for $\mathcal H^k$-a.e.$\,x\in M$ (\cite[Theorem 11.6]{simon1983lectures}),
 denoted by $T_x M$. We may simply refer to it as the tangent plane at $x\in M$ without fear of confusion. 
	A Radon measure $ \mu $ is said to be $ k $-rectifiable if there are a countably $ k $-rectifiable set $ M $ and a positive function $ \theta \in L^1( \mathcal{ H }^k \llcorner_{ M } ) $ such that $ \mu = \theta \mathcal{ H }^k \llcorner_{ M } $. This function $ \theta $ is called multiplicity of $ \mu $. The approximate tangent
	plane of $M$ in this case (which exists $\mu$-a.e.) is denoted by $T_x\,\mu$. 
	When $ \theta $ is an integer for $ \mu$-a.e., $ \mu $ is said to be integral. The first variation $ \delta \mu : C^1_c ( \R^n ; \R^n ) \to \R $ of a rectifiable Radon measure $ \mu $ is defined by
	\[
	\delta \mu ( X ) = \int_{\R^n } \mathrm{ div }_{ T_x\, \mu } X\ d\mu,
	\]
	where $ P_{ T_x \,\mu } $ is the orthogonal projection from $ \R^n $ to $ T_x\, \mu $, and $ \mathrm{ div }_{ T_x\, \mu } X = \mathrm{ tr } ( P_{ T_x\, \mu } \nabla X ) $. For an open set $ U \subset \R^n $, the total variation $ | \delta \mu | ( U ) $ of $ \mu $ is defined by
	\[
	| \delta \mu | ( U ) = \sup \left\{ \delta \mu ( X )\ \middle|\ X \in C^1_c ( U ; \R^n ) ,\ \| X \|_{C^0} \leq 1 \right\}.
	\]
	If the total variation $ | \delta \mu |(\tilde U)$ is finite for any bounded subset $\tilde U$ of $ U $, then $ \delta \mu $ is called locally bounded, and we can regard $ | \delta \mu | $ as a measure.\ If $ | \delta \mu | \ll \mu $, then the Radon--Nikod\'ym derivative (times $-1$) is called
	the generalized mean curvature vector $ h $ of $ \mu $, and we have
	\[
	\delta \mu ( X ) = - \int_{ \R^n } X \cdot h\ d\mu \quad \text{for all } X \in C^1_c ( \R^n; \R^n ).
	\]
	If $ \mu $ is integral, then $ h $ and $ T_x\, \mu $ are orthogonal for $ \mu $-a.e.~by
	Brakke's perpendicularity theorem \cite[Chapter 5]{brakke1978motion}.
	
	\subsection{Weak Notions of Mean Curvature Flow and main result}
	In this subsection, we introduce some weak solutions to the MCF. We briefly define and comment upon the three of interest in the present paper: We begin with the notion of Brakke flow introduced by Brakke \cite{brakke1978motion}.
	\begin{definition}
		A family of Radon measures $ \{  \mu_t \}_{ t \in \R^+ } $ in
  $\mathbb R^n$ 
  is \textit{an} ($ n - 1 $)-\textit{dimensional Brakke flow}                        
		if the following four conditions are satisfied:
		\begin{description}
			\item[\quad\textup{(1)}] For a.e. $ t \in \R^+ $, $\mu_t$ is integral and 
			$ \delta \mu_t $ is locally bounded and absolutely continuous with respect to $  \mu_t $ (thus the generalized mean curvature exists for a.e.~$t$, denoted by $h$).
			\item[\quad\textup{(2)}] For all $ s>0 $ and all compact set $ K \subset \R^n$, $ \sup_{ t \in [ 0 , s ] } \mu_t ( K ) < \infty $.
			\item[\quad\textup{(3)}] The generalized mean curvature $ h $ satisfies $ h \in L^2 (d\mu_tdt) $.
			\item[\quad\textup{(4)}] For all $ 0 \leq t_1 < t_2 < \infty $ and all test functions $ \phi \in C^1_c (\R^n \times \R^+ ; \R^+ )$,
			\begin{equation}
				\begin{split}
					\mu_{ t_2 } ( &\phi ( \cdot , t_2 ) ) - \mu_{ t_1 } ( \phi ( \cdot , t_1 ) )\\
					&\leq \int_{ t_1 }^{ t_2 } \int_{ \R^n} \Big( \nabla \phi ( x , t ) - \phi ( x , t )\,h ( x , t ) \Big) \cdot h ( x , t ) + \partial_t \phi ( x , t )\ d \mu_t ( x ) dt.
				\end{split}
				\label{Brakkeineq}
			\end{equation}
		\end{description}
		\label{Brakke}
	\end{definition}
	The inequality (\ref{Brakkeineq}) is motivated by the following identity,
	\begin{equation}
		\left.\int_{ M_t } \phi ( x , t )\ d\mathcal{ H }^{ n - 1 }\right|_{ t = t_1 }^{ t_2 } = \int_{ t_1 }^{ t_2 } \int_{ M_t } ( \nabla \phi - \phi\,h ) \cdot V + \partial_t \phi\ d\mathcal{ H }^{ n - 1 } dt \label{motivate}
	\end{equation}
	where $ M_t $ is an ($ n - 1 $)-dimensional smooth surface, $ h $ is the mean curvature vector, and $ V $ is the normal velocity vector of $ M_t $.\ In particular, if $\{M_t\}_{t\in[0,T)}$ is a smooth MCF (hence $V=h$), setting $  \mu_t := \mathcal{ H }^{ n - 1 } \llcorner_{ M_t } $ defines a Brakke flow for which (\ref{Brakkeineq}) is satisfied with the equality. Conversely, if $  \mu_t = \mathcal{ H }^{ n - 1 } \llcorner_{ M_t } $ with smooth $ M_t $ satisfies (\ref{Brakkeineq}), then one 
	can prove that $ \{ M_t \}_{ t \in [ 0 , T ) } $ is a classical solution to the MCF. The 
 notion of Brakke flow is equivalently (and 
 originally in \cite{brakke1978motion}) formulated in the framework of varifolds, but we use the above formulation using Radon measures, mainly for convenience. 
	\vskip.5\baselineskip
	The following definition of $ L^2 $ flow (modified slightly for our purpose) was given by Mugnai and R\"{o}ger \cite{roger2008allen}. 
	\begin{definition}[$ L^2 $ flow]
 A family of Radon measures $ \{  \mu_t \}_{ t \in \R^+ } $ in $\R^n$ is \textit{an} ($ n - 1 $)-\textit{dimensional $ L^2 $ flow} if it is satisfies (1)-(2) in Definition \ref{Brakke} as well as the following:
		\begin{description}
			\item[\quad\textup{(a)}] The generalized mean curvature $ h ( \cdot , t ) $ (which exists for a.e. $ t \in \R^+ $ by (1)) satisfies $ h ( \cdot , t ) \in L^2(  \mu_t ; \R^n ) $, and $ d \mu := d  \mu_t d t $ is a Radon measure on $ \R^n \times \R^+ $.
			\item[\quad\textup{(b)}] There exists a vector field $ V \in L^2 ( \mu ; \R^n ) $ and a constant $ C=C(\mu_t) > 0 $ such that
			\begin{description}
				\item[\quad\textup{(b'1)}] $ V ( x , t ) \perp T_x \, \mu_t $ for $ \mu $-a.e. $ ( x , t ) \in \R^n \times\R^+$,
				\item[\quad\textup{(b'2)}] For every test functions $ \phi \in C^1_c ( \R^n \times ( 0 , \infty ) ) $, it holds
				\begin{equation}
					\left| \int_{ 0 }^{ \infty } \int_{\R^n} \partial_t \phi ( x , t ) + \nabla \phi ( x , t ) \cdot V ( x , t )\ d \mu_t ( x ) dt\ \right| \leq C\,\| \phi \|_{ C^0 }.
					\label{L2flow}
				\end{equation}
			\end{description}
		\end{description}
		\label{L2def}
	\end{definition}
	The vector field $ V $ satisfying (\ref{L2flow}) is called the generalized velocity vector in the sense of $ L^2 $ flow. This definition interprets equality (\ref{motivate}) as a functional expression of the area change.
	\vskip.5\baselineskip
	Finally, we introduce the concept of generalized BV flow suggested by Stuvard and Tonegawa \cite{StuvardTonegawa+2022}.
	\begin{definition}[Generalized BV flow]
 Let $ \{ \mu_t \}_{ t \in \R^+ } $ and $ \{ E_t \}_{ t \in \R^+ } $ be families of Radon measures and sets of finite perimeter, respectively. The pair $ ( \{ \mu_t \}_{ t \in \R^+ } , \{ E_t \}_{ t \in \R^+} ) $ is \textit{a generalized BV flow} if all of the following hold:
		\begin{description}
			\item[\quad\textrm{(i)}] $ \{ \mu_t\}_{t\in\R^+} $ is a Brakke flow.
			\item[\quad\textrm{(ii)}] For all $ t \in \R^+$, $ | \nabla \chi_{ E_t } | \leq  \mu_t $.
			\item[\quad\textrm{(iii)}] For all $ 0 \leq t_1 < t_2 < \infty $ and all test functions $ \phi \in C^1_c ( \R^n \times \R^+ ) $,
			\begin{equation}
				\left.\int_{ E_t } \phi ( x , t )\ dx\ \right|^{ t_2 }_{ t = t_1 } = \int_{ t_1 }^{ t_2 } \int_{ E_t } \partial_t \phi ( x , t )\ dx dt + \int_{ t_1 }^{ t_2 } \int_{ \partial^* E_t } \phi ( x , t )\,h ( x , t )\cdot\nu_{E_t}(x)\, d\mathcal{ H }^{ n - 1 } ( x ) dt.
				\label{theareachange}
			\end{equation}
		\end{description}
		\label{generalizedBVflow}
	\end{definition}
	If $ \mu_t $ and $ E_t $ satisfy the above definition, we say ``$V=h$'' in the sense of generalized BV flow. This definition expresses that the interface $ \partial^* E_t $ is driven by the mean curvature of $\mu_t $. If $ \mu_t = | \nabla \chi_{ E_t } | $ for a.e.~$t$, the characterization \eqref{theareachange} coincides with the notion of BV flow considered by Luckhaus--Struzenhecker in \cite{luckhaus1995implicit} since
	the mean curvature of $\partial^* E_t$ is naturally defined to be $h(\cdot,t)$ in this case. On 
	the other hand, while the original BV flow is characterized only by \eqref{theareachange}, here $\mu_t$ is additionally a Brakke flow to which one can apply White's local regularity theorem \cite{brian2005regularity} (see \cite{kasai2014general,tonegawa2014second,stuvard2022endtime} for more general regularity theorems for Brakke flow).
	\begin{remark}
		Note that one can prove that Brakke flow is an $L^2$ flow of $V=h$ in general,
		but the opposite implication may not hold in general. The following is a simple counterexample. Define
		\[
		E_t =
		\begin{cases}
			\ \emptyset &( 0 \leq t < 1 )\\
			\ \left\{ x \in \R^n\ \middle|\ | x |^2 \leq 1 - 2 ( n - 1 ) ( t - 1 )\ \right\} &\left( 1 \leq t \leq 1 + \frac{ 1 }{ 2 ( n - 1 ) } \right),
		\end{cases}
		\]
		and consider $\mu_t:=\mathcal H^{n-1}\llcorner_{\partial E_t}$. One can show that it defines
		an $L^2$ flow with $V=h$ but it is not a Brakke flow. 
	\end{remark}
	The following claim is the essence of the main result of the present paper.
	\begin{theorem}
		\label{mainresult}
        Suppose that $E_0\subset \R^n$ is a set of finite
        perimeter and consider the initial value problem of MCF starting from $\partial^* E_0$. 
        Then, 
		the elliptic regularization of Ilmanan \cite{ilmanen1994elliptic} produces a generalized BV flow of $ V = h $. Namely, in addition to a
  Brakke flow $\{\mu_t\}_{t\in\R^+}$ with $\mu_0=\mathcal H^{n-1}\llcorner_{\partial^* E_0}$ (whose existence was
  proved in \cite{ilmanen1994elliptic}), there exists 
  a family of sets of finite perimeter $\{E_t\}_{t\in\R^+}$
  such that $(\{\mu_t\}_{t\in\R^+},\{E_t\}_{t\in\R^+})$
  is a generalized BV flow. 
	\end{theorem}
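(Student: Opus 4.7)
\medskip

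\textbf{Plan of proof.} The plan is to exploit Ilmanen's elliptic regularization \cite{ilmanen1994elliptic} at the level of \emph{space-time} sets of finite perimeter, not just at the level of varifolds. For each $\varepsilon>0$, the elliptic regularization produces a translating soliton $\Sigma^{\varepsilon}\subset\R^n\times\R^+$ which is (up to the codimension-$7$ singular set) the smooth boundary of an open set $\tilde E^{\varepsilon}$, and one defines the time-slices $E_t^{\varepsilon}:=\{x\in\R^n\mid (x,t/\varepsilon)\in\tilde E^{\varepsilon}\}$ (with the usual rescaling) together with the Radon measures $\mu_t^{\varepsilon}$ which are known to converge to a Brakke flow $\{\mu_t\}$ along a subsequence $\varepsilon_j\downarrow 0$. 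I would first observe that, because $\tilde E^{\varepsilon}$ is locally of finite perimeter in $\R^n\times\R^+$, each $E_t^{\varepsilon}$ is of finite perimeter and the space-time track $S^{\varepsilon}:=\{(x,t)\mid x\in E_t^{\varepsilon}\}$ has $\chi_{S^{\varepsilon}}\in BV_{loc}(\R^n\times\R^+)$; the spatial part of $\nabla\chi_{S^{\varepsilon}}$ is $-\nu_{E_t^{\varepsilon}}\mathcal H^{n-1}\llcorner_{\partial^{*}E_t^{\varepsilon}}dt$.

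The heart of the argument is a uniform $L^{2}$ estimate for an approximate normal velocity $V^{\varepsilon}$. On the smooth translator $\Sigma^{\varepsilon}$ the translating equation reads, up to constants, $h_{\Sigma^{\varepsilon}}=\tfrac{1}{\varepsilon}(\nu_{\Sigma^{\varepsilon}}\cdot e_{n+1})^{\perp}$, so that the geometric normal velocity of the slice $E_t^{\varepsilon}$ is exactly the mean curvature $h^{\varepsilon}(\cdot,t)$ of $\mu_t^{\varepsilon}$ (this is the content of Ilmanen's identification of translators with Brakke flows). Combined with the usual Brakke $L^2$-estimate $\int_{0}^{T}\!\int|h^{\varepsilon}|^{2}\,d\mu_t^{\varepsilon}dt\le C(T)$ coming from Definition \ref{Brakke}(3) applied to the approximate flow, this yields a uniform $L^{2}$-bound for the approximate velocity $V^{\varepsilon}$ in the space-time measure $d\mu_t^{\varepsilon}dt$. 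The velocity $V^{\varepsilon}$ is the object that controls the time component of $\nabla\chi_{S^{\varepsilon}}$: for any $\phi\in C^{1}_c(\R^n\times\R^+)$ a direct computation on the smooth soliton gives
\[
\int \partial_t\phi\ d\mathcal L^{n+1}\llcorner_{S^{\varepsilon}}
=-\int\phi\,(V^{\varepsilon}\cdot\nu_{E_t^{\varepsilon}})\,d\mathcal H^{n-1}\llcorner_{\partial^{*}E_t^{\varepsilon}}dt,
\]
so that $|\partial_t\chi_{S^{\varepsilon}}|\le|V^{\varepsilon}|\,d|\nabla\chi_{E_t^{\varepsilon}}|dt\le |V^{\varepsilon}|\,d\mu_t^{\varepsilon}dt$ by $|\nabla\chi_{E_t^{\varepsilon}}|\le\mu_t^{\varepsilon}$ (which holds at the approximate level because $\partial^{*}E_t^{\varepsilon}\subset \mathrm{supp}\,\mu_t^{\varepsilon}$ with multiplicity one). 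Therefore $\{\chi_{S^{\varepsilon}}\}$ is uniformly bounded in $BV_{loc}$ and a subsequence converges in $L^{1}_{loc}$ to $\chi_{S}$ with $S=\{(x,t)\mid x\in E_t\}$ for some time-measurable family $\{E_t\}$ of sets of finite perimeter, giving condition (ii) of Definition \ref{generalizedBVflow} in the limit by lower semicontinuity.

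Next I would pass to the limit in the velocity using Hutchinson's measure-function pair framework \cite{hutchinson1986second}. The uniform $L^{2}$-bound on $(V^{\varepsilon},\mu_t^{\varepsilon}dt)$ gives a limit $V\in L^{2}(d\mu_t dt;\R^n)$, with $V(x,t)\perp T_x\mu_t$ for $\mu_t dt$-a.e.~$(x,t)$ (by Brakke's perpendicularity theorem applied to each $h^{\varepsilon}$ and the closure of orthogonality under measure-function pair convergence). The same framework, applied to $h^{\varepsilon}$, identifies $V=h$ in $L^{2}(d\mu_t dt)$ where $h$ is the mean curvature of the limiting Brakke flow (this is essentially the identification used in Ilmanen's original convergence proof, now transferred to the velocity). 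Passing $|\partial_t\chi_{S^{\varepsilon}}|\le|V^{\varepsilon}|\,d\mu_t^{\varepsilon}dt$ to the limit yields $|\partial_t\chi_S|\le|V|\,d\mu_t dt$, hence in particular $|\partial S|\ll d\mu_t dt$ in space-time.

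Finally, the identity \eqref{theareachange} follows from a Vol'pert / slicing-coarea argument: for a.e.~$t$, the slice of $\partial^{*}S$ at height $t$ is $\partial^{*}E_t$, and the time component of $\nabla\chi_S$ equals $-(V\cdot\nu_{E_t})d|\nabla\chi_{E_t}|dt$; combining with $V=h$ on $\mathrm{supp}\,\mu_t$ gives exactly the right-hand side of (iii). The main obstacle I anticipate is the uniform $L^{2}$ control of $V^{\varepsilon}$ on the singular part of $\Sigma^{\varepsilon}$ (ensuring the estimate is not lost through the codimension-$7$ singular set) and, in the limit step, carefully transferring the perpendicularity and the identification $V=h$ through the measure-function pair convergence when $\mu_t$ may carry higher integer multiplicity — this is the point where one truly needs the ``generalized'' rather than classical BV formulation, and where the dominated reduction $|\nabla\chi_{E_t}|\le\mu_t$ must be kept intact under weak-$*$ limits.
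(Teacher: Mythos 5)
Your overall plan (space-time sets of finite perimeter, $L^2$ bound for an approximate slice velocity coming from the translator equation, Hutchinson measure-function pair compactness, absolute continuity of $\nabla'\chi_E$ w.r.t.\ $d\mu_t\,dt$, then a slicing argument) is the right skeleton and matches the paper in spirit. However, there are two genuine gaps where your sketch glosses over the technical core.

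\textbf{The degenerate coarea region.} The slice velocity is $\mathbf q(\nu_{S^\varepsilon})/|\mathbf p(\nu_{S^\varepsilon})|$, which blows up on the nearly horizontal part of $\partial^* S^\varepsilon$ where $|\mathbf p(\nu_{S^\varepsilon})|\approx 0$. Your uniform $L^2$ estimate is not available there, and the co-area/slicing decomposition you invoke degenerates on that set. The paper handles this by truncating the approximate velocity off the bad set $\Sigma^{\varepsilon,2}$ and showing, using the Euler--Lagrange identity $\varepsilon h^\varepsilon=-P_{T^\perp}(\mathbf e_{n+1})$ together with the apriori bound $\int_{\partial^*S^\varepsilon}\varepsilon|h^\varepsilon|^2\le\mathcal H^{n-1}(\partial^*E_0)$ and Markov's inequality, that $|\nabla'\chi_{S^\varepsilon}|(\Sigma^{\varepsilon,k})\to 0$ as $\varepsilon\to0$ (Lemma \ref{discrilemma}). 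Without this cut-off and its quantitative vanishing, both your $L^2$ estimate for $V^\varepsilon$ and your claim that $d|\nabla\chi_{E^\varepsilon_t}|dt\rightharpoonup d\mu_t\,dt$ (needed to even apply Hutchinson's theorem) are unsupported.

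\textbf{The identification ``$V=h$''.} Your final step asserts that the limit velocity $V$ from the measure-function pair convergence can be identified with the generalized mean curvature $h$ in $L^2(d\mu_t\,dt)$, and then substituted into the slicing formula. This is not established, and the issue is exactly the higher multiplicity that the ``generalized'' BV framework is designed to accommodate: the measure-function pair limit weights $V$ against $\mu_t$, which may carry multiplicity $\geq 2$, whereas the target identity \eqref{theareachange} integrates $h\cdot\nu_{E_t}$ against the multiplicity-one measure $\mathcal H^{n-1}\llcorner_{\partial^*E_t}$. The paper never identifies $V$ with $h$; after establishing $d|\nabla'\chi_E|\ll d\mu_t\,dt$ (for which $V$ is a tool, not an endpoint), it proves a geometric structure theorem (Proposition \ref{princi}) using the $L^2$-flow perpendicularity $(h,1)\in T_{(x,t)}\mu$, the upper density bound from Huisken's monotonicity formula, and the slicing theorem for sets of finite perimeter, concluding that $T_{(x,t)}\mu=T_{(x,t)}(\partial^*E)$ $\mathcal H^n$-a.e.\ on $\partial^*E$ and that the coarea factor $|\nabla^{\partial^*E}\mathbf q|$ equals $(1+|h|^2)^{-1/2}$. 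The final formula then follows from Gauss--Green plus the coarea formula, with $h$ entering through the geometry of $\nu_E$, never through a pointwise comparison with $V$. Your single line ``combining with $V=h$ on $\operatorname{supp}\mu_t$'' is precisely where the argument must be replaced by this structural analysis; as stated it would fail, or at best would require a proof as long as the one it is meant to shortcut.

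A smaller point: the measures $\mu_t^\varepsilon=|\nabla'\chi_{S^\varepsilon(t)}|$ live on $\R^{n+1}$ while $|\nabla\chi_{E_t^\varepsilon}|$ lives on $\R^n$, so the comparison $|\nabla\chi_{E_t^\varepsilon}|\leq\mu_t^\varepsilon$ is not meaningful as written; the actual comparison in the paper goes through the $z$-averaging (the test function $\theta$) and Lemma \ref{E<<mu}, and only at the limit level, not at the approximate level.
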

 In the present paper, we discuss the case 
 where the flow is considered in $\R^n$. Since the analysis
 is local in nature, the same characterization holds equally (with appropriate modifications) for flows in \cite{Edelen+2020+95+137,SchulzeWhite+2020+281+305,WhiteMCF2021}. 
	
	\section{Review on the existence of Brakke flow via elliptic regularization}
	\label{EllipticRegularization}
	In this section, we briefly review Ilmanen's proof on 
 the existence of Brakke flow in \cite{ilmanen1994elliptic} for the convenience of the reader (see also a lecture note on MCF by White \cite{WhiteMCFlecture2015}).
	\subsection{Translative Functional and Euler-Lagrange equation}
	\label{translative}
	The method of construction in \cite{ilmanen1994elliptic} uses the framework of rectifiable current and for general codimensional case. Since we are concerned only with the
 hypersurface case, we work with surfaces realized as boundaries of set of finite perimeter. In the following, the time variable is temporarily denoted as $ z $. Let $ \varepsilon > 0 $ be fixed. The symbol $ \mathbf{ e }_{ n + 1 } $ will denote the standard basis pointing the time direction, i.e., $ \mathbf{ e }_{ n + 1 } = ( 0 , \ldots , 0 , 1 ) \in \R^{n+1}$. We define the following functional for a set of finite perimeter $ S \subset \R^{n+1} $:
	\begin{equation}
		I^{ \varepsilon } ( S ) := \frac{ 1 }{ \varepsilon } \int_{ \R^{n+1} } \exp\Big( - \frac{ z }{ \varepsilon } \,\Big)\ d | \nabla^{ \prime } \chi_S | ( x , z ).
	\end{equation}
	We will say that a stationary point $ S^{ \varepsilon } $ of $ I^{ \varepsilon } $ is a translative soliton of the MCF with velocity $ - ( 1 / \varepsilon ) \mathbf{ e }_{ n + 1 } $. We consider a set of finite perimeter $ E_0 \subset \R^n $ as an initial datum and find a stationary point $ S^{ \varepsilon } $ for $ I^{ \varepsilon } $ by minimization. The existence
 of a minimizer follows from the compactness theorem of set of finite perimeter (see \cite[Section 3.2]{ilmanen1994elliptic}).
	\begin{lemma}
		Let $ E_0 \subset \R^n $ be a set of finite perimeter. Then there exists a set of finite perimeter $ S^{ \varepsilon } \subset \R^n \times \R $ such that
		\begin{description}
			\item[\quad\textup{(1)}] $ S^{ \varepsilon } \subset \R^n \times \R^+ $ and $\{z=0\}\cap \partial^* S^\varepsilon=E_0$,
			\item[\quad\textup{(2)}] $ I^{ \varepsilon } ( S^{ \varepsilon } ) \leq \mathcal{H}^{ n - 1 } ( \partial^* E_0 ) $,
			\item[\quad\textup{(3)}] $ S^{ \varepsilon } $ is a minimizer for $ I^{ \varepsilon } $ among the set
   $\tilde S\subset\R^n\times \R^+$ with 
   $\{z=0\}\cap\partial^*\tilde S=E_0$.
		\end{description}
		\label{Ellipticstationary}
	\end{lemma}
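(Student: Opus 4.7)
The plan is to apply the direct method of the calculus of variations. I will produce the minimizer in three stages: an explicit competitor yielding the upper bound (2), $BV$-compactness and lower semicontinuity for a minimizing sequence, and a trace argument to preserve the initial condition at $\{z=0\}$.

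For the competitor I would take the straight cylinder $S_0 := E_0 \times \R^+$. Its reduced boundary in $\R^n\times(0,\infty)$ is $\partial^*E_0\times(0,\infty)$, with $|\nabla'\chi_{S_0}| = \mathcal{H}^{n-1}\llcorner_{\partial^*E_0}\otimes \mathcal{L}^1\llcorner_{\R^+}$ on the lateral side, and Fubini gives
\[
I^\varepsilon(S_0) = \frac{1}{\varepsilon}\int_0^\infty e^{-z/\varepsilon}\,dz \cdot \mathcal{H}^{n-1}(\partial^*E_0) = \mathcal{H}^{n-1}(\partial^*E_0),
\]
showing that the admissible class is non-empty and $\inf I^\varepsilon\leq \mathcal{H}^{n-1}(\partial^*E_0)<\infty$. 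For a minimizing sequence $\{\tilde S_k\}$, I would then observe that on any slab $\R^n\times[0,T]$ the weight $\varepsilon^{-1}e^{-z/\varepsilon}$ is bounded below by the positive constant $\varepsilon^{-1}e^{-T/\varepsilon}$, so the uniform bound on $I^\varepsilon(\tilde S_k)$ forces $\sup_k |\nabla'\chi_{\tilde S_k}|(\R^n\times[0,T])<\infty$. Combining this with the trivial bound $\chi_{\tilde S_k}\leq 1$, standard $BV$-compactness plus a diagonal argument over bounded domains extracts a subsequence with $\chi_{\tilde S_k}\to\chi_{S^\varepsilon}$ in $L^1_{\mathrm{loc}}(\R^n\times\R^+)$ for some set of finite perimeter $S^\varepsilon\subset\R^n\times\R^+$, and lower semicontinuity of the weighted perimeter with continuous, positive weight (via its dual supremum representation against divergence-free vector fields) yields $I^\varepsilon(S^\varepsilon)\leq\liminf_k I^\varepsilon(\tilde S_k)$.

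The main obstacle is to verify that the trace condition $\{z=0\}\cap\partial^* S^\varepsilon = E_0$ is preserved in the limit, since $BV$ traces are not continuous under arbitrary $L^1_{\mathrm{loc}}$ convergence. The remedy is that the compactness step supplies precisely the one-sided perimeter bound on a slab $\R^n\times[0,\delta]$ needed for the classical half-space trace theorem for $BV$ functions: under these hypotheses the traces of $\chi_{\tilde S_k}$ at $\{z=0\}$ converge in $L^1_{\mathrm{loc}}(\R^n)$ to the trace of $\chi_{S^\varepsilon}$, and since each $\tilde S_k$ has trace $\chi_{E_0}$ by construction, so does $\chi_{S^\varepsilon}$. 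This gives property (1); properties (2) and (3) then follow from the cylinder comparison and the lower semicontinuity argument, respectively.
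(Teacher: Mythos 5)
Your proposal has the right overall architecture (explicit competitor for the upper bound, $BV$-compactness plus lower semicontinuity), and the cylinder computation and the slab-wise mass bounds are both correct. The gap is in the trace step: you assert that $L^1_{\mathrm{loc}}$ convergence together with a uniform perimeter bound on a slab $\R^n\times[0,\delta]$ forces the traces at $\{z=0\}$ to converge, but no such theorem exists. The trace operator on $BV$ is continuous with respect to \emph{strict} convergence (i.e.\ $L^1$ convergence together with convergence of total variations), not with respect to weak-$*$ convergence. A concrete counterexample with bounded perimeter is $\tilde S_k:= B_1(0)\times(0,1/k)\subset\R^n\times\R^+$: each has trace $\chi_{B_1}$ at $z=0$ and uniformly bounded perimeter on any slab, yet $\chi_{\tilde S_k}\to 0$ in $L^1_{\mathrm{loc}}$ and the limit has trace $0$. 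So the key constraint in the variational problem is not automatically preserved by your compactness argument, which leaves the class over which you are minimizing potentially empty at the limit.

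The way Ilmanen circumvents this (and what the paper is implicitly invoking by citing \cite{ilmanen1994elliptic}) is to formulate the problem in the language of integral currents, where the admissible class is $\{T:\partial T=\partial[\![E_0]\!]\}$ and the Federer--Fleming compactness theorem gives a flat-convergent subsequence whose boundary is preserved in the limit. The boundary constraint is thus stable by construction, unlike the $BV$ trace under weak-$*$ convergence. If you want to stay within the set-of-finite-perimeter framework, you need a replacement argument: for instance, extend all competitors below $z=0$ by the fixed cylinder $E_0\times(-\infty,0)$ so that the constraint becomes a pointwise condition (hence preserved under $L^1_{\mathrm{loc}}$ convergence), and then use minimality to rule out a jump of $|\nabla'\chi_S|$ concentrated on $\{z=0\}$ in the limit. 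Either fix requires a genuinely new idea beyond the ``classical trace theorem'' you invoked, so as written the proof does not close.
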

 \begin{definition}
     We define $ S_z = \{ x \in \R^n \mid ( x , z ) \in S \} $, that is, $S_z$ is the horizontal slice of $S$ at height $z$.
 \end{definition}
 The well-known regularity theory of geometric
 measure theory shows that the support of $|\nabla'\chi_{S^\varepsilon}|$ in $\{z>0\}$ 
 is a smooth hypersurface except for a 
 closed set of codimension $7$ (\cite{simon1983lectures}). 
	For such $ S^{ \varepsilon } $, we calculate the first variation of $ I^{ \varepsilon } $ and obtain the following equations (see \cite[Section 2.6]{ilmanen1994elliptic}).
	\begin{lemma}
		Let $S^\varepsilon$ be as in Lemma \ref{Ellipticstationary} and let $ h^\varepsilon $ denote the mean curvature of $ \partial^* S^{ \varepsilon } $ (computed as a submanifold in $\mathbb R^n\times(0,\infty)$). Then we have all of the following for $ | \nabla^{ \prime } \chi_{ S^{ \varepsilon } } | $-a.e. in $\{z>0\}$:
		\begin{description}
			\item[\quad\textup{(1)}] $ \varepsilon h^\varepsilon + P_{ T_{ ( x , z ) } ( \partial^* S^{ \varepsilon } )^{ \perp } } \left( \mathbf{ e }_{ n + 1 } \right) = 0 $,
			\item[\quad\textup{(2)}] $ | h^\varepsilon | \leq 1 / \varepsilon $,
			\item[\quad\textup{(3)}] $ P_{ T_{ ( x , z ) } ( \partial^* S^{ \varepsilon } )^{ \perp } } ( h^\varepsilon ) = h^\varepsilon $,
			\item[\quad\textup{(4)}] $ \varepsilon^2 | h^\varepsilon |^2 + | P_{ T_{ ( x , z ) } ( \partial^* S ) } ( \mathbf{ e }_{ n + 1 } ) |^2 = 1 $.
		\end{description}
		\label{EulerLagrange}
	\end{lemma}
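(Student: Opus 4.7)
The plan is to derive (1) as the Euler--Lagrange equation of the weighted area functional $I^\varepsilon$, and then recognize (2)--(4) as immediate algebraic consequences of (1) combined with the orthogonal decomposition of $\mathbf{e}_{n+1}$.

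First I would set $f(x,z):=\tfrac{1}{\varepsilon}e^{-z/\varepsilon}$ and view $I^\varepsilon(S)=\int f\,d|\nabla'\chi_S|$ as a weighted area. For any vector field $X\in C^1_c(\R^n\times(0,\infty);\R^{n+1})$ (compactly supported away from the constrained boundary $\{z=0\}$), I would perturb $S^\varepsilon$ by the flow $\phi_s=\mathrm{id}+sX$ and compute
\begin{equation*}
\left.\tfrac{d}{ds}\right|_{s=0} I^\varepsilon((\phi_s)_\# S^\varepsilon)=\int\!\big(f\,\mathrm{div}_{T}X+X\cdot Df\big)\,d|\nabla'\chi_{S^\varepsilon}|,
\end{equation*}
where $T=T_{(x,z)}\partial^*S^\varepsilon$. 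Since $S^\varepsilon$ minimises $I^\varepsilon$ (Lemma \ref{Ellipticstationary}(3)) among competitors sharing the same trace at $\{z=0\}$, this derivative must vanish for every admissible $X$.

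Next, working on the smooth part of $\partial^*S^\varepsilon$ in $\{z>0\}$ (the singular set has codimension $7$ by standard regularity, so it is $\mathcal H^n$-negligible and does not affect integration by parts with compactly supported $X$), I would use $\mathrm{div}_T(fX)=f\,\mathrm{div}_T X+\nabla_T f\cdot X$ together with the tangential divergence theorem $\int\mathrm{div}_T(fX)\,d\mathcal H^n=-\int fX\cdot h^\varepsilon\,d\mathcal H^n$ to rewrite the first variation as
\begin{equation*}
\delta I^\varepsilon(X)=\int\!\big(X\cdot P_{T^{\perp}}(Df)-fX\cdot h^\varepsilon\big)\,d|\nabla'\chi_{S^\varepsilon}|.
\end{equation*}
Substituting $Df=-\tfrac{1}{\varepsilon}f\,\mathbf{e}_{n+1}$ and invoking the fundamental lemma of the calculus of variations, I obtain $fh^\varepsilon=-\tfrac{1}{\varepsilon}f\,P_{T^{\perp}}(\mathbf{e}_{n+1})$, which, since $f>0$, gives exactly (1).

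Statements (2)--(4) then follow immediately: (3) holds because the right-hand side of (1) lies in $T^{\perp}$, so $h^\varepsilon=P_{T^{\perp}}(h^\varepsilon)$; (2) follows from $\varepsilon|h^\varepsilon|=|P_{T^{\perp}}(\mathbf{e}_{n+1})|\leq|\mathbf{e}_{n+1}|=1$; and (4) is obtained by plugging $\varepsilon|h^\varepsilon|=|P_{T^{\perp}}(\mathbf{e}_{n+1})|$ into the Pythagorean identity $|P_{T^{\perp}}(\mathbf{e}_{n+1})|^2+|P_{T}(\mathbf{e}_{n+1})|^2=1$. The only genuine subtlety is justifying the tangential integration by parts in the presence of the (codimension-$7$) singular set; this is handled by standard cutoff/approximation arguments, or more invariantly by interpreting the variation directly at the level of the measure $|\nabla'\chi_{S^\varepsilon}|$, whose first variation as a rectifiable measure is absolutely continuous with mean-curvature density $h^\varepsilon$ on its regular part.
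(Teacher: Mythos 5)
Your derivation is correct and follows the standard first-variation computation that the paper defers to \cite[Section 2.6]{ilmanen1994elliptic} without reproducing: vary by a compactly supported flow in $\{z>0\}$, integrate by parts tangentially on the regular part of $\partial^*S^\varepsilon$ using the codimension-$7$ singular set bound, substitute $Df=-\tfrac1\varepsilon f\,\mathbf{e}_{n+1}$, and read off (1). The deductions of (2)--(4) from (1) via $h^\varepsilon\in T^\perp$ and the Pythagorean decomposition of $\mathbf{e}_{n+1}$ are exactly right, so there is no gap.
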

	By considering the integration of this equation in the time direction, we obtain the following (see \cite[Section 4.5]{ilmanen1994elliptic}).
	\begin{lemma}
		For every $ 0 \leq z_1 < z_2 $,
		\begin{equation}
  \begin{split}
			\int_{ \partial^* S^{ \varepsilon }_{ z_2 } } | P_{ T_{ ( x , z_2 ) } ( \partial^* S^{ \varepsilon } ) } ( \mathbf{ e }_{ n + 1 } ) |\ d \mathcal{ H }^{ n - 1 } ( x ) &+ \int_{ \partial^* S^{ \varepsilon } \cap ( \R^n \times ( z_1 , z_2 ) ) }  \varepsilon | h^\varepsilon |^2 d \mathcal{ H }^n ( x , z ) \\
   &= \int_{ \partial^* S^{ \varepsilon }_{ z_1 } } | P_{ T_{ ( x , z_1 ) } ( \partial^* S^{ \varepsilon } ) } ( \mathbf{ e }_{ n + 1 } ) |\ d \mathcal{ H }^{ n - 1 } ( x ). 
   \end{split}
   \label{timeapriori}
		\end{equation}
		In particular,
		\begin{equation}
        \begin{split}
			\max\Biggl\{\sup_{z>0}\int_{ \partial^* S^{ \varepsilon }_{ z } } | P_{ T_{ ( x , z ) } ( \partial^* S^{ \varepsilon } ) } &( \mathbf{e}_{ n + 1 } ) |\ d \mathcal{ H }^{ n - 1 } (x),\\
            &\int_{ \partial^* S^{ \varepsilon } \cap (\R^n \times ( 0 , \infty )) } \varepsilon | h^\varepsilon |^2\ d \mathcal{ H }^n(x,z)\Biggr\} \leq \mathcal{ H }^{ n - 1 } ( \partial^* E_0 ). \label{initialbdd}
        \end{split}
		\end{equation}
		\label{zinibdd}
	\end{lemma}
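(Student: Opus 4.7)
The plan is to derive \eqref{timeapriori} by applying the first variation formula for the rectifiable varifold $\mu^\varepsilon := \mathcal{H}^n \llcorner_{\partial^* S^\varepsilon}$ to a vertical test vector field, rewriting the resulting identity via the coarea formula; the a priori bound \eqref{initialbdd} will then follow by combining the monotonicity of $A(z) := \int_{\partial^* S^\varepsilon_z} |P_{T_{(x,z)}(\partial^* S^\varepsilon)}(\mathbf{e}_{n+1})|\,d\mathcal{H}^{n-1}(x)$, which is immediate from \eqref{timeapriori}, with the minimization estimate $I^\varepsilon(S^\varepsilon) \leq \mathcal{H}^{n-1}(\partial^* E_0)$ in Lemma \ref{Ellipticstationary}(2). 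The key pointwise ingredient is the identity
\[
h^\varepsilon \cdot \mathbf{e}_{n+1} \;=\; h^\varepsilon \cdot P_{T^\perp}(\mathbf{e}_{n+1}) \;=\; -\varepsilon |h^\varepsilon|^2,
\]
which follows immediately from parts (1) and (3) of Lemma \ref{EulerLagrange}.

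Concretely, I would test the first variation formula $\int \mathrm{div}_{T_x \mu^\varepsilon} X\,d\mu^\varepsilon = -\int X \cdot h^\varepsilon\,d\mu^\varepsilon$ against vector fields of the form $X = f(z)\eta_R(x)\mathbf{e}_{n+1}$, with $f \in C^1_c((0,\infty))$ (so that the translation preserves the Dirichlet-type condition $\{z=0\} \cap \partial^* S^\varepsilon = E_0$) and $\eta_R \in C^1_c(\R^n)$ a horizontal cutoff with $|\nabla\eta_R| \leq C/R$. A short direct calculation gives $\mathrm{div}_{T_{(x,z)}(\partial^* S^\varepsilon)}(f(z)\mathbf{e}_{n+1}) = f'(z)|P_T(\mathbf{e}_{n+1})|^2$; sending $R \to \infty$, so that the cutoff error $\int f(z)\nabla\eta_R \cdot P_T(\mathbf{e}_{n+1})\,d\mathcal{H}^n$ vanishes by finiteness of $\mu^\varepsilon$ on horizontal slabs, reduces the identity to
\[
\int_{\partial^* S^\varepsilon} f'(z) |P_T(\mathbf{e}_{n+1})|^2\,d\mathcal{H}^n \;=\; \varepsilon \int_{\partial^* S^\varepsilon} f(z) |h^\varepsilon|^2\,d\mathcal{H}^n.
\]
Since $|\nabla_M z| = |P_T(\mathbf{e}_{n+1})|$, the coarea formula together with the BV slicing theorem (identifying, for a.e.\ $z$, the $z$-slice of $\partial^* S^\varepsilon$ with $\partial^* S^\varepsilon_z$) rewrites the left-hand side as $\int_0^\infty f'(z) A(z)\,dz$. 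Taking $f$ as a smooth approximation of $\chi_{[z_1, z_2]}$ and passing to the limit, with the $L^2$-bound on $h^\varepsilon$ from Lemma \ref{EulerLagrange}(2) providing domination, yields \eqref{timeapriori}.

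For \eqref{initialbdd}, \eqref{timeapriori} immediately forces $A$ to be non-increasing in $z$, so $\sup_{z>0} A(z) = A(0^+)$. To bound $A(0^+) \leq \mathcal{H}^{n-1}(\partial^* E_0)$, I would apply coarea to $I^\varepsilon(S^\varepsilon) = \frac{1}{\varepsilon}\int_{\partial^* S^\varepsilon} e^{-z/\varepsilon}\,d\mathcal{H}^n$ to obtain $I^\varepsilon(S^\varepsilon) = \frac{1}{\varepsilon}\int_0^\infty e^{-z/\varepsilon} g(z)\,dz$, where $g(z) := \int_{\partial^* S^\varepsilon_z} |P_T(\mathbf{e}_{n+1})|^{-1}\,d\mathcal{H}^{n-1}$. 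Using $|P_T|^{-1} = |P_T| + \varepsilon^2 |h^\varepsilon|^2/|P_T|$ (a consequence of Lemma \ref{EulerLagrange}(4)) together with the ODE $-\varepsilon A'(z) = \varepsilon^2 \int_{\partial^* S^\varepsilon_z} |h^\varepsilon|^2/|P_T|\,d\mathcal{H}^{n-1}$ obtained by differentiating \eqref{timeapriori}, one finds $g = A - \varepsilon A'$, and integration by parts then yields $I^\varepsilon(S^\varepsilon) = A(0^+)$. Combining with Lemma \ref{Ellipticstationary}(2) gives the desired estimate on $A(0^+)$, and \eqref{timeapriori} taken with $z_1 \to 0^+$ and $z_2 \to \infty$ also bounds the $|h^\varepsilon|^2$-integral in \eqref{initialbdd}. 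The main technical obstacle is the justification of the coarea manipulations on the set where $|P_T(\mathbf{e}_{n+1})|$ degenerates to zero---there, Lemma \ref{EulerLagrange}(4) forces $\varepsilon|h^\varepsilon| = 1$, which is precisely what allows the singular contribution to the decomposition of $|P_T|^{-1}$ to be absorbed into the $|h^\varepsilon|^2$ integral.
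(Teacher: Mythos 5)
Your argument is correct, and it is essentially the standard one: the paper itself gives no proof of Lemma \ref{zinibdd}, referring to \cite[Section 4.5]{ilmanen1994elliptic}, and Ilmanen's derivation there is precisely what you reconstruct --- test the first variation of $\mathcal H^n\llcorner_{\partial^* S^\varepsilon}$ against vertical fields $f(z)\mathbf e_{n+1}$ with $f$ compactly supported in $(0,\infty)$ (so that the constraint at $\{z=0\}$ is preserved), use $\mathrm{div}_T(f\mathbf e_{n+1})=f'|P_T(\mathbf e_{n+1})|^2$ together with $h^\varepsilon\cdot\mathbf e_{n+1}=-\varepsilon|h^\varepsilon|^2$ from Lemma \ref{EulerLagrange}(1),(3), pass to $f\to\chi_{[z_1,z_2]}$ via coarea, and combine the resulting monotonicity of $A(z)$ with the energy bound $I^\varepsilon(S^\varepsilon)\le\mathcal H^{n-1}(\partial^* E_0)$.

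The one place you flag as a possible gap --- the coarea manipulation through $|P_T|^{-1}$ on the degenerate set $\{|P_T(\mathbf e_{n+1})|=0\}$ --- can be bypassed entirely, and I would recommend doing so. Instead of rewriting $I^\varepsilon(S^\varepsilon)$ via $|P_T|^{-1}=|P_T|+\varepsilon^2|h^\varepsilon|^2/|P_T|$, decompose the \emph{area} density directly: by Lemma \ref{EulerLagrange}(4), $1=|P_T(\mathbf e_{n+1})|^2+\varepsilon^2|h^\varepsilon|^2$ pointwise $\mathcal H^n$-a.e., so
\[
\varepsilon\,I^\varepsilon(S^\varepsilon)=\int_{\partial^* S^\varepsilon}e^{-z/\varepsilon}|P_T(\mathbf e_{n+1})|^2\,d\mathcal H^n+\varepsilon^2\int_{\partial^* S^\varepsilon}e^{-z/\varepsilon}|h^\varepsilon|^2\,d\mathcal H^n.
\]
The first integral equals $\int_0^\infty e^{-z/\varepsilon}A(z)\,dz$ by coarea, with no division by $|P_T|$ at all (the integrand vanishes where $|P_T|=0$, so the degenerate set contributes nothing). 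For the second, approximate $e^{-z/\varepsilon}$ by $\phi_k(z)e^{-z/\varepsilon}$ with $\phi_k\in C^1_c((0,\infty))$, $\phi_k\uparrow 1$, and apply your first-variation identity $\varepsilon\int f\,|h^\varepsilon|^2\,d\mathcal H^n=\int_0^\infty f'(z)A(z)\,dz$; in the limit the boundary layer of $\phi_k'$ produces $A(0^+)$ and the remainder produces $-\frac1\varepsilon\int_0^\infty e^{-z/\varepsilon}A\,dz$. Adding the two pieces gives $I^\varepsilon(S^\varepsilon)=A(0^+)$ with no reference to the coarea factor on the degenerate set, and hence $\sup_{z>0}A(z)=A(0^+)\le\mathcal H^{n-1}(\partial^* E_0)$ by Lemma \ref{Ellipticstationary}(2); the $|h^\varepsilon|^2$ bound then follows from \eqref{timeapriori} with $z_1\to0^+$, $z_2\to\infty$. (For completeness: the worry you raise is in fact vacuous, since on the smooth part of $\partial^* S^\varepsilon$ a positive-measure set with $|P_T(\mathbf e_{n+1})|=0$ would, by analyticity of solutions to the elliptic Euler--Lagrange equation, force a piece of $\partial^* S^\varepsilon$ to lie in a horizontal hyperplane, where $h^\varepsilon=0$ contradicts Lemma \ref{EulerLagrange}(4); but it is cleaner not to rely on this.)
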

	Now consider $ E^{ \varepsilon } = \kappa_{ \varepsilon } ( S^{ \varepsilon } ) $ in which $ S^{ \varepsilon } $ is shrunk by the map
 $ \kappa_{ \varepsilon } ( x , z ) = ( x , \varepsilon z ) $ in the $ z $ direction. Since $ \kappa_{ \varepsilon } $ is the contraction map by $ \varepsilon $ to $ z $, the determinant of Jacobian matrix of $ \kappa_{\varepsilon} $ on $ \partial^* S^{\varepsilon} $ is $ ( |P_{T_{(x,z)}(\partial^* S^{\varepsilon})} (\nabla^{\prime}\mathbf{p}(x,z))|^2 + \varepsilon^2 |P_{T_{(x,z)}(\partial^* S^{\varepsilon})^{\perp}}(\mathbf{e}_{n+1})|^2)^{1/2} $. Therefore, by Lemma \ref{zinibdd}, we have the following for the mass of $ \partial^* S^{\varepsilon } $ and $ \partial^* E^{\varepsilon} $ (see \cite[Section 5.1, 5.3]{ilmanen1994elliptic}).
	\begin{lemma}
		For any open interval $ A = ( a , b ) \subset \R^+ $, we obtain
		\begin{align}
            &| \nabla^{ \prime } \chi_{ S^{ \varepsilon } } | ( \R^n \times A ) \leq ( \mathcal{L}^1 ( A ) + \varepsilon )\,\mathcal{H}^{n-1} (\partial^* E_0), \label{themassbddS}\\
			&| \nabla^{ \prime } \chi_{ E^{ \varepsilon } } | ( \R^n \times A ) \leq \left( \mathcal{L}^1 ( A ) + \varepsilon^2 + ( \mathcal{L}^1 ( A ) + \varepsilon^2 )^{\frac12} \right) \mathcal{H}^{ n - 1 } ( \partial^* E_0 ). \label{themassbddE}
		\end{align}
		In particular, the result holds for any $ \mathcal{ L }^1 $-measurable set $ A
  \subset\R^+$ by approximation.
		\label{Eapriori}
	\end{lemma}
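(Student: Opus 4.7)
The plan is to derive \eqref{themassbddS} from the pointwise Euler--Lagrange identity in Lemma~\ref{EulerLagrange}(4) by splitting the surface measure into a mean-curvature piece and a height-slope piece, and then to deduce \eqref{themassbddE} from \eqref{themassbddS} by applying the area formula to the rescaling $\kappa_\varepsilon$ and controlling the Jacobian with Cauchy--Schwarz and \eqref{initialbdd}.

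For \eqref{themassbddS}, abbreviating $T=T_{(x,z)}\partial^*S^\varepsilon$, I insert the identity $1=\varepsilon^{2}|h^\varepsilon|^{2}+|P_{T}(\mathbf e_{n+1})|^{2}$ into the integral $|\nabla'\chi_{S^\varepsilon}|(\R^{n}\times A)=\mathcal H^{n}(\partial^{*}S^\varepsilon\cap(\R^{n}\times A))$. The $\varepsilon^{2}|h^\varepsilon|^{2}$-contribution equals $\varepsilon\int\varepsilon|h^\varepsilon|^{2}\,d\mathcal H^{n}\leq\varepsilon\mathcal H^{n-1}(\partial^{*}E_{0})$ by \eqref{initialbdd}. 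For the $|P_{T}(\mathbf e_{n+1})|^{2}$-contribution, the tangential gradient of the height function $\mathbf q$ on $\partial^{*}S^\varepsilon$ is precisely $P_{T}(\mathbf e_{n+1})$, so the coarea/slicing formula for sets of finite perimeter gives
\[
\int_{\partial^{*}S^\varepsilon\cap(\R^{n}\times A)}|P_{T}(\mathbf e_{n+1})|^{2}\,d\mathcal H^{n}=\int_{A}\int_{\partial^{*}S^\varepsilon_{z}}|P_{T}(\mathbf e_{n+1})|\,d\mathcal H^{n-1}\,dz\leq\mathcal L^{1}(A)\,\mathcal H^{n-1}(\partial^{*}E_{0}),
\]
the last step being \eqref{initialbdd} applied uniformly in $z$. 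Summing yields \eqref{themassbddS}.

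For \eqref{themassbddE}, the area formula gives $|\nabla'\chi_{E^\varepsilon}|(\R^{n}\times A)=\int_{\partial^{*}S^\varepsilon\cap(\R^{n}\times A/\varepsilon)}J_\varepsilon\,d\mathcal H^{n}$ with $J_\varepsilon$ the tangential Jacobian of $\kappa_\varepsilon$ recorded before the lemma. Lemma~\ref{EulerLagrange}(1) identifies $|P_{T^{\perp}}(\mathbf e_{n+1})|=\varepsilon|h^\varepsilon|$, so $J_\varepsilon\leq\varepsilon\sqrt{1+|h^\varepsilon|^{2}}\leq\varepsilon(1+|h^\varepsilon|)$ (for $\varepsilon\leq 1$). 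This splits the estimate into
\[
|\nabla'\chi_{E^\varepsilon}|(\R^{n}\times A)\leq\varepsilon\,|\nabla'\chi_{S^\varepsilon}|(\R^{n}\times A/\varepsilon)+\varepsilon\int_{\partial^{*}S^\varepsilon\cap(\R^{n}\times A/\varepsilon)}|h^\varepsilon|\,d\mathcal H^{n}.
\]
Applying \eqref{themassbddS} with $A/\varepsilon$ in place of $A$ bounds the first term by $\varepsilon(\varepsilon+\mathcal L^{1}(A)/\varepsilon)\mathcal H^{n-1}(\partial^{*}E_{0})=(\varepsilon^{2}+\mathcal L^{1}(A))\mathcal H^{n-1}(\partial^{*}E_{0})$. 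For the second, Cauchy--Schwarz together with the bound $\int|h^\varepsilon|^{2}\,d\mathcal H^{n}\leq\varepsilon^{-1}\mathcal H^{n-1}(\partial^{*}E_{0})$ (from \eqref{initialbdd}) and the area bound just used gives $\sqrt{\varepsilon^{2}+\mathcal L^{1}(A)}\,\mathcal H^{n-1}(\partial^{*}E_{0})$. Adding produces \eqref{themassbddE}; the general measurable case then follows by approximating $A$ from outside by countable disjoint unions of open intervals and invoking $\sigma$-subadditivity.

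The one genuinely delicate point is the balance of factors of $\varepsilon$ under $\kappa_\varepsilon$: the naive bound $J_\varepsilon\leq 1$ yields a term $(\varepsilon+\mathcal L^{1}(A)/\varepsilon)\mathcal H^{n-1}(\partial^{*}E_{0})$ which blows up as $\varepsilon\to 0$, so one must exploit the identity $|P_{T^{\perp}}(\mathbf e_{n+1})|=\varepsilon|h^\varepsilon|$ and the fact that $\int\varepsilon|h^\varepsilon|^{2}\,d\mathcal H^{n}$ is uniformly bounded, which Cauchy--Schwarz converts precisely into the desired $\sqrt{\varepsilon^{2}+\mathcal L^{1}(A)}$ term.
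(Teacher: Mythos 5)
Your proof is correct and runs along the same lines as the argument the paper delegates to Ilmanen (\S5.1, \S5.3): decompose the surface measure of $S^\varepsilon$ via the pointwise identity $1=\varepsilon^2|h^\varepsilon|^2+|P_T(\mathbf e_{n+1})|^2$ from Lemma~\ref{EulerLagrange}(4), control the two pieces by \eqref{initialbdd} and the coarea formula respectively, and then push forward by $\kappa_\varepsilon$ using the tangential Jacobian $J_\varepsilon=\bigl(|\mathbf q(\nu_{S^\varepsilon})|^2+\varepsilon^2|\mathbf p(\nu_{S^\varepsilon})|^2\bigr)^{1/2}\leq\varepsilon\sqrt{1+|h^\varepsilon|^2}$ together with Cauchy--Schwarz. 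The arithmetic reproducing the constants in \eqref{themassbddS} and \eqref{themassbddE} checks out exactly.

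One small imprecision is the closing remark that the general measurable case follows ``by approximating $A$ from outside by countable disjoint unions of open intervals and invoking $\sigma$-subadditivity.'' Applied naively, \eqref{themassbddS} on each of countably many disjoint intervals would produce a divergent sum of $\varepsilon$-terms. The correct observation is simpler: your own proof of \eqref{themassbddS} already works verbatim for any $\mathcal L^1$-measurable $A$, since the coarea formula and the bound $\int_{\partial^*S^\varepsilon}\varepsilon|h^\varepsilon|^2\,d\mathcal H^n\leq\mathcal H^{n-1}(\partial^*E_0)$ are not restricted to intervals; the same is then true of \eqref{themassbddE}, because you only apply \eqref{themassbddS} to the (measurable) set $A/\varepsilon$. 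So no approximation step is actually needed. Also, the parenthetical ``for $\varepsilon\leq 1$'' next to $\sqrt{1+|h^\varepsilon|^2}\leq 1+|h^\varepsilon|$ is superfluous, as this inequality holds unconditionally.
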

	For this $ S^{ \varepsilon } $, we define the following notation:
	\begin{equation}
		\sigma_{ - t / \varepsilon } ( x , z ) := \left( x , z - \frac{ t }{ \varepsilon } \right) ,\ S^{ \varepsilon } ( t ) := \sigma_{ - t / \varepsilon } ( S^{ \varepsilon } ) ,\ \mu_t^{ \varepsilon } := | \nabla^{ \prime } \chi_{ S^{ \varepsilon } ( t ) } |.
	\end{equation}
	This $ \mu_t^{ \varepsilon } $ is a Brakke flow on the $ ( x , z ) $ components of $ W^{ \varepsilon } := \{ ( x , z , t ) \in ( \R^n \times \R ) \times [ 0 , \infty ) \mid z > - t / \varepsilon \} $. Since the Brakke flow $ \mu_t^{ \varepsilon } $ satisfies $ \mu_t^{ \varepsilon } ( \R^n \times ( z_1 , z_2 ) ) \leq ( ( z_2 - z_1 ) + \varepsilon )\,\mathcal{ H }^{ n - 1 } ( \partial^* E_0 ) $ by (\ref{themassbddS}), we can apply the compactness theorem for Brakke flow to $ \mu_t^{ \varepsilon } $ \cite[Section 7.1]{ilmanen1994elliptic}. Thus taking a further subsequence from $ \mu_t^{ \varepsilon } $, there exists a Brakke flow $ \{ \overline{ \mu }_t \}_{ t \geq 0 } $ on the $ ( x , z ) $ components of $ W := ( \R^n \times \R \times ( 0 , \infty ) ) \cup ( \R^n \times ( 0 , \infty ) \times \{ 0 \} ) $ such that $ \mu_t^{ \varepsilon } $ converges to $ \overline{ \mu }_t $ as Radon measure. Since $ \overline{ \mu }_t $ is invariant to translations in the $ z $ direction, we have the following from the product lemma \cite[Lemma 8.5]{ilmanen1994elliptic}. See \cite[Section 8]{ilmanen1994elliptic} for the details of the above discussion.
	\begin{lemma}
		Let $ \theta \in C^2_c ( \R ; \R^+ ) $ with $ \int_{\R} \theta ( z )\ d z = 1 $ and $ \mathrm{ supp }\, \theta \subset ( 0 , \infty ) $ be fixed. We define a Radon measure $ \mu_t $ on $ \phi \in C^0_c ( \R^n ; \R^+ ) $ by
		\[
		\mu_t ( \phi ) := \overline{ \mu }_t ( \theta\,\phi ),
		\]
		then $ \mu_t $ is independent of $ \theta $ and the following hold:
		\begin{description}
			\item[\quad\textup{(1)}] $ \overline{ \mu }_t = \mu_t \otimes \mathcal{ L }^1 $ except for countable $ t \geq 0 $,
			\item[\quad\textup{(2)}] $ \{ \mu_t \}_{ t \geq 0 } $ is a Brakke flow on $ \R^n $.
		\end{description}
	\end{lemma}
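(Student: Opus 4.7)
The plan is to exploit the translative-soliton identity for $S^{\varepsilon}$ to prove $z$-translation invariance of the limit $\overline{\mu}_t$, then use a disintegration to define $\mu_t$ as the ``slice'' of $\overline{\mu}_t$, and finally verify that $\{\mu_t\}$ inherits the Brakke-flow axioms on $\R^n$ by testing the Brakke inequality for $\overline{\mu}_t$ against product functions of the form $\theta(z)\phi(x,t)$.

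For the translation invariance, I would first observe that from the definition $S^{\varepsilon}(t) = \sigma_{-t/\varepsilon}(S^{\varepsilon})$ one obtains $S^{\varepsilon}(t+\varepsilon s) = T_{-s}(S^{\varepsilon}(t))$ for every admissible $s\in\R$, where $T_s(x,z):=(x,z+s)$; hence $\mu^{\varepsilon}_{t+\varepsilon s}(\phi)=\mu^{\varepsilon}_t(\phi\circ T_{-s})$ for every $\phi\in C^0_c(\R^n\times\R)$. Fix the subsequence $\varepsilon_k\to 0$ along which $\mu^{\varepsilon_k}_t\to\overline{\mu}_t$ for all $t\geq 0$. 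The function $\tau\mapsto \overline{\mu}_\tau(\phi)$ has at most countably many discontinuities for each fixed $\phi$, a classical consequence of the Brakke inequality (\ref{Brakkeineq}) together with the mass bound from Lemma \ref{Eapriori}. At any such continuity point $t$, both sides of $\mu^{\varepsilon_k}_{t+\varepsilon_k s}(\phi)=\mu^{\varepsilon_k}_t(\phi\circ T_{-s})$ pass to the limit, giving $\overline{\mu}_t(\phi)=\overline{\mu}_t(\phi\circ T_{-s})$, hence $(T_s)_\#\overline{\mu}_t=\overline{\mu}_t$ for every $s\in\R$. Standard disintegration of a $z$-translation-invariant Radon measure then yields a unique Radon measure $\mu_t$ on $\R^n$ with $\overline{\mu}_t=\mu_t\otimes\mathcal{L}^1$, and the chain of equalities $\overline{\mu}_t(\theta\phi)=\mu_t(\phi)\int_\R\theta\,dz=\mu_t(\phi)$ proves both the $\theta$-independence and assertion (1).

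To verify (2), I would transport the Brakke-flow axioms for $\overline{\mu}_t$ through the product structure $\overline{\mu}_t=\mu_t\otimes\mathcal{L}^1$. Integrality of $\mu_t$ follows because the approximate tangent plane of $\overline{\mu}_t$ at $(x,z)$ decomposes as $T_x\mu_t\oplus\R\mathbf{e}_{n+1}$ with equal multiplicity, and Brakke's perpendicularity theorem forces the generalized mean curvature $\overline{h}$ of $\overline{\mu}_t$ to have zero $\mathbf{e}_{n+1}$-component, so $\overline{h}(x,z,t)=(h(x,t),0)$ where $h$ is the generalized mean curvature of $\mu_t$; the $L^2$-bound on $h$ then follows from that for $\overline{h}$ by Fubini's theorem. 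For the Brakke inequality itself, I would apply the Brakke inequality for $\{\overline{\mu}_t\}$ to the test function $\Phi(x,z,t):=\theta(z)\phi(x,t)$ with $\phi\in C^1_c(\R^n\times\R^+;\R^+)$: the $\theta'(z)$ term drops out of $\nabla'\Phi\cdot\overline{h}$ because $\overline{h}$ has no $z$-component, $|\overline{h}|^2=|h|^2$, and integrating out the $z$-variable against $\theta$ (with $\int_\R\theta\,dz=1$) recovers exactly (\ref{Brakkeineq}) for $\{\mu_t\}$.

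The main obstacle is the passage to the limit along the moving time sequence $t+\varepsilon_k s$: one needs that $\mu^{\varepsilon_k}_{t+\varepsilon_k s}(\phi)\to\overline{\mu}_t(\phi)$, which is not immediate from the pointwise convergence $\mu^{\varepsilon_k}_\tau\to\overline{\mu}_\tau$ alone and must be combined with the countability of the discontinuity set of $\tau\mapsto\overline{\mu}_\tau(\phi)$. Once this is secured, the remainder is a careful translation of the Brakke-flow properties through the product disintegration.
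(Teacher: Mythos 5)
The paper does not prove this lemma at all: it simply asserts the $z$-translation invariance of $\overline{\mu}_t$ and then cites Ilmanen's product lemma \cite[Lemma 8.5]{ilmanen1994elliptic} and \cite[Section 8]{ilmanen1994elliptic} for the details. Your sketch reconstructs exactly that argument, and it is essentially correct. The derivation of $\mu^\varepsilon_{t+\varepsilon s}(\phi)=\mu^\varepsilon_t(\phi\circ\sigma_{-s})$ from $S^\varepsilon(t)=\sigma_{-t/\varepsilon}(S^\varepsilon)$, the deduction that the exceptional set where translation invariance can fail is the (countable) discontinuity set, the disintegration $\overline{\mu}_t=\mu_t\otimes\mathcal L^1$ giving the $\theta$-independence, and the transfer of integrality, perpendicularity, and the Brakke inequality through the product structure with test functions $\theta(z)\phi(x,t)$ are all the right moves and mirror Ilmanen's Section 8. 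Your observation that the $\theta'$-term in $\nabla'\Phi\cdot\overline h$ drops out because $\overline h\cdot\mathbf e_{n+1}=0$ is the key point that makes the Brakke inequality pass to $\{\mu_t\}$.

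One spot deserves a sharper statement than you give it. For the diagonal limit $\mu^{\varepsilon_k}_{t+\varepsilon_k s}(\phi)\to\overline{\mu}_t(\phi)$ at continuity points $t$, countability of the discontinuity set of the \emph{limit} $\tau\mapsto\overline{\mu}_\tau(\phi)$ is not sufficient by itself: one also needs the approximate flows to satisfy the monotonicity $\tau\mapsto\mu^{\varepsilon_k}_\tau(\phi)-C\tau$ decreasing with the \emph{same} constant $C$ for all $k$ (coming from the Brakke inequality for $\mu^{\varepsilon_k}_\tau$ plus the $\varepsilon$-uniform mass bounds of Lemma \ref{Eapriori}). Then, for $\tau_k\to t$, one sandwiches $\mu^{\varepsilon_k}_{\tau_k}(\phi)-C\tau_k$ between $\mu^{\varepsilon_k}_{t+\delta}(\phi)-C(t+\delta)$ and $\mu^{\varepsilon_k}_{t-\delta}(\phi)-C(t-\delta)$, passes to the limit in $k$, and sends $\delta\to 0$ using continuity of the limit at $t$. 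You flag this as the main obstacle and attribute it only to the countability of the discontinuity set; the uniform monotonicity of the approximants is the ingredient that actually closes the gap. With that made explicit, your sketch coincides with the cited argument of Ilmanen.
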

	When applying the compactness theorem of Brakke flow, we take a further subsequence using the compactness of set of finite perimeter: there exists a set of finite perimeter $ E \subset \R^n \times \R^+ $ such that
	\[
	\chi_{ E^{ \varepsilon } } \to \chi_E \text{ in } L^1_{ loc } (\R^n\times\R^+),\ | \nabla^{ \prime } \chi_E | ( \phi ) \leq \liminf_{ \varepsilon \to +0 } | \nabla^{ \prime } \chi_{ E^{ \varepsilon } } | ( \phi ) \text{ for all } \phi \in C^0_c ( \R^n \times \R^+ ; \R^+ ).
	\]
	Since $ S^{ \varepsilon } ( t ) $ is the translation of $ S^{ \varepsilon } $ by $ - t / \varepsilon $ in the $ z $ direction, and $ E^{ \varepsilon } $ is the contraction by $ \varepsilon $ in the $ z $ direction as defined above, one can check the following:
	\begin{equation}
            \label{rescalingS}
	       S^{ \varepsilon } ( t )_z = S^{ \varepsilon }_{ z + t / \varepsilon } = E^{ \varepsilon }_{ t + \varepsilon z }.
	\end{equation}
	
	\section{A generalized BV flow: proof of Theorem \ref{mainresult}}
        \label{Proofofmainresult}
	The key to the proof is to construct an approximate velocity and obtain a suitable $ L^2 $ estimate for a convergence of the velocity.
	
	\subsection{Existence of measure-theoretic velocities}
	In this subsection, even after taking a subsequence, we use the same notation $ \varepsilon $ for simplicity. In establishing \eqref{generalizedBVflow},
 the main tool is the co-area formula applied 
 to $S^\varepsilon$. It is a formula describing the rate of change of the volume of $E_t$, and the obstacle to obtain
 this is the
 possible presence of a portion where $\partial^* S^\varepsilon$ is ``close to being horizontal'', that is, 
 we want to make sure that the domain $ \{ | P_{ T_{ ( x , z ) }( \partial^* S^{ \varepsilon } ) } ( \mathbf{ e }_{ n + 1 } ) | \approx 0 \} $ vanishes when passing to the limit $ \varepsilon \to + 0 $. According to Lemma \ref{EulerLagrange}(4), if $ | P_{ T_{ ( x , z ) } ( \partial^* S^{ \varepsilon } ) } ( \mathbf{ e }_{ n + 1 } ) | \approx 0 $, we have $ | h^\varepsilon |^2 \approx 1 / \varepsilon^2 $. Using this fact and the $ L^2 $ boundedness of $ h^\varepsilon $ (Lemma \ref{zinibdd}), we can prove that such a domain is vanishing as $ \varepsilon \to +0 $. In particular, since $ S^{\varepsilon} $ approaches a cylindrical shape (extended in the time direction), we can even show that $ | P_{T_{(x,z)}(\partial^* S^{\varepsilon})} (\mathbf{e}_{n+1}) |^2 \approx 1 $.
	\begin{lemma}
		For $ S^{ \varepsilon } $ of Section \ref{translative}, we define
		\[
		\Sigma^{ \varepsilon , k } := \left\{ ( x , z ) \in \partial^* S^{ \varepsilon }\ \middle|\ | P_{T_{(x,z)}(\partial^* S^{\varepsilon})} (\mathbf{e}_{n+1}) |^2 \leq 1-\frac{1}{k} \right\}.
		\]
		Then if $ 1 < k \leq \varepsilon^{ - 1/2 } $, we have $ \lim_{ \varepsilon \to +0 } | \nabla^{ \prime } \chi_{ S^{ \varepsilon } } | ( \Sigma^{ \varepsilon , k } ) = 0 $.
		\label{discrilemma}
	\end{lemma}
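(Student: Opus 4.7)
The plan is to combine the pointwise identity from Lemma \ref{EulerLagrange}(4) with the integrated $L^2$ curvature bound from Lemma \ref{zinibdd}; no compactness or limiting argument is needed beyond a single direct estimate.

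First, I would extract a lower bound for $|h^\varepsilon|$ on $\Sigma^{\varepsilon,k}$. By Lemma \ref{EulerLagrange}(4), for $|\nabla'\chi_{S^\varepsilon}|$-a.e.\ $(x,z)$ we have $\varepsilon^2|h^\varepsilon|^2 = 1 - |P_{T_{(x,z)}(\partial^* S^\varepsilon)}(\mathbf{e}_{n+1})|^2$. By definition of $\Sigma^{\varepsilon,k}$, this means $\varepsilon^2|h^\varepsilon|^2 \geq 1/k$, i.e.\
\[
\varepsilon|h^\varepsilon|^2 \geq \frac{1}{k\varepsilon} \qquad |\nabla'\chi_{S^\varepsilon}|\text{-a.e.\ on }\Sigma^{\varepsilon,k}.
\]

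Next, I would integrate this inequality against $|\nabla'\chi_{S^\varepsilon}| = \mathcal{H}^n\llcorner_{\partial^* S^\varepsilon}$ and invoke the global bound \eqref{initialbdd} from Lemma \ref{zinibdd}. This yields
\[
\frac{1}{k\varepsilon}\,|\nabla'\chi_{S^\varepsilon}|(\Sigma^{\varepsilon,k}) \leq \int_{\Sigma^{\varepsilon,k}} \varepsilon|h^\varepsilon|^2\,d\mathcal{H}^n \leq \int_{\partial^* S^\varepsilon \cap (\R^n\times(0,\infty))} \varepsilon|h^\varepsilon|^2\,d\mathcal{H}^n \leq \mathcal{H}^{n-1}(\partial^* E_0),
\]
hence $|\nabla'\chi_{S^\varepsilon}|(\Sigma^{\varepsilon,k}) \leq k\varepsilon\,\mathcal{H}^{n-1}(\partial^* E_0)$. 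Under the assumption $k \leq \varepsilon^{-1/2}$, the right-hand side is bounded by $\sqrt{\varepsilon}\,\mathcal{H}^{n-1}(\partial^* E_0)$, which tends to $0$ as $\varepsilon \to +0$.

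There is essentially no obstacle here: the argument is a one-line consequence of the Euler--Lagrange identity together with the a priori $L^2$-estimate. The only mild point to mention is that Lemma \ref{EulerLagrange} is stated on the regular part of $\partial^* S^\varepsilon$ (the singular set has codimension $7$ and is $\mathcal{H}^n$-null), so the pointwise identity holds $|\nabla'\chi_{S^\varepsilon}|$-almost everywhere and the integration is justified. The coupling $k \leq \varepsilon^{-1/2}$ is precisely the threshold at which $k\varepsilon \to 0$, so the hypothesis is sharp for this line of argument. This lemma will then be used in the sequel to discard the ``nearly horizontal'' part of $\partial^* S^\varepsilon$ when passing to the co-area formula and identifying the velocity of $\partial^* E_t$ with the generalized mean curvature of $\mu_t$.
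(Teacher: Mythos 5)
Your argument is correct and is essentially the paper's own proof: both reduce the statement to the Chebyshev/Markov estimate $|\nabla'\chi_{S^\varepsilon}|(\Sigma^{\varepsilon,k})\leq k\varepsilon\,\mathcal{H}^{n-1}(\partial^*E_0)\leq \varepsilon^{1/2}\,\mathcal{H}^{n-1}(\partial^*E_0)$ using the a priori $L^2$ curvature bound from Lemma \ref{zinibdd}. The only cosmetic difference is that you invoke Lemma \ref{EulerLagrange}(4) directly, while the paper re-derives that same identity from Lemma \ref{EulerLagrange}(1) together with the orthogonal decomposition of $\mathbf{e}_{n+1}$ — these are mathematically identical.
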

	\begin{proof}
        Let $ 1 < k \leq \varepsilon^{-1/2} $ be fixed. From Lemma \ref{EulerLagrange}(1) and $ 1 = | P_{T_{(x,z)}(\partial^* S^{\varepsilon})}(\mathbf{e}_{n+1}) |^2 + | P_{T_{(x,z)}(\partial^* S^{\varepsilon})^{\perp}}(\mathbf{e}_{n+1}) |^2 $, we have
        \[
            | \nabla^{\prime} \chi_{S^{\varepsilon}} | (\Sigma^{\varepsilon,k}) = | \nabla^{\prime} \chi_{S^{\varepsilon}} | \left( \left\{ (x,z) \in \partial^* S^{\varepsilon}\ \middle|\ \frac{1}{k\varepsilon^2} \leq |h^\varepsilon|^2 \right\} \right).
        \]
		Thus, by using Markov's inequality and the $ L^2 $ boundedness of $ h^\varepsilon $ (\ref{zinibdd}), we compute
		\begin{equation}
			| \nabla^{ \prime } \chi_{ S^{ \varepsilon } } | ( \Sigma^{ \varepsilon , k } )
                = | \nabla^{ \prime } \chi_{ S^{ \varepsilon } } | \left( \left\{ 1 \leq k \varepsilon^2 |h^\varepsilon|^2 \right\} \right)
				\leq k \varepsilon \int_{ \R^n \times \R } \varepsilon | h^\varepsilon |^2\ d | \nabla^{ \prime } \chi_{ S^{ \varepsilon } } |
				\leq \varepsilon^{ 1/2 }\,\mathcal{ H }^{ n - 1 } ( \partial^* E_0 ).
		\end{equation}
        By taking $ \varepsilon \to + 0 $, we obtain $ \lim_{ \varepsilon \to + 0 } | \nabla^{ \prime } \chi_{ S^{ \varepsilon } } | ( \Sigma^{ \varepsilon , k } ) = 0 $.
	\end{proof}
	The following two lemmas relate $ \mu_t $ and $ E_t $ so that these fit in the framework of generalized BV flow. 
	\begin{lemma}
    \label{EconvergenstoMlemma}
		Taking a further subsequence if necessary, 
  we have $ d | \nabla \chi_{ E^{ \varepsilon }_t } | d t \rightharpoonup d \mu_t d t $ as Radon measure.
	\end{lemma}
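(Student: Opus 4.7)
The plan is to extract a weakly convergent subsequence of $d|\nabla\chi_{E^\varepsilon_t}|dt$ by compactness and then identify the limit with $d\mu_t dt$ by relating both sides to $|\nabla'\chi_{S^\varepsilon}|$ and exploiting the $\theta$-averaging definition of $\mu_t$. For compactness, BV slicing gives $\int\int\phi\,d|\nabla\chi_{E^\varepsilon_t}|dt=\int\phi\,d|\nabla\chi_{E^\varepsilon}|$, where $|\nabla\chi_{E^\varepsilon}|$ denotes the total variation of the spatial part of $\nabla'\chi_{E^\varepsilon}$ viewed as a Radon measure on $\R^n\times\R^+$. Since $|\nabla\chi_{E^\varepsilon}|\le|\nabla'\chi_{E^\varepsilon}|$ has uniformly bounded mass on every $K\times[0,T]$ by \eqref{themassbddE}, Banach--Alaoglu supplies a subsequence with $d|\nabla\chi_{E^\varepsilon_t}|dt\rightharpoonup\nu$ for some Radon measure $\nu$.

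To identify $\nu(\phi)$ for $\phi\in C^0_c(\R^n\times\R^+)$, I would apply the area formula to $\kappa_\varepsilon:(x,z)\mapsto(x,\varepsilon z)$ restricted to $\partial^*S^\varepsilon$. The $z$-contraction Jacobian contributes a factor $\varepsilon$, and the passage from the full to the spatial total variation on $\partial^*E^\varepsilon$ contributes a factor $|P_T(\mathbf{e}_{n+1})|$, giving
\[
\int\phi\,d|\nabla\chi_{E^\varepsilon}|=\varepsilon\int_{\partial^*S^\varepsilon}\phi(x,\varepsilon z)\,|P_{T_{(x,z)}(\partial^*S^\varepsilon)}(\mathbf{e}_{n+1})|\,d|\nabla'\chi_{S^\varepsilon}|(x,z).
\]
Choosing $k=\varepsilon^{-1/2}$ in Lemma \ref{discrilemma} yields $|P_T(\mathbf{e}_{n+1})|=1+o(1)$ outside $\Sigma^{\varepsilon,k}$, while $\varepsilon|\nabla'\chi_{S^\varepsilon}|(\Sigma^{\varepsilon,k})\to0$; combined with the mass bound $\varepsilon|\nabla'\chi_{S^\varepsilon}|(K\times[0,T/\varepsilon])=O(1)$ from \eqref{themassbddS}, this gives
\[
\nu(\phi)=\lim_{\varepsilon\to0}\varepsilon\int\phi(x,\varepsilon z)\,d|\nabla'\chi_{S^\varepsilon}|(x,z).
\]

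To identify $\int\int\phi\,d\mu_t\,dt$, fix $\theta\in C^2_c((0,\infty);\R^+)$ with $\int\theta=1$. Using $\mu^\varepsilon_t\rightharpoonup\overline{\mu}_t$, the definition $\mu_t(\psi)=\overline{\mu}_t(\theta\,\psi)$, and $\mu^\varepsilon_t=(\sigma_{-t/\varepsilon})_*|\nabla'\chi_{S^\varepsilon}|$, the change of variable $s=z-t/\varepsilon$ together with Fubini yields
\[
\int\int\phi\,d\mu_t\,dt=\lim_{\varepsilon\to0}\varepsilon\int\theta(s)\int\phi(x,\varepsilon(z-s))\,d|\nabla'\chi_{S^\varepsilon}|(x,z)\,ds.
\]
Uniform continuity of $\phi$ and the same mass bound let me replace $\phi(x,\varepsilon(z-s))$ by $\phi(x,\varepsilon z)$ up to $o(1)$ error; since $\int\theta=1$, the two limits agree and $\nu=d\mu_t\,dt$.

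The main obstacle is controlling the two competing scales: $|\nabla'\chi_{S^\varepsilon}|$-mass grows like $1/\varepsilon$ in the $z$-direction over regions fixed in $(x,t)$-coordinates, while the approximation errors are only $o(1)$. The factor $\varepsilon$ from the contraction Jacobian exactly compensates this growth, and Lemma \ref{discrilemma} is precisely what ensures that the ``nearly horizontal'' portion of $\partial^*S^\varepsilon$, where this compensation would otherwise fail, is negligible after the rescaling; without it $\varepsilon|\nabla'\chi_{S^\varepsilon}|$ and $|\nabla\chi_{E^\varepsilon}|$ would not be asymptotically equal and the identification would fail.
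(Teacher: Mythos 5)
Your proof is correct and takes essentially the same approach as the paper: both rest on the mass bound \eqref{themassbddE} for compactness, Lemma \ref{discrilemma} with $k=\varepsilon^{-1/2}$ to discard the nearly-horizontal part of $\partial^*S^\varepsilon$, the coarea formula with the rescaling \eqref{rescalingS}, and an estimate that the $\varepsilon z$-shift in the time argument of $\phi$ is a vanishing error (which is exactly what the paper's display \eqref{translationmeasureconvergence} establishes). You have merely reorganized the bookkeeping by pulling everything back to a single integral against $|\nabla'\chi_{S^\varepsilon}|$ rather than splitting $\int\!\int\phi\,d\mu_t\,dt$ into the paper's $A_\varepsilon+B_\varepsilon$, but the substance is the same.
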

	\begin{proof}
		By (\ref{themassbddE}) and the co-area formula, we obtain
		\[
		\int_{ t_1 }^{ t_2 } \int_{ \R^n } d | \nabla \chi_{ E^{ \varepsilon }_t } | d t \leq | \nabla^{ \prime } \chi_{ E^{ \varepsilon } } | ( \R^n \times ( t_1 , t_2 ) ) \leq \left( ( t_2 - t_1 ) + \varepsilon^2 + ((t_2-t_1)+\varepsilon^2)^{\frac12}\right)\mathcal{ H }^{ n - 1 } ( \partial^* E_0 )
		\]
		for all $ 0 \leq t_1 < t_2 < \infty $. Thus, by the compactness theorem of Radon measure, $ d|\nabla \chi_{E^{\varepsilon}_t} | dt $ converges to some Radon measure.
		\par
		Next, we fix $ \phi \in C^0_c ( \R^n \times \R^+ ) $ and $ z \geq 0 $. Parallel translating with respect to time, for sufficiently small $ \varepsilon > 0 $, we have
		\begin{equation}
			\begin{split}
				\Biggl| \int_0^{ \infty } \int_{ \R^n } \phi\ d | \nabla \chi_{ E^{ \varepsilon }_t } | d t &- \int_0^{ \infty } \int_{ \R^n } \phi\ d | \nabla \chi_{ E^{ \varepsilon }_{ t + \varepsilon z } } | d t\ \Biggr|\\
				&\leq \int_0^{ \infty } \int_{ \R^n } | \phi ( x , t - \varepsilon z ) - \phi ( x , t ) |\ d | \nabla \chi_{ E^{ \varepsilon }_t } | d t\\
				&\leq \sup_{ ( x , t ) \in \R^n \times \R^+ } | \phi ( x , t - \varepsilon z ) - \phi ( x , t ) |\,| \nabla^{ \prime } \chi_{ E^{ \varepsilon } } | ( K ),
			\end{split}
		\end{equation}
		where $ K $ is a sufficiently large compact set for $ \phi $ and we used the co-area formula. Therefore, by letting $ \varepsilon \to +0 $ in the above, we can deduce
        \begin{equation}
            \label{translationmeasureconvergence}
            \lim_{ \varepsilon \to +0 } \int_0^{ \infty } \int_{ \R^n } \phi\ d | \nabla \chi_{ E^{ \varepsilon }_t } | d t = \lim_{ \varepsilon \to +0 } \int_0^{ \infty } \int_{ \R^n } \phi\ d | \nabla \chi_{ E^{ \varepsilon }_{ t + \varepsilon z } } | d t.
        \end{equation}
		\par
		Finally, we prove $ d | \nabla \chi_{ E^{ \varepsilon }_t } | d t \rightharpoonup d \mu_t d t $ as Radon measure. Let $ \phi \in C^0_c ( \R^n \times \R^+ ) $, and let $ \theta \in C^2_c ( \R ; \R^+ ) $ with $ \int_{ \R } \theta ( z )\,d z = 1 $ and $ \mathrm{ supp }\,\theta \subset ( 0 , \infty ) $ be arbitrary. To use the co-area formula for $ \partial^* S^{\varepsilon} (t) $, we translate $ \int_0^{ \infty } \int_{ \R^n } \phi\ d \mu_t d t $ as
		\begin{equation}
			\begin{split}
				&\int_0^{ \infty } \int_{ \R^n } \phi\ d  \mu_t d t
				= \lim_{ \varepsilon \to +0 } \int_0^{ \infty } \int_{ \R^{n+1} } \theta ( z )\,\phi ( x , t )\ d \mu_t^{ \varepsilon } ( x , z ) d t\\
				&= \lim_{ \varepsilon \to +0 } \Biggl\{\int_0^{ \infty }  \int_{ \R^{n+1} } \theta ( z )\,\phi ( x , t )\,( 1 - | P_{ T_{ ( x , z ) } ( \partial^* S^{ \varepsilon } ( t ) ) } ( \mathbf{ e }_{ n + 1 } ) | )\ d | \nabla^{ \prime } \chi_{ S^{ \varepsilon } ( t ) } | ( x , z ) d t\\
				&\quad \quad \quad \quad + \int_0^{ \infty }  \int_{ \R^{n+1} } \theta ( z )\,\phi ( x , t )\,| P_{ T_{ ( x , z ) } ( \partial^* S^{ \varepsilon } ( t ) ) } ( \mathbf{ e }_{ n + 1 } ) |\ d | \nabla^{ \prime } \chi_{ S^{ \varepsilon } ( t ) } | ( x , z ) d t\Biggr\}.
			\end{split}
			\label{A+B}
		\end{equation}
		Since $ | P_{T_{(x,z)}(\partial^* S^{\varepsilon})} (\mathbf{e}_{n+1}) |^2 \approx 1 $ from Lemma \ref{discrilemma}, setting $ A_{ \varepsilon } $ and $ B_{ \varepsilon } $ by
		\begin{align*}
			A_{ \varepsilon } &:= \int_0^{ \infty }  \int_{ \R^{n+1} } \theta ( z )\,\phi ( x , t )\,( 1 - | P_{ T_{ ( x , z ) } ( \partial^* S^{ \varepsilon } ( t ) ) } ( \mathbf{ e }_{ n + 1 } ) | )\ d | \nabla^{ \prime } \chi_{ S^{ \varepsilon } ( t ) } | ( x , z ) d t,\\
			B_{ \varepsilon } &:= \int_0^{ \infty } \int_{ \R^{n+1} } \theta ( z )\,\phi ( x , t )\,| P_{ T_{ ( x , z ) } ( \partial^* S^{ \varepsilon } ( t ) ) } ( \mathbf{ e }_{ n + 1 } ) |\ d | \nabla^{ \prime } \chi_{ S^{ \varepsilon } ( t ) } | ( x , z ) d t,
		\end{align*}
        we can predict $ A_{\varepsilon } \to 0 $ and $ B_{\varepsilon} \approx \int_0^{ \infty } \int_{\R^n} \phi\,d|\nabla \chi_{E^{\varepsilon}_t} | dt $.
        \par
		For $ B_{ \varepsilon } $, by the  co-area formula and (\ref{rescalingS}), we obtain
		\begin{equation}
			\begin{split}
				\lim_{ \varepsilon \to +0 } B_{ \varepsilon }
				&= \lim_{ \varepsilon \to +0 } \int_0^{ \infty } \int_{ \R^{n+1} } \theta\,\phi\ d | \nabla \chi_{ S^{ \varepsilon }_{ t/\varepsilon + z } } | d z d t\\
				&= \lim_{ \varepsilon \to +0 } \int_0^{ \infty } \int_{ \R^{n+1} } \theta\,\phi\ d | \nabla \chi_{ E^{ \varepsilon }_{ t + \varepsilon z } } | d z d t\\
				&= \lim_{ \varepsilon \to +0 } \int_0^{ \infty } \int_{ \R^n } \phi\ d | \nabla \chi_{ E^{ \varepsilon }_t } | d t.
			\end{split}
			\label{B}
		\end{equation}
        Here, we also used Fubini's theorem
        to change the order of integration with respect to $z$ and $t$, (\ref{translationmeasureconvergence}) and $ \int_{\R} \theta ( z )\ dz = 1 $ for the second line to the third line.
        \par
		Now we consider $ A_{ \varepsilon } $. For $ \Sigma^{ \varepsilon , k } $ of Lemma \ref{discrilemma}, $ 1 - | P_{ T_{ ( x , z ) } ( \partial^* S^{ \varepsilon } ) } ( \mathbf{ e }_{ n + 1 } ) |^2 < 1/k $ is satisfied for all $ ( x , z ) \in \partial^* S^{ \varepsilon } \backslash \Sigma^{ \varepsilon , k } $.
        Hence, by using the co-area formula, the mass boundedness of $ \partial^* S_z $ (\ref{initialbdd}) and 
        $ \int_{\R} \theta ( z )\,dz = 1 $, we calculate as
        \begin{equation*}
            \begin{split}
                &\int_0^{ \infty } \int_{ \R^{n+1} } | \theta\,\phi |\,\chi_{ \sigma_{ - t / \varepsilon }\,( \partial^* S^{ \varepsilon } \backslash \Sigma^{ \varepsilon , k } ) }\,( 1 - | P_{ T_{ ( x , z ) } ( \partial^* S^{ \varepsilon } ( t ) ) } ( \mathbf{ e }_{ n + 1 } ) |^2 )\ d | \nabla^{ \prime } \chi_{ S^{ \varepsilon } ( t ) } | d t\\
                &\leq \int_0^{ \infty } \int_{ \R^{n+1} } | \theta\,\phi |\,\chi_{ \sigma_{ - t / \varepsilon }\,( \partial^* S^{ \varepsilon } \backslash \Sigma^{ \varepsilon , k } ) }\,\frac{1}{k}\,\frac{| P_{ T_{ ( x , z ) } ( \partial^* S^{ \varepsilon } ( t ) ) } ( \mathbf{ e }_{ n + 1 } ) |^2}{1-\frac{1}{k}}\ d | \nabla^{ \prime } \chi_{ S^{ \varepsilon } ( t ) } | d t\\
                &\leq \frac{ \mathcal{L}^1(K)\,\sup | \phi | }{ k - 1 } \int_{\R} \int_{\R^n} \theta\,| P_{ T_{ ( x , z ) } ( \partial^* S^{ \varepsilon } ) } ( \mathbf{ e }_{ n + 1 } ) |\ d | \nabla \chi_{S^{\varepsilon}_z} | dz\\
                &\leq \frac{ \mathcal{L}^1(K)\,\sup | \phi | }{ k - 1 }\,\mathcal{H}^{n-1} (\partial^* E_0),
            \end{split}
        \end{equation*}
        where $ K $ is a sufficiently large bounded interval for $ \phi $. Therefore, by using Lemma \ref{discrilemma} for $ k = \varepsilon^{ - 1 / 2 } $, we obtain
		\begin{equation}
			\begin{split}
				| A_{ \varepsilon } |
				&\leq \int_0^{ \infty } \int_{ \R^{n+1} } |\theta\,\phi|\,(\chi_{ \sigma_{ - t / \varepsilon } ( \Sigma^{ \varepsilon , \varepsilon^{-1/2} } ) } + \chi_{ \sigma_{ - t / \varepsilon } ( \partial^* S^{ \varepsilon } \backslash \Sigma^{ \varepsilon , \varepsilon^{-1/2} } ) } )\\
                &\hspace{60pt} \times( 1 - | P_{ T_{ ( x , z ) } ( \partial^* S^{ \varepsilon } ( t ) ) } ( \mathbf{ e }_{ n + 1 } ) |^2 )\ d | \nabla^{ \prime } \chi_{ S^{ \varepsilon } ( t ) } | d t\\
				&\leq \int_0^{ \infty } \int_{ \R^{n+1} } | \theta\,\phi |\,\chi_{ \sigma_{ - t / \varepsilon } ( \Sigma^{ \varepsilon , \varepsilon^{-1/2} } ) }\ d | \nabla^{ \prime } \chi_{ S^{ \varepsilon } ( t ) } | d t\\
                &\quad + \int_0^{ \infty } \int_{ \R^{n+1} } | \theta\,\phi |\,\chi_{ \sigma_{ - t / \varepsilon }\,( \partial^* S^{ \varepsilon } \backslash \Sigma^{ \varepsilon , \varepsilon^{-1/2} } ) }\,( 1 - | P_{ T_{ ( x , z ) } ( \partial^* S^{ \varepsilon } ( t ) ) } ( \mathbf{ e }_{ n + 1 } ) |^2 )\ d | \nabla^{ \prime } \chi_{ S^{ \varepsilon } ( t ) } | d t\\
				&\leq C ( \theta , \phi , E_0 ) \left( \int_{ \R^{n+1} } \chi_{ \Sigma^{ \varepsilon , \varepsilon^{-1/2} } }\ d | \nabla^{ \prime } \chi_{ S^{ \varepsilon } } | + \frac{1}{\varepsilon^{-1/2}-1} \right)\\
				&\leq C ( \theta , \phi , E_0 )\,( | \nabla^{ \prime } \chi_{ S^{ \varepsilon } } | ( \Sigma^{ \varepsilon , \varepsilon^{ - 1/2 } } ) + \varepsilon^{ 1/2 } ),
			\end{split}
			\label{A}
		\end{equation}
		where $ C ( \theta , \phi , E_0 ) $ is a constant that depends only on $ \theta $, $ \phi $ and the initial value $ E_0 $. By Lemma \ref{discrilemma}, we obtain $ \lim_{ \varepsilon \to +0 } A_{ \varepsilon } = 0 $. Thanks to (\ref{A+B})-(\ref{A}), we have $ d | \nabla \chi_{ E^{ \varepsilon }_t } | d t \rightharpoonup d \mu_t d t $ as Radon measure.
	\end{proof}
	\begin{lemma}
		Taking a further subsequence if necessary, we have $ | \nabla \chi_{ E_t } | ( \phi ) \leq \mu_t ( \phi )$ for all $ \phi \in C_c^0 ( \R^n ; \R^+ ) $ and for a.e. $ t \geq 0 $.
		\label{E<<mu}
	\end{lemma}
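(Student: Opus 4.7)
The plan is to transfer the slicewise lower semicontinuity of the BV seminorm through the space-time weak convergence already established in Lemma \ref{EconvergenstoMlemma}, and then disintegrate in the time variable.

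First, recall that $\chi_{E^\varepsilon}\to\chi_E$ in $L^1_{loc}(\R^n\times\R^+)$ by the construction in Section \ref{EllipticRegularization}. For any bounded set $K\subset\R^n$, Fubini gives
\[
\int_0^\infty \|\chi_{E^\varepsilon_t}-\chi_{E_t}\|_{L^1(K)}\,dt\ \longrightarrow\ 0,
\]
so by a diagonal argument over a countable exhaustion of $\R^n$ by balls, we may extract a further subsequence (not relabeled) so that $\chi_{E^\varepsilon_t}\to\chi_{E_t}$ in $L^1_{loc}(\R^n)$ for a.e.\ $t\geq 0$. For such $t$, standard lower semicontinuity of the BV norm under $L^1_{loc}$ convergence, combined with the layer-cake representation $\int\phi\,d|\nabla\chi_{E_t}|=\int_0^\infty |\nabla\chi_{E_t}|(\{\phi>s\})\,ds$ for $\phi\in C^0_c(\R^n;\R^+)$ and Fatou on the superlevel sets (which are open), yields
\[
|\nabla\chi_{E_t}|(\phi)\ \leq\ \liminf_{\varepsilon\to +0}|\nabla\chi_{E^\varepsilon_t}|(\phi).
\]

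Next, pick an arbitrary $\psi\in C^0_c(\R^+;\R^+)$ and test the previous inequality against $\psi(t)$. By Fatou's lemma and Lemma \ref{EconvergenstoMlemma} applied to the product test function $\phi(x)\psi(t)\in C^0_c(\R^n\times\R^+;\R^+)$, we obtain
\[
\int_0^\infty \psi(t)\,|\nabla\chi_{E_t}|(\phi)\,dt\ \leq\ \liminf_{\varepsilon\to+0}\int_0^\infty\!\!\int_{\R^n} \phi(x)\psi(t)\,d|\nabla\chi_{E^\varepsilon_t}|\,dt\ =\ \int_0^\infty \psi(t)\,\mu_t(\phi)\,dt.
\]
Since $\psi\geq 0$ is arbitrary, we conclude $|\nabla\chi_{E_t}|(\phi)\leq \mu_t(\phi)$ for a.e.\ $t\geq 0$.

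The one subtlety to handle is that the exceptional null set of $t$ a priori depends on the choice of $\phi$. To remove this dependence, I would fix a countable family $\{\phi_k\}_{k\in\N}\subset C^0_c(\R^n;\R^+)$ that is dense (in the $C^0$-norm together with a countable exhaustion of supports) in $C^0_c(\R^n;\R^+)$, apply the previous step to each $\phi_k$, and take the union of the corresponding null sets. For $t$ outside this single null set, $|\nabla\chi_{E_t}|(\phi_k)\leq \mu_t(\phi_k)$ for every $k$, and since both sides are continuous positive linear functionals on $C^0_c(\R^n;\R^+)$ with respect to uniform convergence on compact supports, the inequality extends to all $\phi\in C^0_c(\R^n;\R^+)$, completing the proof. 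The main obstacle in the argument is the passage from the space-time weak convergence to a slicewise inequality, which is why the lower semicontinuity for a.e.\ slice together with the Fatou step is essential; the dependence of the null set on the test function is then dispensed with by separability.
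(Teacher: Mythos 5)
Your proof is correct, and it takes a genuinely different route from the paper's. The paper fixes $t$ and works directly with the elliptic-regularization structure: it writes $\mu_t(\phi)=\overline{\mu}_t(\theta\phi)$, discards the projection factor $|P_{T(\partial^* S^\varepsilon(t))}(\mathbf{e}_{n+1})|\le 1$, applies the co-area formula to slice $S^\varepsilon(t)$ in $z$, and then uses the rescaling identity \eqref{rescalingS} together with the $z$-translated slicewise lower semicontinuity ($\chi_{E^\varepsilon_{t+\varepsilon z}}\to\chi_{E_t}$) and Fatou in the $z$-variable. This gives the inequality directly at each fixed $t$ for all but a null set of $t$. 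Your approach instead treats Lemma~\ref{EconvergenstoMlemma} as a black box and does a disintegration in time: slicewise $L^1_{loc}$ convergence gives the pointwise lower semicontinuity $|\nabla\chi_{E_t}|(\phi)\le\liminf_\varepsilon|\nabla\chi_{E^\varepsilon_t}|(\phi)$ for a.e.\ $t$, and then Fatou in $t$ plus the space-time weak convergence $d|\nabla\chi_{E^\varepsilon_t}|dt\rightharpoonup d\mu_t dt$ yields $\int\psi\,|\nabla\chi_{E_t}|(\phi)\,dt\le\int\psi\,\mu_t(\phi)\,dt$, from which the a.e.\ inequality follows. What you gain is modularity and a cleaner separation of concerns (no need to re-invoke the product lemma or the soliton geometry); what the paper gains is a proof that does not pass through the disintegration-in-time step and therefore avoids the (admittedly routine) measurability considerations for $t\mapsto|\nabla\chi_{E_t}|(\phi)$, which your argument leaves implicit. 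Your handling of the null-set dependence on $\phi$ via a countable dense family is the right fix and is not spelled out in the paper either, though both arguments need it. One small stylistic remark: the layer-cake/superlevel-set justification you give for lower semicontinuity of $\phi\mapsto|\nabla\chi_{E_t}|(\phi)$ is more convoluted than necessary; the characterization $|\nabla u|(\phi)=\sup\{\int u\,\mathrm{div}\,X\ dx : X\in C^1_c,\ |X|\le\phi\}$ gives it directly as a supremum of $L^1_{loc}$-continuous linear functionals.
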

	\begin{proof}
		From Section \ref{translative}, $ \chi_{ E^{ \varepsilon } } \to \chi_E $ in $ L^1_{ loc } ( \R^n \times \R^+ ) $. Let $ z > 0 $ be arbitrary. Since the parallel translation is continuous in $ L^1 $, we have $ \chi_{ E^{ \varepsilon } + \varepsilon z \mathbf{e}_{n+1} } \to \chi_E $ in $ L_{loc}^1 ( \R^n \times \R^+ ) $. Taking further subsequence from $ \{ E^{ \varepsilon } + \varepsilon z \mathbf{e}_{n+1} \} $ if necessary, $ \chi_{ E^{ \varepsilon } } ( x , t + \varepsilon z ) $ converges to $ \chi_{ E }( x , t ) $ as $ \varepsilon \to + 0 $ for $ \mathcal{ L }^{ n + 1 } $-a.e.\,$ ( x , t ) \in \R^n \times \R^+ $, and by Fubini, we have $ \chi_{ E^{ \varepsilon }_{ t + \varepsilon z } } \to \chi_{ E_t } $ in $ L_{ loc }^1 ( \R^n ) $ for a.e.\, $ t \geq 0 $. Thus we obtain $ | \nabla \chi_{ E_t } | ( \phi ) \leq \liminf_{ \varepsilon \to + 0 } | \nabla \chi_{ E^{ \varepsilon }_{ t + \varepsilon z } } | ( \phi ) $ for all $ \phi \in C_c^0 ( \R^n ; \R^+ ) $ and a.e.\,$ t \geq 0 $ by the lower semi-continuity of variation measure. Let $ \phi \in C^0_c ( \R^n ; \R^+ ) $ and $ \theta \in C^2_c ( \R ; \R^+ ) $ with $ \int_{ \R } \theta ( z )\,d z = 1 ,\ \mathrm{ supp }\, \theta \subset ( 0 , \infty ) $ be arbitrary. Then we obtain
		\begin{equation}
			\begin{split}
				\mu_t ( \phi )
                &= \overline{ \mu }_t ( \theta\,\phi )
				= \lim_{ \varepsilon \to +0 }\int_{ \R^{n+1}  } \theta\,\phi\ d | \nabla^{ \prime } \chi_{ S^{ \varepsilon } ( t ) } |\\
                &\geq \liminf_{ \varepsilon \to +0 } \int_{ \R^{n+1} } \theta\,\phi\,| P_{ T_{ ( x , z ) } ( \partial^* S ( t ) ) } ( \mathbf{e}_{ n + 1 } ) |\ d | \nabla^{ \prime } \chi_{ S^{ \varepsilon } ( t ) } |
				= \liminf_{ \varepsilon \to +0 } \int_{ \R } \theta \int_{ \R^n } \phi\ d | \nabla \chi_{ S^{ \varepsilon } ( t )_z } | d z\\
				&= \liminf_{ \varepsilon \to +0 } \int_{ \R } \theta \int_{ \R^n } \phi\ d | \nabla \chi_{ S^{ \varepsilon }_{ z + t/\varepsilon } } | d z
				= \liminf_{ \varepsilon \to +0 } \int_{ \R } \theta \int_{ \R^n } \phi\ d | \nabla \chi_{ E^{ \varepsilon }_{ t + \varepsilon z } } | d z\\
				&\geq \int_{\R} \theta\,\liminf_{ \varepsilon \to +0 } | \nabla \chi_{ E^{ \varepsilon }_{ t + \varepsilon z } } | ( \phi )\ d z
				\geq | \nabla \chi_{ E_t } | ( \phi )
			\end{split}
		\end{equation}
        where we used the co-area formula, Fatou's Lemma, the lower semi-continuity of $ | \nabla \chi_{ E_t } | $, $ \int_{ \R } \theta ( z )\,dz = 1 $ and (\ref{rescalingS}).
	\end{proof}
	Finally, we construct an approximate velocity to show that $  d| \nabla^{\prime} \chi_{ E } | \ll d \mu_t d t $. If the co-area formula is available, we obtain
	\begin{equation}
    \label{classicalcoarea}
	\int_E \partial_t \phi\ dx dt = \int_{ \partial^* E } \phi\,\mathbf{ q } ( \nu_E )\ d \mathcal{ H }^n = \int_{ 0 }^{ \infty } \int_{ \partial^* E_t } \phi\,\frac{ \mathbf{ q } ( \nu_E ) }{ | \mathbf{ p } ( \nu_E ) | }\,d \mathcal{ H }^{ n - 1 } d t
	\end{equation}
	for all $ \phi \in C^1_c ( \R^n \times ( 0 , \infty ) ) $. Since we have that the domain $ \{ | \mathbf{ p } ( \nu_{ S^{ \varepsilon } } ) | \approx 0 \} $ where the co-area formula is not applicable goes to measure 0 from Lemma \ref{discrilemma} and $ | \mathbf{p}(\nu_{S^{\varepsilon}}) | = | P_{T_{(x,z)}(\partial^* S)} (\mathbf{e}_{n+1}) | $, we may construct the approximate velocity based on
 (\ref{classicalcoarea}).
	\begin{prop}
		Taking a further subsequence if necessary, there exists $ V \in L^2 ( d \mu_t d t ) $ such that
		\begin{equation}\label{vel2}
			\int_E \partial_t \phi\ d x d t = - \int_0^{\infty} \int_{ \R^n } \phi\,V\ d  \mu_t d t,
		\end{equation}
		for all $ \phi \in C^1_c ( \R^n \times ( 0 , \infty ) ) $.
		\label{velocity}
	\end{prop}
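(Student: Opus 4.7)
The plan is to construct an approximate velocity $V^\varepsilon$ for the moving phase $\{E^\varepsilon_t\}_{t>0}$, establish a uniform $L^2$ bound against $d|\nabla\chi_{E^\varepsilon_t}|\,dt$, and extract a weak limit $V\in L^2(d\mu_t\,dt)$ by Hutchinson's compactness theorem for measure--function pairs \cite{hutchinson1986second}. The starting point is the identity
\[
\int_{E^\varepsilon}\partial_t\phi\,dx\,dt=\int_{\partial^* E^\varepsilon}\phi\,\mathbf{q}(\nu_{E^\varepsilon})\,d\mathcal{H}^n=\int_0^{\infty}\!\!\int_{\R^n}\phi\,\frac{\mathbf{q}(\nu_{E^\varepsilon})}{|\mathbf{p}(\nu_{E^\varepsilon})|}\,d|\nabla\chi_{E^\varepsilon_t}|\,dt,
\]
which follows from the divergence theorem applied to the vector field $(0,\ldots,0,\phi)$ together with the classical co-area formula on the (almost everywhere smooth) hypersurface $\partial^* E^\varepsilon$, valid wherever $|\mathbf{p}(\nu_{E^\varepsilon})|>0$. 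Since the integrand is singular where $\partial^* E^\varepsilon$ is nearly horizontal in time, I would fix any constant $c\in(0,1)$ and define $V^\varepsilon$ to be $-\mathbf{q}(\nu_{E^\varepsilon})/|\mathbf{p}(\nu_{E^\varepsilon})|$ on $\kappa_\varepsilon(\{|\mathbf{p}(\nu_{S^\varepsilon})|\geq c\})$ and zero elsewhere.

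The crucial step is a uniform $L^2$ bound. Setting $a:=|\mathbf{p}(\nu_{S^\varepsilon})|$ and $b:=|\mathbf{q}(\nu_{S^\varepsilon})|$, using the area formula for $\kappa_\varepsilon$ whose tangential Jacobian on $\partial^* S^\varepsilon$ equals $\sqrt{\varepsilon^2 a^2+b^2}$, together with the identity $|h^\varepsilon|=b/\varepsilon$ from Lemma \ref{EulerLagrange}(4), one computes
\[
\int_0^{\infty}\!\!\int_{\R^n}|V^\varepsilon|^2\,d|\nabla\chi_{E^\varepsilon_t}|\,dt=\int_{\partial^* S^\varepsilon\cap\{a\geq c\}}\frac{\varepsilon|h^\varepsilon|^2}{a}\,d\mathcal{H}^n\leq\frac{\mathcal{H}^{n-1}(\partial^* E_0)}{c},
\]
where the last inequality comes from (\ref{initialbdd}); the truncation is essential, as otherwise the integrand $b^2/(\varepsilon a)$ is not uniformly integrable. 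For the error caused by the truncation, the same change of variables yields
\[
\int_{\partial^* E^\varepsilon\cap\kappa_\varepsilon(\{a<c\})}|\phi|\,|\mathbf{q}(\nu_{E^\varepsilon})|\,d\mathcal{H}^n\leq\|\phi\|_{C^0}\,|\nabla^{\prime}\chi_{S^\varepsilon}|(\{a<c\}),
\]
and since $\{a<c\}\subset\Sigma^{\varepsilon,1/(1-c^2)}$, the proof of Lemma \ref{discrilemma} (Markov plus (\ref{initialbdd})) gives $|\nabla^{\prime}\chi_{S^\varepsilon}|(\{a<c\})\leq\varepsilon\,\mathcal{H}^{n-1}(\partial^* E_0)/(1-c^2)\to 0$ as $\varepsilon\to+0$.

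With these two estimates in hand, the conclusion follows routinely: Lemma \ref{EconvergenstoMlemma} gives $d|\nabla\chi_{E^\varepsilon_t}|\,dt\rightharpoonup d\mu_t\,dt$, and the uniform $L^2$ bound on $V^\varepsilon$ permits Hutchinson's theorem to extract (after passing to a further subsequence) some $V\in L^2(d\mu_t\,dt)$ with $V^\varepsilon\,d|\nabla\chi_{E^\varepsilon_t}|\,dt\rightharpoonup V\,d\mu_t\,dt$ in the sense of Radon measures. Combined with the $L^1_{loc}$ convergence $\chi_{E^\varepsilon}\to\chi_E$, which sends the left-hand side of the first displayed identity to $\int_E\partial_t\phi\,dx\,dt$, and with the vanishing truncation error, this yields \eqref{vel2}. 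The main obstacle is precisely the uniform $L^2$ estimate: the natural approximate velocity fails to be bounded in $L^2$ on the part of $\partial^* S^\varepsilon$ which is close to horizontal, and it is exactly that part which Lemma \ref{discrilemma} shows to be negligible, thereby legitimising the truncation.
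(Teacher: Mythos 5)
Your argument is correct and follows essentially the same route as the paper's proof: truncate the approximate velocity on the nearly-horizontal portion of $\partial^* S^\varepsilon$ (the paper fixes $\Sigma^{\varepsilon,2}$, i.e.\ the choice $c=1/\sqrt{2}$, where you allow general $c\in(0,1)$), establish the uniform $L^2$ bound by relating $\mathbf{q}(\nu_{S^\varepsilon})/|\mathbf{p}(\nu_{S^\varepsilon})|$ to $\varepsilon h^\varepsilon$ through the Euler--Lagrange relations, show the truncation error vanishes via Markov's inequality and \eqref{initialbdd}, and invoke Hutchinson's compactness together with Lemma \ref{EconvergenstoMlemma}. The only cosmetic difference is that you perform the Jacobian bookkeeping directly with $\mathcal{H}^n\llcorner_{\partial^*S^\varepsilon}$ and the tangential Jacobian $\sqrt{\varepsilon^2 a^2+b^2}$ of $\kappa_\varepsilon$, whereas the paper routes the computation through the slice measures $d|\nabla\chi_{S^\varepsilon_z}|\,dz$ via the co-area formula and the relation \eqref{resStoEnu}; both yield the same bound.
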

	\begin{proof}
		To use Lemma \ref{discrilemma}, we assume that $ \varepsilon > 0 $ is sufficiently small. Let $ \phi \in C^1_c ( \R^n \times ( 0 , \infty ) ) $ be arbitrary. We define the approximate velocity of $ E^{ \varepsilon } $ by
		\[
		V_{ \varepsilon } ( x , t ):= - \chi_{ \kappa_{ \varepsilon } ( \partial^* S^{ \varepsilon } \backslash \Sigma^{ \varepsilon , 2 } ) }\,\frac{ \mathbf{ q } ( \nu_{ E^{ \varepsilon } } ) }{ | \mathbf{ p } ( \nu_{ E^{ \varepsilon } } ) | } ( x , t ) = \begin{cases}
			\ - \frac{ \mathbf{ q } ( \nu_{ E^{ \varepsilon } } ) }{ | \mathbf{ p } ( \nu_{ E^{ \varepsilon } } ) | } ( x , t ) &( ( x , t ) \in \kappa_{ \varepsilon } ( \partial^* S^{ \varepsilon } \backslash \Sigma^{ \varepsilon , 2 } ) )\\
			\ 0 &( ( x , t ) \in \kappa_{ \varepsilon } ( \Sigma^{ \varepsilon , 2 } ) ).
		\end{cases}
		\]
		Since the map $ \kappa_{ \varepsilon } $ 
  shrinks $z$-variable by $ \varepsilon $, the following holds as the relationship between the unit normal vectors of $ S^{ \varepsilon } $ and $ E^{ \varepsilon } $:
		\begin{equation}
        \label{resStoEnu}
		\frac{ \mathbf{ q } ( \nu_{ S^{ \varepsilon } } ) }{ | \mathbf{ p } ( \nu_{ S^{ \varepsilon } } ) | } ( x , z ) = \varepsilon\,\frac{ \mathbf{ q } ( \nu_{ E^{ \varepsilon } } ) }{ | \mathbf{ p } ( \nu_{ E^{ \varepsilon } } ) | } ( x , t ) ,\ t = \varepsilon z.
		\end{equation}
        Furthermore, since the area element of $ \kappa_{\varepsilon} $ is $ \varepsilon $ and $ t = \varepsilon z $, we have
        \begin{equation}
            \label{relationshipES}
            \int_{ E^{ \varepsilon } } \partial_t \phi\ d x d t
				= \int_{ S^{ \varepsilon } } \partial_z \phi\ d x d z.
        \end{equation}
		A simple geometric argument shows $ |\mathbf{p} ( \nu_{S^{\varepsilon}})| = |P_{T_{(x,z)}(\partial^* S^{\varepsilon})} (\mathbf{e}_{n+1})| $, and by the definition
  of $\Sigma^{\varepsilon,2}$, we may deduce $ 1/2 < | \mathbf{ p } ( \nu_{ S^{ \varepsilon } } ) |^2 $ on $ \partial^* S^{ \varepsilon } \backslash \Sigma^{ \varepsilon , 2 } $. Thus, by the co-area formula and (\ref{relationshipES}), we obtain
		\begin{equation}
			\begin{split}
				\int_{ E^{ \varepsilon } } \partial_t \phi\ d x d t
				&= \int_{ S^{ \varepsilon } } \partial_z \phi\ d x d z\\
				&= \int_{ \R^n \times (0,\infty) } ( \chi_{ \Sigma^{ \varepsilon , 2 } } + \chi_{ \partial^* S^{ \varepsilon } \backslash \Sigma^{ \varepsilon , 2 } } )\,\phi\,\mathbf{ q } ( \nu_{ S^{ \varepsilon } } )\ d | \nabla^{ \prime } \chi_{ S^{ \varepsilon } } |\\
				&= \int_{ \R^n \times (0,\infty) } \chi_{ \Sigma^{ \varepsilon , 2 } }\,\phi\,\mathbf{ q } ( \nu_{ S^{ \varepsilon } } )\ d | \nabla^{ \prime } \chi_{ S^{ \varepsilon } } | + \int_0^{\infty} \int_{ \R^n } \phi\,\chi_{ \partial^* S^{ \varepsilon } \backslash \Sigma^{ \varepsilon , 2 } }\,\frac{ \mathbf{ q } ( \nu_{ S^{ \varepsilon } } ) }{ | \mathbf{ p } ( \nu_{ S^{ \varepsilon } } ) | }\ d | \nabla \chi_{ S^{ \varepsilon }_z } | d z\\
				&= \int_{ \R^n \times (0,\infty) } \chi_{ \Sigma^{ \varepsilon , 2 } }\,\phi\,\mathbf{ q } ( \nu_{ S^{ \varepsilon } } )\ d | \nabla^{ \prime } \chi_{ S^{ \varepsilon } } | - \int_0^{\infty} \int_{ \R^n } \phi\,V_{ \varepsilon }\ d | \nabla \chi_{ E^{ \varepsilon }_t } | d t.
			\end{split}
		\end{equation}
		From Lemma \ref{discrilemma}, we see that
        \begin{equation}
            \label{firsttermconvergence}
            \lim_{\varepsilon\to+0} \int_{ \R^n \times (0,\infty) } \chi_{ \Sigma^{ \varepsilon , 2 } }\,\phi\,\mathbf{ q } ( \nu_{ S^{ \varepsilon } } )\ d | \nabla^{ \prime } \chi_{ S^{ \varepsilon } } | = 0.
        \end{equation}
        Next, we prove the following with respect to the second term:
		\begin{equation}
			\int_0^{\infty} \int_{ \R^n } | V_{ \varepsilon } |^2\ d | \nabla \chi_{ E^{ \varepsilon }_t } | d t \leq C < \infty, \label{apriorivelocity}
		\end{equation}
		where $ C $ is a constant that depends only on $\mathcal H^{n-1}(\partial^* E_0)$. By (\ref{resStoEnu}), we see that
        \begin{equation*}\begin{split}
        \int_0^{\infty} \int_{ \R^n } | V_{ \varepsilon } |^2\ d | \nabla \chi_{ E^{ \varepsilon }_t } | d t
        &= \int_0^{\infty} \int_{\R^n} \left( \frac{1}{\varepsilon}\,\chi_{\partial^* S^{\varepsilon} \backslash \Sigma^{\varepsilon,2}}\,\frac{ \mathbf{ q } ( \nu_{ S^{ \varepsilon } } ) }{ | \mathbf{ p } ( \nu_{ S^{ \varepsilon } } ) | } \right)^2 \varepsilon\ d | \nabla \chi_{ S^{\varepsilon}_z } | dz \\&
        = \frac{1}{\varepsilon} \int_0^{\infty} \int_{\R^n} \left( \chi_{\partial^* S^{\varepsilon} \backslash \Sigma^{\varepsilon,2}}\,\frac{ \mathbf{ q } ( \nu_{ S^{ \varepsilon } } ) }{ | \mathbf{ p } ( \nu_{ S^{ \varepsilon } } ) | }\right)^2 d | \nabla \chi_{ S^{\varepsilon}_z } | dz.
        \end{split}
        \end{equation*}
        From $ 1/2 < | \mathbf{ p } ( \nu_{ S^{ \varepsilon } } ) |^2 $ on $ \partial^* S^{ \varepsilon } \backslash \Sigma^{ \varepsilon , 2 } $ and Lemma \ref{EulerLagrange}(1), we obtain
		\[
		\left( \chi_{ \partial^* S^{ \varepsilon } \backslash \Sigma^{ \varepsilon , 2 } }\,\frac{ \mathbf{ q } ( \nu_{ S^{ \varepsilon } } ) }{ | \mathbf{ p } ( \nu_{ S^{ \varepsilon } } ) | } \right)^2 \leq 2\,\varepsilon^2\,| h^\varepsilon |^2.
		\]
        Thus, by (\ref{initialbdd}), we have
		\[
		\frac{ 1 }{ \varepsilon } \int_0^{\infty} \int_{ \R^n } \left( \chi_{ \partial^* S^{ \varepsilon } \backslash \Sigma^{ \varepsilon , 2 } }\,\frac{ \mathbf{ q } ( \nu_{ S^{ \varepsilon } } ) }{ | \mathbf{ p } ( \nu_{ S^{ \varepsilon } } ) | } \right)^2 d | \nabla \chi_{ S^{ \varepsilon }_z } | d z \leq 2 \int_0^{\infty} \int_{ \R^n } \varepsilon\,| h^\varepsilon |^2\ d | \nabla \chi_{ S^{ \varepsilon }_z } | d z \leq 2\,\mathcal{ H }^{ n - 1 } ( \partial^* E_0 ),
		\]
        and thus (\ref{apriorivelocity}) is proved. As Lemma \ref{EconvergenstoMlemma} and (\ref{apriorivelocity}) are valid, we can apply Theorem \ref{measurefunctiontheorem} to $ ( d | \nabla \chi_{ E^{ \varepsilon }_t } | dt, V_{\varepsilon} ) $. Therefore, taking further subsequence if necessary, we obtain a function $ V \in L^2 (d\mu_t dt) $ such that
        \begin{equation}
        \label{Vconvergence}
        \lim_{\varepsilon\to+0} \int_0^{\infty} \int_{\R^n} \phi\,V_{\varepsilon}\ d|\nabla\chi_{E^{\varepsilon}_t} | dt = \int_0^{\infty} \int_{\R^n} \phi\,V\ d\mu_t dt
        \end{equation}
        for all $ \phi \in C^1_c (\R^n \times (0,\infty) ) $. By (\ref{firsttermconvergence}), (\ref{Vconvergence}) and $ \chi_{ E^{ \varepsilon } } \to \chi_E $ in $ L^1_{loc} (\R^n\times\R) $, we obtain
		\begin{equation*}
        \begin{split}
		\int_E \partial_t \phi\ d x d t
        &= \lim_{ \varepsilon \to +0 } \int_{ E^{ \varepsilon } } \partial_t \phi\ d x d t\\
        &= \lim_{ \varepsilon \to +0 } \left( - \int_0^{\infty} \int_{ \R^n } \phi\,V_{ \varepsilon }\ d | \nabla \chi_{ E^{ \varepsilon }_t } | d t \right)
        = - \int_0^{\infty} \int_{ \R^n } \phi\,V\ d \mu_t d t,
        \end{split}
		\end{equation*}
		for all $ \phi \in C^1_c (\R^n\times(0,\infty)) $. This completes the proof.
	\end{proof}
     \begin{lemma}
        \label{areacontilemma}
        There exists $G\subset\R^+$ with $\mathcal L^1(
        \R^+\setminus G)=0$ such that $\chi_{E_t}$ is 
        $1/2$-H\"{o}lder continuous in $L^1(\R^n)$ norm
        with respect to $t$ on $G$.
    \end{lemma}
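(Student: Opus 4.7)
The plan is to first upgrade the velocity identity (\ref{vel2}) to a pointwise formula
\begin{equation*}
\int_{E_{t_2}} \phi\, dx - \int_{E_{t_1}} \phi\, dx = \int_{t_1}^{t_2} \int_{\R^n} \phi\, V\, d\mu_t\, dt, \qquad \phi \in C^1_c(\R^n),
\end{equation*}
valid for $t_1, t_2$ in a full-measure set $G \subset \R^+$, and then combine this identity with the $L^2$-bound of $V$ (Proposition \ref{velocity}) and the local mass bound of the Brakke flow (Definition \ref{Brakke}(2)) via Cauchy--Schwarz.

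To carry out the first step, I would fix a countable $C^0$-dense family $\{\phi_k\}_{k\in\N} \subset C^1_c(\R^n)$, let $G_k$ denote the set of Lebesgue points of the locally bounded function $t \mapsto \int_{E_t} \phi_k\, dx$, and set $G := \bigcap_k G_k$, so that $\mathcal{L}^1(\R^+ \setminus G) = 0$. For $t_1, t_2 \in G$ I would then plug into (\ref{vel2}) test functions of the form $\phi_k(x)\,\psi_\delta(t)$, where $\psi_\delta \in C^1_c((0, \infty))$ is a piecewise-linear approximation of $\chi_{(t_1, t_2)}$ with transitions of width $\delta$. Letting $\delta \to 0$, the left-hand side converges to $\int_{E_{t_1}} \phi_k\, dx - \int_{E_{t_2}} \phi_k\, dx$ by the Lebesgue-point property, while the right-hand side converges to $-\int_{t_1}^{t_2} \int \phi_k V\, d\mu_t\, dt$ by dominated convergence (noting $\phi_k V \in L^1(d\mu_t dt)$, since $V \in L^2(d\mu_t dt)$ and $\mu_t$ has locally finite mass). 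Density in $C^1_c(\R^n)$ then extends the identity to arbitrary $\phi$.

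For the Hölder estimate I would fix $R > 0$, $T > 0$, and $t_1, t_2 \in G \cap [0, T]$. Duality gives
\begin{equation*}
\|\chi_{E_{t_2}} - \chi_{E_{t_1}}\|_{L^1(B_R)} = \sup\Bigl\{\int (\chi_{E_{t_2}} - \chi_{E_{t_1}})\, \phi\, dx \,\Bigm|\, \phi \in C^1_c(B_R),\ \|\phi\|_\infty \le 1 \Bigr\},
\end{equation*}
and combining the formula from Step~1 with Cauchy--Schwarz yields
\begin{equation*}
\|\chi_{E_{t_2}} - \chi_{E_{t_1}}\|_{L^1(B_R)} \le \left(\int_0^{\infty} \!\!\int_{\R^n} |V|^2\, d\mu_t\, dt\right)^{1/2}\!\left(\sup_{t \in [0, T]} \mu_t(B_R)\right)^{1/2}\!(t_2 - t_1)^{1/2},
\end{equation*}
where both constants are finite. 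This gives local $1/2$-Hölder continuity in $L^1(B_R)$ for every $R$, which yields the claim upon reading $L^1(\R^n)$ as $L^1_{loc}$ (or upon letting $R \to \infty$ via dominated convergence when $E_0$ has finite measure).

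The main technical point I anticipate is Step~1: identifying the boundary values $\int_{E_{t_i}} \phi\, dx$ as limits of smooth cut-offs rests on Lebesgue-point behavior, and this is precisely what picks out the exceptional null set $\R^+ \setminus G$. Once this distributional fundamental theorem of calculus is in place, the duality and Cauchy--Schwarz step is essentially automatic.
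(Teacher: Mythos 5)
Your proposal is correct and follows essentially the same route as the paper: both select $G$ as the common Lebesgue-point set of $t\mapsto\int_{E_t}\phi\,dx$ over a countable $C^1_c$-dense family, both pass smooth cut-offs $\eta\to\chi_{[t_1,t_2]}$ through \eqref{vel2} to obtain the exact identity on $G$, and both close with Cauchy--Schwarz against $\|V\|_{L^2(d\mu_t dt)}$. The only small difference is in the final estimate: the paper tests directly against $\chi_{E_{t_1}}$ (and likewise $\chi_{E_{t_2}}$) and invokes the Brakke energy-decay bound $\mu_t(\R^n)\le\mu_0(\R^n)=\mathcal H^{n-1}(\partial^* E_0)$ to get the global $L^1(\R^n)$ estimate in one stroke, whereas you use $L^1$--$L^\infty$ duality on $B_R$ and then send $R\to\infty$; your parenthetical justification (``when $E_0$ has finite measure'') should really be the energy-decay bound above, which is what makes $\sup_t\mu_t(\R^n)$ finite.
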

    \begin{proof}
        Let $G\subset\R^+$ be a set such that $t\in G$ is a Lebesgue point of function $f_\phi (s):=\int_{E_s}\phi\,dx$ for any $\phi\in C_c^1(\R^n)$. By choosing a countable dense set of functions in $C_c^1(\R^n)$ and using a standard
        result in measure theory, one can prove that such
        $G$ is a full-measure set in $\R^+$. Let $t_1,t_2$
        ($t_1<t_2$) be arbitrary points in $G$, and consider a smooth approximation $\eta$ of $\chi_{[t_1,t_2]}$. Use $\phi(x)\,\eta(t)$ in \eqref{vel2} and let $\eta\to \chi_{[t_1,t_2]}$ 
        to obtain 
        \[
        -\int_{E_{t_2}}\phi\ dx+\int_{E_{t_1}}\phi\ dx
        =-\int_{t_1}^{t_2}\int_{\R^n} \phi\,V\ d\mu_t dt.
        \]
        By approximation, we may replace $\phi$ by $\chi_{E_{t_1}}$ and obtain 
        \begin{equation}\label{ssa}
        \begin{split}
        |\mathcal L^n(E_{t_1})-\mathcal L^n(E_{t_2}\cap E_{t_1})|
        &\leq \left(\int_{t_1}^{t_2}\mu_t(E_{t_1})\ dt\right)^{\frac12}\|V\|_{L^2(d\mu_t dt)} \\
        &\leq (t_2-t_1)^{\frac12}\,\mathcal H^{n-1}(\partial^* E_0)^\frac12\,\|V\|_{L^2(d\mu_t dt)},
        \end{split}
        \end{equation}
        where we also used
        $\mu_t(\R^n)\leq \mu_0(\R^n)=\mathcal H^{n-1}(\partial^* E_0)$. This inequality 
        is due to the energy
        decreasing property of Brakke flow which follows
        from \eqref{Brakkeineq}. 
        The left-hand side of \eqref{ssa} is $\mathcal L^n(E_{t_1}\setminus E_{t_2})$. One can 
        obtain the similar estimate for $\mathcal L^n(E_{t_2}\setminus E_{t_1})$ by considering
        $\chi_{E_{t_2}}$. This proves the claim.
         \end{proof}  
    \begin{remark}
    \label{abstcontirmk}
    By Lemma \ref{areacontilemma}, if necessary, 
    we may re-define $E$
    so that $\chi_{E_t}$ is $1/2$-H\"{o}lder 
    continuous in $L^1(\R^n)$ on $\R^+$. We also point out that, 
    one can re-define the Brakke flow $\{\mu_t\}_{t\in\R^+}$ so that it is left-continuous
    at all $t\in\R^+$. This is because, for any 
    $\phi\in C_c^2(\R^n)$, $\mu_t(\phi)-C(\phi)\,t$ 
    is a monotone decreasing function of $t$ for a suitable
    $C>0$, and is discontinuous on a countable set at most
    (see for example \cite[Proposition 3.3]{tonegawa2019brakke}). At these 
    discontinuous points, one may re-define
    $\mu_t$ (by approaching from the left)
    so that it is left-continuous while 
    keeping \eqref{Brakkeineq}. Now 
        the claim of Lemma \ref{E<<mu} is for 
        a.e.$\,t\geq0$, while Definition \ref{generalizedBVflow}(ii) is for all $t\geq 0$. Let $\tilde G$ be the set of points where the conclusion of Lemma \ref{E<<mu} holds, 
        which is a full-measure set of $\R^+$. For any $t\notin \tilde G$, we may choose a sequence
        $\{t_i\}\subset \tilde G$ approaching from
        left to $t$. Since $\chi_{E_{t_i}}\to\chi_{E_t}$ in $L^1(\R^n)$, we have for any
        $\phi\in C^0_c(\R^n;\R^+)$
        \[|\nabla\chi_{E_t}|(\phi)\leq \liminf_{i\to\infty}|\nabla\chi_{E_{t_i}}|(\phi)\leq \liminf_{i\to\infty}
        \mu_{t_i}(\phi)=\mu_t(\phi).
        \]
        Here the first inequality is due to the lower semi-continuous property, the second is due to 
        $t_i\in \tilde G$, and the last is the left-continuity of $\mu_t$. 
        Thus we have the desired property Definition \ref{generalizedBVflow}(ii). 
    \end{remark} 
        
	\begin{prop}
        \label{S<<dmudtprop}
		For $ \{ \mu_t \}_{ t \geq 0 } $ and $ \{ E_t \}_{ t \geq 0 } $ of Section \ref{translative}, we have $ d| \nabla^{ \prime } \chi_{ E } | \ll d \mu_t d t $.
	\end{prop}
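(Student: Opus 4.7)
The plan is to represent $\nabla'\chi_E$ as a vector-valued measure with a density with respect to $d\mu_t dt$, by combining the two main tools developed in this section: Lemma \ref{E<<mu} (together with Remark \ref{abstcontirmk}) handles the spatial part, and Proposition \ref{velocity} handles the temporal part.

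Concretely, I would fix an arbitrary test field $X \in C^1_c(\R^n \times (0, \infty); \R^{n+1})$ and split it as $X = (\tilde X, \phi)$ with $\tilde X$ valued in $\R^n$ and $\phi$ scalar. By the definition of the space-time total variation,
\begin{equation*}
-\int X \cdot d\nabla' \chi_E = \int_E \nabla_x \cdot \tilde X\, dx\, dt + \int_E \partial_t \phi\, dx\, dt.
\end{equation*}
For the first term, Fubini together with Lemma \ref{E<<mu} (valid for every $t \geq 0$ thanks to Remark \ref{abstcontirmk}) supplies a density $\rho(x,t) \in [0,1]$ with $d|\nabla \chi_{E_t}|\, dt = \rho\, d\mu_t\, dt$, yielding
\begin{equation*}
\int_E \nabla_x \cdot \tilde X\, dx\, dt = \int_0^\infty \int_{\R^n} \tilde X \cdot \nu_{E_t}\, \rho\, d\mu_t\, dt.
\end{equation*}
For the second term, Proposition \ref{velocity} directly provides $V \in L^2(d\mu_t dt)$ such that $\int_E \partial_t \phi\, dx\, dt = -\int \phi\, V\, d\mu_t\, dt$. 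Combining these two identities gives
\begin{equation*}
-\int X \cdot d\nabla'\chi_E = \int \bigl(\tilde X \cdot \nu_{E_t}\, \rho - \phi\, V\bigr)\, d\mu_t\, dt
\end{equation*}
for every such $X$, which identifies $\nabla'\chi_E = (-\rho\, \nu_{E_t},\, V)\, d\mu_t\, dt$ as a vector-valued measure on $\R^n \times (0, \infty)$. Its total variation is then $\sqrt{\rho^2 + V^2}\, d\mu_t\, dt$, and in particular $|\nabla'\chi_E| \ll d\mu_t\, dt$.

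The calculation itself is essentially routine once Proposition \ref{velocity} and the refined version of Lemma \ref{E<<mu} are in hand, so the conceptual work has already been carried out. The only point I expect could require a little care is the behavior on the initial slice $\R^n \times \{0\}$: the test fields above must vanish at $t = 0$, and one needs to rule out a concentration of $|\nabla'\chi_E|$ on $\R^n \times \{0\}$ that is invisible to $d\mu_t\, dt$. This I would handle by appealing to the $L^1$-continuity of $t \mapsto \chi_{E_t}$ from Lemma \ref{areacontilemma} and Remark \ref{abstcontirmk}, which prohibits a temporal jump of $\chi_E$ at $t = 0$ from within $\R^+$.
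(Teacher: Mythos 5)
Your argument is essentially identical to the paper's, which likewise decomposes $\nabla'\chi_E = (\nabla\chi_{E_t}\,dt,\, V\,d\mu_t\,dt)$ as a vector-valued measure, using Lemma \ref{E<<mu} for the spatial part and Proposition \ref{velocity} for the temporal part; you just spell out the Fubini/slicing computation and the density $\rho = d|\nabla\chi_{E_t}|/d\mu_t$ explicitly. Your closing worry about $\R^n\times\{0\}$ is not really needed, since the statement is used (and should be read) on the open set $\R^n\times(0,\infty)$ where the test fields from Proposition \ref{velocity} determine the measure completely; note also that the $L^1$-continuity argument would not in fact remove such a singular piece, because if $\mathcal{L}^n(E_0)>0$ then $|\nabla'\chi_E|$ genuinely charges $\R^n\times\{0\}$ (from the jump across $\{t=0\}$ in $\R^{n+1}$) while $d\mu_t\,dt$ does not.
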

 
	\begin{proof}
		From Proposition \ref{velocity},
		\[
		\nabla^{ \prime } \chi_{ E } ( x , t ) = ( \nabla \chi_{ E_t } d t , V ( x , t )\,d \mu_t d t )
		\]
		is satisfied in the sense of vector-valued measure. Thus, from Lemma \ref{E<<mu}, we obtain $ d| \nabla^{ \prime } \chi_{ E } | \ll d \mu_t d t $.
	\end{proof}
	
	\subsection{Basic Properties of $ L^2 $ Flow and Set of Finite Perimeter}
	\label{Property}
	In this subsection, we state the properties of $ L^2 $ flow and set of (locally) finite perimeter. The proof of Theorem \ref{mainresult} will follow from those properties. The arguments in this subsection are mostly contained in \cite{roger2008allen,StuvardTonegawa+2022}
	and we include this for the convenience of the reader. 
	\begin{prop}
		Let $\{ \mu_t \}_{ t \in \R^+ } $ and $ E $ be as in Section \ref{translative} and let
  $d\mu=d\mu_t dt$.
		Then $ \mu \llcorner_{ \partial^* E } $ is a rectifiable Radon measure and we have the following for $ \mathcal{ H }^n $-a.e. $ ( x , t ) \in \partial^* E \cap \{t>0\}$:
		\begin{description}
			\item[\quad\textup{(1)}] the tangent space $ T_{ ( x , t ) }\, \mu $ exists, and $ T_{ ( x , t ) } \,\mu = T_{ ( x , t ) } ( \partial^* E ) $,
			\item[\quad\textup{(2)}] $ \begin{pmatrix}
				h ( x , t )\\
				1
			\end{pmatrix}
			\in T_{ ( x , t ) }\, \mu $,
			\item[\quad\textup{(3)}] $ x \in \partial^* E_t $, and $ T_x \, \mu_t = T_x ( \partial^* E_t ) $,
			\item[\quad\textup{(4)}] $ \mathbf{ p } ( \nu_E ( x , t ) ) \neq 0 $, and $ \nu_{ E_t } ( x ) = | \mathbf{ p } ( \nu_E ( x , t ) ) |^{ - 1 }\ \mathbf{ p } ( \nu_E ( x , t ) ) $,
			\item[\quad\textup{(5)}] $ T_x ( \partial^* E_t ) \times \{ 0 \} $ is linear subspace of $ T_{ ( x , t ) }\, \mu $.
		\end{description}
		\label{princi}	
	\end{prop}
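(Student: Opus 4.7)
The plan is to exploit the rectifiable structure of the space-time measure $\mu=d\mu_t\,dt$ together with the absolute continuity $d|\nabla^{\prime}\chi_E|\ll d\mu$ furnished by Proposition \ref{S<<dmudtprop}, and then reduce to standard tangent-plane and slicing arguments in the spirit of Mugnai--R\"oger \cite{roger2008allen} and Stuvard--Tonegawa \cite{StuvardTonegawa+2022}.

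\textbf{Step 1: tangent-plane formula for $\mu$.} Together with the velocity $V$ of Proposition \ref{velocity}, the Brakke flow $\{\mu_t\}$ falls within the framework of an $L^2$ flow (Definition \ref{L2def}); moreover $V$ coincides with $h$ on the support of $\mu$ since both describe the normal speed of the same evolving interface. A blow-up analysis at a generic point then shows that $\mu$ is $n$-rectifiable on $\R^n\times(0,\infty)$, with tangent plane
\[
T_{(x_0,t_0)}\,\mu \;=\; T_{x_0}\mu_{t_0}\;\oplus\;\mathrm{span}\bigl((h(x_0,t_0),1)\bigr)
\]
at $\mu$-a.e.\ $(x_0,t_0)$. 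The integrality of $\mu_{t_0}$ and the $L^2$ bound on $h$ (equivalently on $V$) are essential to ensure convergence of the blow-ups and to rule out degenerate configurations. From this formula we read off (2), as well as the inclusion $T_x\mu_t\times\{0\}\subset T_{(x,t)}\mu$.

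\textbf{Step 2: coincidence of tangent planes with $\partial^*E$.} By De Giorgi's structure theorem, $\partial^*E$ is countably $n$-rectifiable with an $\mathcal{H}^n$-a.e.\ defined tangent plane $T_{(x,t)}(\partial^*E)$ and outer normal $\nu_E(x,t)$, and $|\nabla^{\prime}\chi_E|=\mathcal{H}^n\llcorner_{\partial^*E}$. Because $|\nabla^{\prime}\chi_E|\ll\mu$ by Proposition \ref{S<<dmudtprop} and both measures are $n$-rectifiable, at $|\nabla^{\prime}\chi_E|$-a.e.\ point the Radon--Nikod\'ym density is positive, and a standard measure-theoretic argument then forces the two tangent planes to agree. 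This proves (1), and in particular $\mu\llcorner_{\partial^*E}$ is $n$-rectifiable.

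\textbf{Step 3: normal and slice structure.} From (1) and Step 1, $\nu_E\perp T_{(x,t)}\mu\supset T_x\mu_t\times\{0\}$, so $\mathbf{p}(\nu_E)\perp T_x\mu_t$. If $\mathbf{p}(\nu_E)=0$, then $\nu_E=\pm(0,1)$ and $T_{(x,t)}\mu\subset\R^n\times\{0\}$, contradicting $(h,1)\in T_{(x,t)}\mu$; hence $\mathbf{p}(\nu_E)\ne 0$. Standard slicing theory for sets of finite perimeter (Vol'pert's theorem) then yields $x\in\partial^*E_t$ and $\nu_{E_t}(x)=\mathbf{p}(\nu_E(x,t))/|\mathbf{p}(\nu_E(x,t))|$ at $\mathcal{H}^n$-a.e.\ $(x,t)\in\partial^*E$, which is (4). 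Since $T_x\mu_t$ and $T_x(\partial^*E_t)$ are both $(n-1)$-dimensional and both orthogonal to $\nu_{E_t}$, they coincide, giving (3); combining this with the inclusion from Step 1 yields (5).

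\textbf{Main obstacle.} The principal difficulty lies in Step 1: proving the tilted tangent-plane formula. The $n$-rectifiability of $\mu$ follows from the a.e.\ rectifiability of $\mu_t$, but identifying the time-tilt as precisely $(h,1)$ requires a delicate blow-up argument leveraging the $L^2$ control on the velocity. Once this geometric input is in hand, the remaining parts of the proposition reduce to routine normal/slicing bookkeeping.
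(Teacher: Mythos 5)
The broad skeleton of your proposal matches the paper's: density arguments to identify $T_{(x,t)}\mu$ with $T_{(x,t)}(\partial^*E)$, the $L^2$-flow structure to place $(h,1)$ in the tangent plane, and Vol'pert slicing for the time sections. However, your Step~1 contains a genuine gap that propagates into your proof of (3) and (5).

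You assert upfront, via an unexplained ``blow-up analysis,'' the decomposition
\[
T_{(x_0,t_0)}\,\mu = T_{x_0}\mu_{t_0}\oplus\mathrm{span}\bigl((h(x_0,t_0),1)\bigr)
\]
at $\mu$-a.e.\ point. What the $L^2$-flow property (Proposition \ref{L2cor} together with Proposition \ref{BtoL2}) actually delivers is only the containment $(h,1)\in T_{(x,t)}\mu$. The other half of your formula, namely $T_{x}\mu_{t}\times\{0\}\subset T_{(x,t)}\mu$, is precisely statement (5) of the proposition you are trying to prove (after identifying $T_x\mu_t$ with $T_x(\partial^* E_t)$), so you are assuming what needs to be shown. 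In the paper, this decomposition appears only afterwards, as \eqref{span} in the proof of Theorem \ref{mainresult}, derived \emph{from} Proposition \ref{princi}. Consequently, your Step~3 argument for (3) --- that $T_x\mu_t$ is orthogonal to $\nu_{E_t}$ because $T_x\mu_t\times\{0\}\subset T_{(x,t)}\mu$ --- rests on the unproved inclusion. The paper instead obtains $T_x\mu_t = T_x(\partial^* E_t)$ by a separate density-comparison argument at each fixed time slice (Lemma \ref{E<<mu}, Lemma \ref{upperdensityratio}, and a repetition of the argument from (1) at the level of $\mu_t$ and $|\nabla\chi_{E_t}|$), and only then verifies (5) by a direct computation ${}^t(z,0)\cdot\nu_E=|\mathbf{p}(\nu_E)|\,(z\cdot\nu_{E_t})=0$. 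Your derivation of $\mathbf{p}(\nu_E)\neq 0$ from (1) and (2) is valid, and Step~2 is fine, but you need to replace the slice-tangent identification in Step~3 with an argument that does not use (5). Also, your remark that ``$V$ coincides with $h$ on the support of $\mu$ since both describe the normal speed'' is an unjustified shortcut: the velocity $V$ of Proposition \ref{velocity} and the mean curvature $h$ are constructed independently, and establishing that they coincide is essentially the content of the main theorem, not an a priori fact; the paper sidesteps this by invoking Proposition \ref{BtoL2}, which shows directly that the Brakke flow is an $L^2$ flow with velocity $h$.
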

	The crucial step of the proof of Theorem \ref{mainresult} is to prove the above proposition, for which the $ L^2 $ flow property of $ \mu_t $ plays a pivotal role, and this proposition is proved in detail by \cite[Lemma 4.7]{StuvardTonegawa+2022}. In this paper, we will give a brief outline of the proof of Proposition \ref{princi}. 
	\vskip.5\baselineskip
	First, the following are simple propositions of $ L^2 $ flow by \cite[Proposition 3.3]{roger2008allen} and \cite[Theorem 4.3]{StuvardTonegawa+2022}.
	\begin{prop}
		Let $\{ \mu_t \}_{ t \in \R^+ } $ and $ V $ be an $ L^2 $ flow in Definition \ref{L2def}, and let $ \mu $ be the space-time measure $ d \mu = d \mu_t d t $. Then,
		\begin{equation}
			\begin{pmatrix}
				V ( x , t )\\
				1
			\end{pmatrix} \in T_{ ( x , t ) }\, \mu
		\end{equation}
		at $ \mu $-a.e. $ ( x , t ) \in \R^n\times \R^+ $ wherever the tangent space $ T_{ ( x , t ) }\, \mu $ exists.
		\label{L2cor}
	   \end{prop}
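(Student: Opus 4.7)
The plan is a blow-up argument at a generic point of $\mu$. Consider $(x_0, t_0) \in \R^n \times \R^+$ at which (i) the tangent plane $T := T_{(x_0, t_0)}\,\mu$ exists, as an $n$-plane in $\R^{n+1}$ with some multiplicity $\theta > 0$, and (ii) $(x_0, t_0)$ is a Lebesgue point of $V$ with respect to $\mu$. Condition (ii) holds at $\mu$-a.e.\,point since $V \in L^2(\mu) \subset L^1_{\mathrm{loc}}(\mu)$ and Lebesgue differentiation applies to arbitrary Radon measures. Writing $V_0 := V(x_0, t_0)$, the goal is to prove $(V_0, 1) \in T$.

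I would test \eqref{L2flow} against rescaled functions. For $\phi \in C^1_c(\R^n \times (0, \infty))$ supported near the origin and $r > 0$ small, set $\phi_r(x, t) := \phi(((x, t) - (x_0, t_0))/r)$, so that $\|\phi_r\|_{C^0} = \|\phi\|_{C^0}$ and $\nabla^{\prime}\phi_r$ carries a factor $1/r$. Changing variables by $\eta = ((x, t) - (x_0, t_0))/r$ and introducing the blow-up measure $\mu_r(A) := r^{-n}\mu((x_0, t_0) + rA)$, \eqref{L2flow} rearranges into
\[
\left|\int \nabla^{\prime}\phi(\eta) \cdot \big(V(x_0 + r\eta_x, t_0 + r\eta_t), 1\big)\, d\mu_r(\eta)\right| \leq C\, r^{1-n}\,\|\phi\|_{C^0}.
\]
By the tangent plane property $\mu_r \rightharpoonup \theta\,\mathcal{H}^n\llcorner_{T}$, and by the Lebesgue-point property together with $\mu(B_r((x_0,t_0)))/r^n \to \theta\,\omega_n$, one obtains $\int |V(x_0 + r\eta) - V_0|\, d\mu_r \to 0$ on any bounded set. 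These combine (in the spirit of Hutchinson's measure-function-pair convergence) to give
\[
\lim_{r \to 0}\int \nabla^{\prime}\phi \cdot \big(V(x_0 + r\eta), 1\big)\, d\mu_r = \theta \int_T \nabla^{\prime}\phi \cdot (V_0, 1)\, d\mathcal{H}^n.
\]
For $n \geq 2$ (the relevant geometric dimension), $C r^{1-n} \to 0$ and the limit integral vanishes for every admissible $\phi$.

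It remains to conclude $(V_0, 1) \in T$ from the distributional identity $\int_T (V_0, 1) \cdot \nabla^{\prime}\phi\, d\mathcal{H}^n = 0$. Decompose $(V_0, 1) = \xi_T + a N$ with $\xi_T \in T$ and $N$ a unit normal to $T$. Since $\xi_T$ is a constant tangential vector, integration by parts on $T$ annihilates its contribution. What remains is $a \int_T \partial_N\phi\, d\mathcal{H}^n = 0$ for every $\phi$; plugging in $\phi$ of the product form $\chi(\xi_T)\,s$ (with $s$ the normal coordinate) forces $a = 0$, and hence $(V_0, 1) \in T$. The main difficulty lies in passing to the limit in the product of the weakly converging measure $\mu_r$ with the oscillating field $V(x_0 + r\eta)$; this is resolved precisely by the Lebesgue-point property, which upgrades the convergence of the function factor to strong $L^1(\mu_r)$ on bounded sets, matching the weak convergence of $\mu_r$ to $\theta\,\mathcal{H}^n\llcorner_{T}$.
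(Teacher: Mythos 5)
Your blow-up strategy is the right one, and most of the steps (Lebesgue point of $V$, $\mu_r \rightharpoonup \theta\,\mathcal H^n\llcorner_T$, the measure-function-pair style convergence of the rescaled integrals, and the final linear-algebra step forcing the normal component to vanish) are sound. The paper itself does not prove this proposition; it cites \cite[Proposition 3.3]{roger2008allen}, so the comparison here is against what a correct proof must contain.

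There is, however, a genuine gap: the exponent in your error bound has the wrong sign, and the way you invoke \eqref{L2flow} cannot close the argument. After the change of variables, the $L^2$-flow inequality applied to $\phi_r$ gives
\[
\Bigl|\,\int \nabla'\phi(\eta)\cdot\bigl(V(x_0+r\eta),1\bigr)\,d\mu_r(\eta)\,\Bigr|
\ \le\ C\,r^{1-n}\,\|\phi\|_{C^0},
\]
exactly as you write, but $r^{1-n}=r^{-(n-1)}\to\infty$ for $n\ge 2$; it does not tend to $0$. The reason is structural: $\|\phi_r\|_{C^0}=\|\phi\|_{C^0}$ is scale-invariant, so the global bound $C\|\phi\|_{C^0}$ in Definition \ref{L2def}(b'2) simply cannot produce a decaying error under isotropic rescaling. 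As stated, your limiting identity $\theta\int_T\nabla'\phi\cdot(V_0,1)\,d\mathcal H^n=0$ is not established.

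The correct way to close the argument is to upgrade \eqref{L2flow} to the statement that the functional $\phi\mapsto\int_0^\infty\!\int_{\R^n}\nabla'\phi\cdot(V,1)\,d\mu_t\,dt$ is represented by a finite signed Radon measure $\Lambda$ on $\R^n\times(0,\infty)$ with $|\Lambda|(\R^n\times(0,\infty))\le C$, so that the relevant bound for the rescaled test function is the \emph{local} one $|\Lambda(\phi_r)|\le\|\phi\|_{C^0}\,|\Lambda|\bigl(B_{Rr}(x_0,t_0)\bigr)$, where $R=\mathrm{diam}(\mathrm{supp}\,\phi)$. Writing $\Lambda=f\mu+\Lambda_s$ (Lebesgue decomposition) and using Besicovitch differentiation, one has at $\mu$-a.e.\ point where the tangent plane exists that $|\Lambda|(B_\rho)/\rho^n$ converges to $|f(x_0,t_0)|\,\theta\,\omega_n<\infty$. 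Hence
\[
\Bigl|\,\int \nabla'\phi(\eta)\cdot\bigl(V(x_0+r\eta),1\bigr)\,d\mu_r(\eta)\,\Bigr|
\ \le\ \frac{\|\phi\|_{C^0}\,|\Lambda|\bigl(B_{Rr}(x_0,t_0)\bigr)}{r^{n-1}}
\ =\ R^n\,r\cdot\frac{|\Lambda|\bigl(B_{Rr}(x_0,t_0)\bigr)}{(Rr)^n}\,\|\phi\|_{C^0}\ \longrightarrow\ 0,
\]
at such points, which is the decay you were after. With this replacement your limit identity holds, and the remaining argument (decomposing $(V_0,1)=\xi_T+aN$, integrating by parts on $T$ to kill $\xi_T$, and testing with $\phi$ of product form to force $a=0$) goes through unchanged.
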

        \begin{prop}
		\label{BtoL2}
		The Brakke flow $ \{ \mu_t \}_{ t\in\R^+ } $ in Definition \ref{Brakke} with $ \mu_0 (\R^n) < \infty $ is an $ L^2 $ flow with the velocity $ V = h $ in Definition \ref{L2def}. Namely, there exists $ C = C ( \mu_t ) > 0 $ such that
		\begin{equation}
			\left| \int_{ 0 }^{ \infty } \int_{ \R^n } \partial_t \phi ( x , t ) + \nabla \phi ( x , t ) \cdot h ( x , t )\ d \mu_t ( x ) dt\ \right| \leq C\,\| \phi \|_{ C^0 }, \label{L2ineq}
		\end{equation}
		for all $ \phi \in C^1_c ( \R^n \times ( 0 , \infty ) ) $.
	\end{prop}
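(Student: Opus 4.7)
The strategy is a classical trick (essentially that of Mugnai--R\"{o}ger \cite{roger2008allen}): exploit the one-sided nature of Brakke's inequality (\ref{Brakkeineq}) together with the non-positivity of the dissipation term $-\phi\,|h|^2$ that is present whenever the test function is non-negative. For a general signed $\phi \in C^1_c(\R^n \times (0,\infty))$, the plan is to sandwich $\phi$ between $\pm \Phi$ for a smooth non-negative majorant $\Phi \geq |\phi|$, and to extract a two-sided bound on the functional
\[
J(\phi) := \int_0^\infty \int_{\R^n} \bigl( \partial_t\phi + \nabla\phi\cdot h \bigr)\, d\mu_t\,dt
\]
by applying Brakke's inequality separately to the two non-negative test functions $\Phi \pm \phi$.

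Concretely, given $\phi$, I would fix cutoffs $\psi \in C^1_c(\R^n;[0,1])$ with $\psi \equiv 1$ on $\mathbf{p}(\mathrm{supp}\,\phi)$ and $\eta \in C^1_c((0,\infty);[0,1])$ with $\eta \equiv 1$ on $\mathbf{q}(\mathrm{supp}\,\phi)$, and set $\Phi := \|\phi\|_{C^0}\,\psi(x)\,\eta(t)$. Then $\Phi_{\pm} := \Phi \pm \phi$ are non-negative functions in $C^1_c(\R^n \times (0,\infty);\R^+)$ and hence admissible in (\ref{Brakkeineq}). Applying Brakke's inequality to each $\Phi_\pm$ between times $t_1 = 0$ and $t_2$ sufficiently large that $\Phi_\pm(\cdot,t_2)=0$, the left-hand side of (\ref{Brakkeineq}) vanishes, so
\[
0 \leq J(\Phi) \pm J(\phi) - \int_0^\infty\int_{\R^n} \Phi_\pm\, |h|^2\, d\mu_t\,dt.
\]
Since $\Phi_\pm \geq 0$, the dissipation integral is non-negative, and one concludes $\pm J(\phi) \leq J(\Phi)$, i.e.\ $|J(\phi)| \leq J(\Phi)$.

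It remains to estimate $J(\Phi) = \|\phi\|_{C^0}\int_0^\infty\int_{\R^n}\bigl(\eta\,\nabla\psi\cdot h + \psi\,\eta'\bigr)\,d\mu_t\,dt$. The temporal term is bounded by $\|\eta'\|_\infty\,\mathcal{L}^1(\mathrm{supp}\,\eta)\,\sup_t \mu_t(\R^n)$, while the spatial term is controlled by Cauchy--Schwarz against the Brakke hypothesis $h \in L^2(d\mu_t dt)$. The finiteness of both relies crucially on $\sup_t\mu_t(\R^n) \leq \mu_0(\R^n)<\infty$, which follows from (\ref{Brakkeineq}) applied to a spatial cutoff approximating $\phi\equiv 1$ and the assumption $\mu_0(\R^n)<\infty$. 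This produces $|J(\phi)|\leq C\|\phi\|_{C^0}$ with $C$ depending only on $\mu_t$, giving (\ref{L2ineq}). The main delicate point is the sign bookkeeping in Brakke's inequality, that is, discarding the dissipation term with the correct sign in \emph{both} the $\Phi_+$ and the $\Phi_-$ inequalities so as to produce a two-sided estimate from an intrinsically one-sided variational inequality; the hypothesis $\mu_0(\R^n)<\infty$ is then what allows the auxiliary cutoffs $\psi,\eta$ to contribute only $\mu_t$-dependent constants.
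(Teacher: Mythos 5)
The paper does not actually prove Proposition \ref{BtoL2}; it is cited to \cite[Proposition 3.3]{roger2008allen} and \cite[Theorem 4.3]{StuvardTonegawa+2022}, so there is no in-paper proof to compare against. Your argument is a faithful reconstruction of the standard proof underlying those citations: linearize the one-sided Brakke inequality by testing against the non-negative majorant and minorant $\Phi\pm\phi$, discard the non-positive dissipation $-\int\int(\Phi\pm\phi)\,|h|^2\,d\mu_t dt$, and conclude $|J(\phi)|\leq J(\Phi)$.

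One point deserves to be made explicit, because as written it looks like your constant depends on $\mathrm{supp}\,\phi$. When estimating $J(\Phi)=\|\phi\|_{C^0}\int\int(\psi\,\eta'+\eta\,\nabla\psi\cdot h)\,d\mu_t dt$, the Cauchy--Schwarz bound on the spatial term is of order $\|\nabla\psi\|_{C^0}\bigl(\mathcal L^1(\mathrm{supp}\,\eta)\,\mu_0(\R^n)\bigr)^{1/2}\|h\|_{L^2(d\mu_tdt)}$, which grows with the time-extent of $\phi$. To obtain a constant $C$ that truly depends only on $\{\mu_t\}$ and not on $\phi$, one has to exploit that $\psi$ may be chosen to equal $1$ on a ball much larger than $\mathbf p(\mathrm{supp}\,\phi)$ and to flatten out, sending $\|\nabla\psi\|_{C^0}\to 0$ while keeping $\Phi\pm\phi\geq 0$; this kills the spatial contribution entirely. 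Similarly, for the temporal term one should bound $\int\int\psi|\eta'|\,d\mu_t dt\leq\mu_0(\R^n)\int_0^\infty|\eta'|\,dt$ and choose $\eta$ with total variation close to $2$, rather than using $\|\eta'\|_{C^0}\mathcal L^1(\mathrm{supp}\,\eta)$, which is not uniformly bounded for steep cutoffs. With these two adjustments one gets the clean bound $|J(\phi)|\leq 2\mu_0(\R^n)\|\phi\|_{C^0}$, matching the statement that $C=C(\mu_t)$. Apart from this, your sign bookkeeping and use of $\mu_0(\R^n)<\infty$ (which, via the Brakke inequality, gives both $\sup_t\mu_t(\R^n)\leq\mu_0(\R^n)$ and $\|h\|_{L^2(d\mu_tdt)}^2\leq\mu_0(\R^n)$) are exactly right.
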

        Next, before the proof of Proposition \ref{princi}, we will need some consequences of Huisken's monotonicity formula for MCF. Now we briefly state the consequences necessary to prove the main result. See \cite[Section 3.2]{tonegawa2019brakke} for discussion below in detail. First, we set some notation. For $ ( y , s ) \in \R^n \times \R^+ $, we define the backwards heat kernel $ \rho_{(y,s)} $ by
        \[
            \rho_{(y,s)}(x,t) := \frac{1}{(4\pi(s-t))^{(n-1)/2}} \exp\left( -\frac{|x-y|^2}{4(s-t)} \right) \quad \text{for all } 0 \leq t < s \text{ and } x \in \R^n,
        \]
        as well as the truncated kernel
        \[
            \hat{\rho}^r_{(y,s)}(x,t) := \eta\left(\frac{x-y}{r}\right)\,\rho_{(y,s)}(x,t),
        \]
        where $ r>0 $ and $ \eta \in C^{\infty}_c (B_2(0);\R^+) $ is a suitable cut-off function such that $ \eta \equiv 1 $ on $ B_1 (0) $, $ 0 \leq \eta \leq 1 $, $ |\nabla \eta| \leq 2 $ and $ \| \nabla^2 \eta \| \leq 4 $. The following is a variant of Huisken's monotonicity formula for MCF (for example, see \cite[Section 3.2]{tonegawa2019brakke} in detail).

        \begin{lemma}
            \label{Huiskenmonoto}
            Let $ \{ \mu_t \}_{ t \in \R^+ } $ is a Brakke flow in Definition \ref{Brakke}. Then there exists $ c ( n ) > 0 $ with the following property. For every $ 0 \leq t_1 < t_2 < s < \infty $, $ y \in \R^n $ and $ r > 0 $, it holds that
            \begin{equation}
                \left.\mu_t (\hat{\rho}^r_{(y,s)}(x,t))\right|^{t_2}_{t=t_1} \leq c ( n )\,\frac{ t_2 - t_1 }{ r^2 } \sup_{ t \in [ t_1 , t_2 ] } \frac{ \mu_t ( B_{2r} ) }{ r^{n-1} }.
            \end{equation}
        \end{lemma}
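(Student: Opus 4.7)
The plan is to apply the Brakke inequality (\ref{Brakkeineq}) with test function $\phi=\hat{\rho}^r_{(y,s)}$ on the slab $[t_1,t_2]\subset[0,s)$ and to split the right-hand side into the standard Huisken part coming from the untruncated kernel $\rho=\rho_{(y,s)}$ and a cutoff error supported, by construction of $\eta$, on the annulus $B_{2r}(y)\setminus B_r(y)$. Setting $\eta_r(x):=\eta((x-y)/r)$ so that $\hat{\rho}^r=\eta_r\rho$, the product rule gives
\[
(\nabla\hat{\rho}^r-\hat{\rho}^r h)\cdot h+\partial_t\hat{\rho}^r \;=\; \eta_r\bigl[(\nabla\rho-\rho h)\cdot h+\partial_t\rho\bigr]+\rho\,\nabla\eta_r\cdot h.
\]

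For the bracketed term I would use the explicit expressions for $\nabla\rho$ and $\partial_t\rho$ together with Brakke's perpendicularity $h\perp T_x\mu_t$ ($\mu_t$-a.e.) to derive the pointwise identity
\[
(\nabla\rho-\rho h)\cdot h+\partial_t\rho \;=\; -\rho\Big|h+\tfrac{(x-y)^\perp}{2(s-t)}\Big|^2+\rho\,h\cdot\tfrac{(x-y)^\perp}{2(s-t)}+\mathrm{div}_{T_x\mu_t}\!\Bigl(\tfrac{\rho(x-y)}{2(s-t)}\Bigr).
\]
Multiplying by $\eta_r$ and integrating against $d\mu_t$, I would then apply the first-variation identity $\int\mathrm{div}_{T_x\mu_t}X\,d\mu_t=-\int X\cdot h\,d\mu_t$ (available since $\mu_t$ is integral with generalized mean curvature $h$) to $X=\eta_r\rho(x-y)/(2(s-t))$; this collapses the last two terms into a single error
\[
E_1(t):=-\int\nabla\eta_r\cdot\frac{\rho(x-y)^T}{2(s-t)}\,d\mu_t,
\]
which is supported on $B_{2r}\setminus B_r$, while the squared piece contributes the non-positive Huisken term $-\int\eta_r\rho|h+(x-y)^\perp/(2(s-t))|^2\,d\mu_t$ that I simply discard.

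The main obstacle is the remaining leftover $E_2(t):=\int\rho\,\nabla\eta_r\cdot h\,d\mu_t$, which still contains $h$ explicitly; since Brakke flow provides only $L^2(d\mu_t dt)$ control on $h$, no pointwise absorption by Cauchy--Schwarz is available at the level of the test function. I would resolve this by a second application of the first variation to $Y=\rho\nabla\eta_r$, obtaining
\[
E_2(t)\;=\;-\int\bigl[\nabla\rho\cdot P_{T_x\mu_t}\nabla\eta_r+\rho\,\mathrm{div}_{T_x\mu_t}(\nabla\eta_r)\bigr]d\mu_t,
\]
thereby trading the mean curvature for bounded spatial derivatives of $\eta_r$ and $\rho$.

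Finally, I would invoke the Gaussian decay of $\rho$ on the annulus. Setting $u=r^2/(4(s-t))$ and using that $u^{k}e^{-u}$ is bounded for every $k\ge 0$, elementary estimates yield the uniform bounds $\rho\le C(n)\,r^{1-n}$, $|\nabla\rho|\le C(n)\,r^{-n}$ and $\rho/(s-t)\le C(n)\,r^{-(n+1)}$ on $B_{2r}\setminus B_r$ for every $t<s$. Combined with $|\nabla\eta_r|\le 2/r$ and $\|\nabla^2\eta_r\|\le 4/r^2$, these imply $|E_1(t)|+|E_2(t)|\le C(n)\,\mu_t(B_{2r})/r^{n+1}$. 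Integrating over $[t_1,t_2]$ (with a routine temporal-cutoff approximation to justify using $\phi=\hat{\rho}^r$ in (\ref{Brakkeineq})) and factoring $r^{-(n+1)}=r^{-2}\cdot r^{-(n-1)}$ yields the claim with a suitable $c(n)$.
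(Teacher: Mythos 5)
Your proposal is correct and follows the standard route for localizing Huisken's monotonicity formula in the Brakke-flow setting, which is essentially what the cited reference (Tonegawa's lecture notes, Section 3.2) carries out: decompose $\hat\rho^r = \eta_r\rho$, use the pointwise Huisken identity together with Brakke's perpendicularity and the first-variation formula to move derivatives off $\rho$, discard the nonpositive square, and estimate the remaining cutoff errors via the uniform bounds $\rho\le c(n)r^{1-n}$, $|\nabla\rho|\le c(n)r^{-n}$, $\rho/(s-t)\le c(n)r^{-n-1}$ on $B_{2r}\setminus B_r$. The key step you identified — applying the first variation a second time to $Y=\rho\nabla\eta_r$ to trade $\nabla\eta_r\cdot h$ for bounded derivatives, since a pointwise Cauchy–Schwarz absorption against $-\eta_r\rho|h|^2$ fails where $\eta_r$ vanishes — is exactly the right fix, and the algebra and final bookkeeping $(t_2-t_1)\,r^{-n-1} = \frac{t_2-t_1}{r^2}\,r^{1-n}$ give the stated constant.
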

        
        As a consequence, Lemma \ref{Huiskenmonoto} and a local mass bounded (Definition \ref{Brakke}(2)) indicate the following, which provides the upper bound of mass density ratio (\cite[Proposition 3.5]{tonegawa2019brakke}).
        \begin{lemma}
            \label{upperdensityratio}
            Let $ \{ \mu_t \}_{ t \in \R^+ } $ is a Brakke flow in Definition \ref{Brakke}, and let $ d \mu = d\mu_t dt $. For any $ \delta > 0 $, $ x_0 \in \R^n $ and $ R > 0 $, there exists $ c (\delta,n,R) > 0 $ with the following property. For any $ t \in [ \delta , \infty ) $ and $ B_r(y) \subset B_R(x_0) $, we have
            \[
                \frac{\mu_t(B_r(y))}{r^{n-1}} \leq c(\delta,n,R) \sup_{s\in[0,t]}\mu_s(B_{3R}(x_0)).
            \]
            In particular, $ \Theta^{*n} (\mu,(x,t)) < \infty $ for all $ ( x , t ) \in \R^n \times (0,\infty) $ holds.
        \end{lemma}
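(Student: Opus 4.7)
The strategy is to use the variant of Huisken's monotonicity formula from Lemma \ref{Huiskenmonoto} together with a judicious choice of backwards heat kernel whose pole is placed just above $(y,t)$ in time. Fix $t\in[\delta,\infty)$ and $B_r(y)\subset B_R(x_0)$; the inclusion forces $r\leq R$ and $B_{2R}(y)\subset B_{3R}(x_0)$, which is the ball I would like all mass estimates to land on. Setting $s:=t+r^2$ and using $\hat{\rho}^R_{(y,s)}$ (cut-off radius $R$ centered at $y$), I observe that on $B_r(y)\subset B_R(y)$ the cut-off $\eta((x-y)/R)$ equals $1$ and
\[
\hat{\rho}^R_{(y,s)}(x,t)=(4\pi r^2)^{-(n-1)/2}\exp(-|x-y|^2/(4r^2))\geq c(n)\,r^{-(n-1)},
\]
so that $\mu_t(B_r(y))\leq c(n)\,r^{n-1}\,\mu_t(\hat{\rho}^R_{(y,s)})$. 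This reduces the claim to estimating $\mu_t(\hat{\rho}^R_{(y,s)})$.

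Next I would set $t_1:=t-\delta/2$, which lies in $[\delta/2,\infty)$ by $t\geq\delta$, and apply Lemma \ref{Huiskenmonoto} on $[t_1,t]$ with cut-off radius $R$. Since $\hat{\rho}^R_{(y,s)}$ is supported in $B_{2R}(y)\subset B_{3R}(x_0)$, the error term is bounded by $c(n)\,\delta\,(2R^{n+1})^{-1}\,\sup_{\tau\in[0,t]}\mu_\tau(B_{3R}(x_0))$. For the boundary term at $t_1$, one has $s-t_1=r^2+\delta/2\geq\delta/2$, so $\rho_{(y,s)}(\cdot,t_1)\leq(2\pi\delta)^{-(n-1)/2}$ and
\[
\mu_{t_1}(\hat{\rho}^R_{(y,s)})\leq(2\pi\delta)^{-(n-1)/2}\,\mu_{t_1}(B_{2R}(y))\leq(2\pi\delta)^{-(n-1)/2}\,\sup_{\tau\in[0,t]}\mu_\tau(B_{3R}(x_0)).
\]
Combining these estimates with the initial reduction yields the desired inequality with some constant $c(\delta,n,R)$. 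The crucial point is that the choice $t-t_1=\delta/2$ is a fixed length independent of $t$, which keeps the constant bounded uniformly in $t\in[\delta,\infty)$; anchoring $t_1=0$ instead would let the error term grow with $t$.

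For the last assertion $\Theta^{*n}(\mu,(x,t))<\infty$ at every $(x,t)\in\R^n\times(0,\infty)$, fix such a point, pick $\delta\in(0,t)$ and any $R>0$ with $x\in B_R(x)$. Slicing in time yields
\[
\mu(B_r(x,t))\leq\int_{t-r}^{t+r}\mu_\tau(B_r(x))\,d\tau\leq 2r\sup_{\tau\in[t-r,t+r]}\mu_\tau(B_r(x))
\]
for space-time balls $B_r(x,t)\subset\R^{n+1}$; for $r$ small enough that $t-r\geq\delta$, the first part of the lemma plus the local mass bound (Definition \ref{Brakke}(2)) give $\mu_\tau(B_r(x))\leq C\,r^{n-1}$, hence $\mu(B_r(x,t))=O(r^n)$ and the upper density is finite. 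I do not foresee a genuine obstacle; the proof is essentially an accounting exercise, and the only care required is in the uniform-in-$t$ choice of $t_1$ so that Lemma \ref{Huiskenmonoto} contributes a bounded constant.
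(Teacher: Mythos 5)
Your argument is correct and is essentially the canonical proof: place the pole of the truncated backwards heat kernel at time $t+r^2$, bound $\mu_t(B_r(y))$ below by the kernel on $B_r(y)$, then run Huisken's monotonicity (Lemma \ref{Huiskenmonoto}) back a fixed time $\delta/2$ (not to $0$, as you rightly emphasize) to get a $\delta$-dependent bound on the boundary term and an error controlled by the local mass in $B_{3R}(x_0)$. The paper does not re-prove this lemma but cites it as \cite[Proposition 3.5]{tonegawa2019brakke}, whose proof is exactly this argument, so you have reconstructed the intended route.
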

	For the proof of Proposition \ref{princi}, we need the following general facts on sets of finite perimeter (\cite[Theorem 18.11]{maggi2012sets}).
	\begin{lemma}
		If $ E \subset \R^n \times \R $ is a set of locally finite perimeter, then the horizontal section
		\[
		E_t = \{ x \in \R^n \mid ( x , t ) \in E \}
		\]
		is a set of locally finite perimeter in $ \R^n $ for a.e.\,$t\in\mathbb R$, and the following properties hold:
		\begin{description}
			\item[\quad\textup{(1)}] $ \mathcal{ H }^{ n - 1 } ( \partial^* E_t \Delta ( \partial^* E )_t ) = 0 $,
			\item[\quad\textup{(2)}] $ \mathbf{ p } ( \nu_E ( x , t ) ) \neq 0 $ for $ \mathcal{ H }^{ n - 1 } $-a.e. $ x \in ( \partial^* E )_t $,
			\item[\quad\textup{(3)}] $ \nabla \chi_{ E_t } = | \mathbf{ p } ( \nu_E ( x , t ) ) |^{ - 1 } \mathbf{ p } ( \nu_E ( x , t ) )\,\mathcal{ H }^{ n - 1 } \llcorner_{ ( \partial^* E )_t } $.
		\end{description}
		\label{Slice}
	\end{lemma}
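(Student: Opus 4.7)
The plan is to derive all four claims from two classical tools: the slicing theorem for BV functions applied to $\chi_E$ in the vertical direction, and the coarea formula for Lipschitz maps on rectifiable sets applied to the projection $\mathbf{q}\colon\partial^* E\to\R$. The starting point is De Giorgi's structure theorem, which gives $\nabla'\chi_E=-\nu_E\,\mathcal H^n\llcorner_{\partial^* E}$ with $\partial^* E$ countably $n$-rectifiable.

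Applying the BV slicing theorem in the last coordinate to $\chi_E\in BV_{\mathrm{loc}}(\R^n\times\R)$ immediately yields that $E_t$ has locally finite perimeter for $\mathcal L^1$-a.e.\ $t\in\R$, and identifies $\nabla\chi_{E_t}$ with the horizontal part of the disintegration of $\nabla'\chi_E$ along vertical fibers. This already settles the existence part of the conclusion and provides the skeleton for (1) and (3).

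Next, I apply the coarea formula to $\mathbf{q}$ restricted to $\partial^* E$. At any $(x,t)\in\partial^* E$ where the approximate tangent plane $T_{(x,t)}(\partial^* E)$ exists, the tangential Jacobian of $\mathbf{q}$ equals $|P_{T_{(x,t)}(\partial^* E)}(\mathbf e_{n+1})|$, and since $T_{(x,t)}(\partial^* E)=\mathrm{span}(\nu_E(x,t))^\perp$, a short linear algebra computation shows this equals $|\mathbf p(\nu_E(x,t))|$. The coarea formula then yields
\[
\int_{\partial^* E}\psi\,|\mathbf p(\nu_E)|\ d\mathcal H^n=\int_{\R}\int_{(\partial^* E)_t}\psi\ d\mathcal H^{n-1}\,dt
\]
for every nonnegative Borel $\psi$. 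Testing with $\psi$ equal to the indicator of $\{\mathbf p(\nu_E)=0\}$ makes the right-hand side vanish, which proves (2) for a.e.\ $t$. For (1) and (3), comparison of $\nabla\chi_{E_t}$ (supported on $\partial^* E_t$) with the horizontal slice of $-\nu_E\,\mathcal H^n\llcorner_{\partial^* E}$ (supported on $(\partial^* E)_t$) via the coarea identity converts the ambient density $\mathbf p(\nu_E)$ on $\partial^* E$ into the $(n-1)$-density $|\mathbf p(\nu_E)|^{-1}\mathbf p(\nu_E)$ on $(\partial^* E)_t$, the inverse factor accounting for the Jacobian in the disintegration. This simultaneously yields the explicit formula in (3) and forces $\partial^* E_t=(\partial^* E)_t$ modulo $\mathcal H^{n-1}$-null sets, which is (1).

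The main obstacle is precisely the identification in (1): the slice of the reduced boundary must agree $\mathcal H^{n-1}$-a.e.\ with the reduced boundary of the slice. This requires verifying that the approximate tangent plane to $\partial^* E_t\subset\R^n$ at $x$ coincides $\mathcal H^{n-1}$-a.e.\ with the projection of $T_{(x,t)}(\partial^* E)$, which follows for $\mathcal L^1$-a.e.\ $t$ from Lebesgue differentiation of the coarea decomposition together with the density characterization of the reduced boundary. The remaining three claims then fall out as direct consequences of the coarea identity and De Giorgi's theorem.
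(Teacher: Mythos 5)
The paper does not prove this lemma at all: it is stated as a black box, citing Maggi, \emph{Sets of Finite Perimeter and Geometric Variational Problems}, Theorem 18.11. So there is no paper proof to compare against; you have supplied the argument that the paper delegates to the reference.

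Your argument is correct and is in fact essentially the standard proof of Maggi's Theorem 18.11, built on exactly the two tools you name. The linear-algebra step is right: for $T=\nu_E^\perp$ one has $P_T(\mathbf e_{n+1})=\mathbf e_{n+1}-(\mathbf q(\nu_E))\nu_E$, so $|P_T(\mathbf e_{n+1})|^2=1-\mathbf q(\nu_E)^2=|\mathbf p(\nu_E)|^2$, giving the tangential coarea factor. Combining the BV slicing identity $\int_{\R}\int_{\R^n}\phi(\cdot,t)\cdot d\nabla\chi_{E_t}\,dt=-\int_{\partial^*E}\phi\cdot\mathbf p(\nu_E)\,d\mathcal H^n$ with the coarea formula for $\mathbf q|_{\partial^*E}$ yields, for $\mathcal L^1$-a.e.\ $t$, the vector-measure identity in (3); (2) follows by testing coarea with $\chi_{\{\mathbf p(\nu_E)=0\}}$; and (1) follows directly from (3) by taking total variations (both sides give $\mathcal H^{n-1}\llcorner_{\partial^*E_t}=\mathcal H^{n-1}\llcorner_{(\partial^*E)_t}$ once (2) is known, whence the symmetric difference is $\mathcal H^{n-1}$-null). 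Two minor remarks: the passage from the integral identity holding for each test vector field to the a.e.-$t$ measure identity should be made by fixing a countable dense family of test fields before exceptional null sets in $t$ are discarded, and one needs the coarea formula once more (with $g=\chi_K$) to know $(\partial^*E)_t$ is $\mathcal H^{n-1}$-$\sigma$-finite a.e.\ so that the measure comparison yields (1). You gesture at both points (``Lebesgue differentiation of the coarea decomposition''), but the latter is what actually does the work, not an identification of tangent planes, which is a consequence rather than an input here.
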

	\begin{proof}[Proof of Proposition \ref{princi}]
        First of all, we will prove that $ \mu \llcorner_{\partial^* E} $ is a rectifiable Radon measure. It is not difficult to see that $ \mu \ll \mathcal{H}^n $. Indeed, let $ A \subset \R^n \times \R $ be a set with $ \mathcal{H}^n ( A ) = 0 $, and let the set $ D_k := \{ ( x , t ) \in \R^n \times \R^+ \mid \Theta^{*n} ( \mu , ( x , t ) ) \leq k \} $ for each $ k \in \N $. By \cite[Theorem 3.2]{simon1983lectures}, we have
        \[
            \mu ( A \cap D_k ) \leq 2^n\,k\,\mathcal{H}^n( A \cap D_k ) = 0
        \]
        for all $ k \in \N $. Furthermore, by the upper bound of mass density ratio (Lemma \ref{upperdensityratio}), we see that $ \mu ( A \backslash \bigcup_{k=1}^{\infty} D_k ) = 0 $. Thus we obtain $ \mu ( A ) = 0 $, that is, $ \mu \ll \mathcal{H}^n $ holds. Since $ \mu \ll \mathcal{H}^n $, $ | \nabla^{\prime } \chi_E | = \mathcal{H}^n \llcorner_{\partial^* E} $ and Proposition \ref{S<<dmudtprop}, we see that
        \[
            \mu \llcorner_{\partial^* E} \ll | \nabla^{\prime} \chi_E | , \quad | \nabla^{\prime} \chi_E | \ll \mu \llcorner_{\partial^* E}.
        \]
        By Radon--Nikod\'{y}m theorem, there exists a function $f=(d\mu\llcorner_{\partial^* E} )/d|\nabla'\chi_E|$ with $0<f<\infty$ 
        for $|\nabla'\chi_E|$-a.e., $ f \in L_{loc}^1 (| \nabla^{\prime} \chi_E |) $ and 
        $\mu\llcorner_{\partial^* E}=f|\nabla'
        \chi_E|=f\mathcal H^n\llcorner_{\partial^* E}$. This shows that $\mu\llcorner_{\partial^* E}$ is a rectifiable Radon measure and the tangent
        space $T_{(x,t)}(\mu\llcorner_{\partial^* E})$ with multiplicity $f$ exists for $\mathcal H^n$-a.e.\,$(x,t)\in \partial^* E\cap\{t>0\}$. For the next step, we prove that $ T_{(x,t)}\,\mu = T_{(x,t)} (\partial^* E) $ for $ \mathcal{H}^n $-a.e.\,$ ( x, t ) \in \partial^* E \cap \{ t > 0 \} $. Now, by \cite[Theorem 3.5]{simon1983lectures}, we see that
        \[
            \limsup_{r\to+0} \frac{ \mu (B^{n+1}_r(x,t)) \setminus \partial^* E ) }{ r^n } = 0 \quad \text{for $ \mathcal{H}^n $-a.e.\,} (x,t) \in \partial^* E \cap \{ t > 0 \}.
        \]
        Let then $ \phi \in C^0_c (B^{n+1}_1(0)) $ be arbitrary, we have
        \begin{equation*}
        \begin{split}
            \lim_{r\to+0} &\left| \int_{\R^n\times(0,\infty)\setminus\partial^* E} \frac{1}{r^n}\,\phi\left( \frac{1}{r} (y-x,s-t) \right) d\mu(y,s)\,\right|\\
            &\leq \| \phi \|_{C^0} \limsup_{r\to+0} \frac{ \mu (B^{n+1}_r(x,t)) \setminus \partial^* E ) }{ r^n } = 0
    \end{split}
        \end{equation*}
        for $ \mathcal{H}^n $-a.e.\,$ ( x, t ) \in \partial^* E \cap \{ t > 0 \} $. Thus, by $ f \in L_{loc}^1 (| \nabla^{\prime} \chi_E |) $, we obtain at each Lebesgue point of $f$
        \begin{equation*}
            \begin{split}
                \lim_{r\to+0} \int_{\R^n\times(0,\infty)} \frac{1}{r^n}\,\phi&\left( \frac{1}{r} (y-x,s-t) \right) d\mu(y,s)\\
                &= \lim_{r\to+0} \int_{\partial^* E} \frac{1}{r^n}\,\phi\left( \frac{1}{r} (y-x,s-t) \right)\,\frac{ d \mu }{ d | \nabla^{\prime} \chi_E | } (y,s)\ d\mathcal{H}^n(y,s)\\
                &= 
                f(x,t) \int_{T_{(x,t)} (\partial^* E)} \phi(y,s)\ d\mathcal{H}^n (y,s)
            \end{split}
        \end{equation*}
        for all $ \phi \in C^0_c (\R^n\times\R) $ and $ \mathcal{H}^n $-a.e.\,$ ( x, t ) \in \partial^* E \cap \{ t > 0 \} $. This completes the proof of $ T_{(x,t)}\,\mu = T_{(x,t)}(\partial^* E) $.
        \vskip.5\baselineskip
        By Proposition \ref{L2cor}, Proposition \ref{BtoL2}, and the above argument, (1) and (2) are proved. Next, we prove (3) and (4). By Lemma \ref{Slice}, we see that the following for a.e.\,$ t > 0 $ and $ \mathcal{H}^{n-1} $-a.e.\,$ x \in (\partial^* E)_t $:
        \begin{align}
            &\mathcal{H}^{n-1} (\partial^* E_t \Delta (\partial^* E)_t) = 0, \label{Sliceprop1}\\
            &\mathbf{p}(\nu_E (x,t)) \neq 0 ,\label{Sliceprop2}\\
            &\nu_{E_t}(x) = \frac{\mathbf{p}(\nu_E(x,t))}{|\mathbf{p}(\nu_E(x,t))|}. \label{Sliceprop3}
        \end{align}
        Let $ A := \{ t > 0 \mid \text{ (\ref{Sliceprop1}) fails} \} $ and for every $ t > 0 $ set $ A_t := \{ x \in (\partial^* E)_t \mid x \notin \partial^* E_t \text{ or (\ref{Sliceprop2})-(\ref{Sliceprop3}) fail} \} $, so that $ \mathcal{L}^1 (A) = 0 $ and $ \mathcal{H}^{n-1} (A_t) = 0 $ for every $ t \in (0,\infty) \setminus A $. Consider then the characteristic function $ \chi ( x , t ) := \chi_{A_t} (x) $ on $ \R^n \times (0,\infty) $, since $ \mathcal{L}^1 (A) = 0 $ and $ \mathcal{H}^{n-1} (A_t) = 0 $ for every $ t \in (0,\infty) \setminus A $, we have
        \begin{equation*}
            \begin{split}
                \int_{\partial^* E} \chi (x,t)\,|\nabla^{\partial^* E} (\mathbf{q}(x,t))|\ d\mathcal{H}^{n} (x,t)
                &= \int_0^{\infty} \int_{(\partial^* E)_t} \chi (x,t)\ d\mathcal{H}^{n-1} dt\\
                &= \int_0^{\infty} \mathcal{H}^{n-1}(A_t)\ dt = \int_A \mathcal{H}^{n-1} (A_t)\ dt = 0,
            \end{split}
        \end{equation*}
        where we used the co-area formula in the first line, and where $ \nabla^{ \partial^* E } $ is the gradient on the tangent plane of $ \partial^* E $, that is,
        \[
        \nabla^{ \partial^* E } \mathbf{ q } ( x , t ) = P_{T_{(x,t)}(\partial^* E)} (\nabla \mathbf{ q } ( x , t ) ).
        \]
        Here, combining (1) and (2), we see that
        \[
            \begin{pmatrix}
				h ( x , t )\\
				1
			\end{pmatrix} \in T_{(x,t)} (\partial^* E) \quad \text{at $ \mathcal{H}^n $-a.e.\,$ (x,t) \in \partial^* E \cap \{ t>0 \} $},
        \]
        which implies $ |\nabla^{\partial^* E} (\mathbf{q}(x,t))| > 0 $ for $ \mathcal{H}^n $-a.e.\,$ (x,t) \in \partial^* E \cap \{ t>0 \} $. Hence, it must be $ \chi (x,t) = 0 $ for $ \mathcal{H}^n $-a.e.\,$ (x,t) \in \partial^* E \cap \{ t>0 \} $, thus the first part of (3) and (4) is proved. For the proof of the identity $ T_x\,\mu_t = T_x (\partial^* E_t) $, it is obtained by Lemma \ref{E<<mu}, Lemma \ref{upperdensityratio} and repeating the argument of (1) at fixed $ t $.
        \vskip.5\baselineskip
        Finally, we prove (5). Taking the $ (x,t) \in \partial^* E $ as satisfying (1)-(4) of this Proposition, we can calculate as
        \[
            {}^t ( z , 0 ) \cdot \nu_E ( x , t )
            = z \cdot \mathbf{p}( \nu_E(x,t) )
            = | \mathbf{p} (\nu_E(x,t)) |\,(z\cdot\nu_{E_t}(x))
            = 0
        \]
        for all $ z \in T_x(\partial^* E_t) $. This completes the proof of (5).
        \end{proof}

	\subsection{Boundaries move by mean curvature}
        In this subsection, we prove Theorem \ref{mainresult} by rephrasing the velocity $ V $ in Proposition \ref{velocity} as the mean curvature 
        and by using geometric measure theory. The argument for this rephrasing corresponds to the proof of the area formula (\ref{theareachange}). Since Definition \ref{generalizedBVflow}(ii) is treated in Remark \ref{abstcontirmk}, we can deduce that $ ( \{ \mu_t \}_{ t \in \R^+ } , \{ E_t \}_{ t \in \R^+ } ) $ as in Section \ref{translative} is a generalized BV flow.
        \vskip.5\baselineskip
	\begin{proof}[Proof of Theorem \ref{mainresult}]
		We fix a test function $ \phi \in C^1_c ( \R^n \times ( 0 , \infty ) ) $ arbitrarily. Then, by using Gauss-Green's theorem for set of finite perimeter, we have
		\begin{equation}
			\int_{ \R^n \times ( 0 , \infty ) } \partial_t \phi\, \chi_E\ dx dt = \int_{ \partial^* E } \phi\,\mathbf{ q } ( \nu_{ E } )\, d\mathcal{ H }^n. \label{Dt1S}
		\end{equation}
		Let $ G $ be the set satisfying Proposition \ref{princi}(1)-(5). Then for all $ ( x , t ) \in G $, we have
		\begin{equation}
            \label{span}
			T_{ ( x , t ) }\, \mu = ( T_x ( \partial^* E_t ) \times \{ 0 \} ) \oplus \mathrm{ span }
			\begin{pmatrix}
				h ( x , t )\\
				1
			\end{pmatrix} \quad (\text{by Proposition \ref{princi}(2)}).
		\end{equation}
		By $ h ( x , t ) \perp T_x \, \mu_t $, (\ref{span}), Proposition \ref{princi}(1) and (4), we have
		\begin{equation}
			\nu_{ E } ( x , t ) = \frac{ 1 }{ \sqrt{ 1 + | h ( x , t ) |^2 } }
			\begin{pmatrix}
				\nu_{ E_t } ( x )\\
				- h ( x , t ) \cdot \nu_{ E_t } ( x )
			\end{pmatrix}.
		\end{equation}
		According to (\ref{span}) and $ T_{ ( x , t ) }\, \mu = T_{ ( x , t ) } ( \partial^* E ) $ on $ G $, we obtain that the co-area factor of the projection ${\bf q}$ satisfies
		\begin{equation}
			| \nabla^{ \partial^* E } \mathbf{ q } ( \nu_E ( x , t ) ) | = \frac{ 1 }{ \sqrt{ 1 + | h ( x , t ) |^2 } },
			\label{areaelement}
		\end{equation}
        Due to (\ref{Dt1S})-(\ref{areaelement}) and the co-area formula, we compute as
		\begin{equation}
			\begin{split}
				&\int_{ \R^n \times ( 0 , \infty ) } \partial_t \phi\,\chi_E\ dxdt
				= - \int_{ G } \phi\,h \cdot \nu_{ E_t } \,\frac{ 1 }{ \sqrt{ 1 + | h |^2 } }\ d\mathcal{ H }^n
				= - \int_{ \partial^* E } \phi\,h \cdot \nu_{ E_t }\,| \nabla^{ \partial^* E } \mathbf{ q } ( \nu_E ) |\ d\mathcal{ H }^n\\
				&= - \int_{ 0 }^{ \infty } \int_{ \partial^* E \cap \{ \mathbf{ q } = t \} } \phi\,h \cdot \nu_{ E_t }\ d\mathcal{ H }^{ n - 1 } dt
				= - \int_{ 0 }^{ \infty } \int_{ \partial^* E_t } \phi\,h \cdot \nu_{ E_t }\ d\mathcal{ H }^{ n - 1 } dt, \label{GBV}
			\end{split}
		\end{equation}
		where we used $ \mathcal{ H }^n ( \partial^* E \setminus G ) = 0 $.
        By the same cut-off argument of Lemma \ref{areacontilemma} for (\ref{GBV}) and Remark \ref{abstcontirmk}, we deduce
		\[
		\int_{ E_{ t_2 } } \phi ( x , t_2 )\ dx - \int_{ E_{ t_1 } } \phi ( x , t_1 )\ dx = \int_{ t_1 }^{ t_2 } \int_{ E_t } \partial_t \phi\ dx dt + \int_{ t_1 }^{ t_2 } \int_{ \partial^* E_t } \phi\ h \cdot \nu_{ E_t }\ d\mathcal{ H }^{ n - 1 } dt
		\]
		for all $ 0 < t_1 < t_2 < \infty $. Use the continuity of Remark \ref{abstcontirmk}, we obtain the above equality for all $ 0 \leq t_1 < t_2 < \infty $ and all $ \phi \in C^1_c ( \R^n \times \R^+ ) $. This completes the proof.
	\end{proof}
    
	\appendix
	\section{Measure-Function Pairs}
	Here, we recall the notion of measure-function pairs introduced by Hutchinson in \cite{hutchinson1986second}.
	\begin{definition}
		Let $ E \subset \R^n $ be an open set and let $ \mu $ be a Radon measure on $ E $. Suppose $ f \in L^1 ( \mu ; \R^d ) $. Then we say that $ ( \mu , f ) $ is $ \R^d $-valued measure-function pair over $ E $.
	\end{definition}
	Next, we define the notion of convergence for a sequence of $ \R^d $-valued measure-function pairs over $ E $.
	\begin{definition}
		Let $ \{ ( \mu_i , f_i ) \}_{ i = 1 }^{ \infty } $ and $ ( \mu , f ) $ be $ \R^d $-valued measure-function pairs over $ E $. Suppose
		\[
		\mu_i \rightharpoonup \mu
		\] 
		as Radon measure on $ E $. Then we call $ ( \mu_i , f_i ) $ converges to $ ( \mu , f ) $ in the weak sense if
		\[
		\int_E f_i \cdot \phi\ d \mu_i \to \int_E f \cdot \phi\ d \mu
		\]
		for all $ \phi \in C^0_c ( E ; \R^d ) $.
	\end{definition}
	We present a less general version of \cite[Theorem 4.4.2]{hutchinson1986second} to the extent that it can be used in this paper.
	\begin{theorem}
		\label{measurefunctiontheorem}
		Suppose that $ \R^d $-valued measure-function pairs $ \{ ( \mu_i , f_i ) \}_{ i = 1 }^{ \infty } $ are satisfied
		\[
		\sup_i \int_E | f_i |^2\ d \mu_i < \infty.
		\]
		Then the following hold:
		\begin{description}
			\item[\quad\textup{(1)}] There exist a subsequence $ \{ ( \mu_{ i_j } , f_{ i_j } ) \}_{ j = 1 }^{ \infty } $ and $ \R^d $-valued measure-function pair $ ( \mu , f ) $ such that $ ( \mu_{ i_j } , f_{ i_j } ) $ converges to $ ( \mu , f ) $ as measure-function pair.
			\item[\quad\textup{(2)}] If $ ( \mu_{ i_j } , f_{ i_j } ) $ converges to $ ( \mu , f ) $ then
			\[
			\int_E | f |^2\ d \mu \leq \liminf_{ j \to \infty } \int_E | f_{ i_j } |^2\ d \mu_{ i_j } < \infty.
			\]
		\end{description}
	\end{theorem}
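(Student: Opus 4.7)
The plan is a two-stage compactness extraction for part (1), followed by a Legendre-duality lower semicontinuity argument for part (2). Throughout, I assume (following the standard convention in Hutchinson's framework, and as holds in the application in Proposition \ref{velocity} via Lemma \ref{EconvergenstoMlemma}) that the Radon measures $\mu_i$ are locally uniformly bounded on compact subsets of $E$, so that weak-$*$ compactness of Radon measures is available.

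For part (1), I first extract a subsequence, still denoted $\{\mu_i\}$, with $\mu_i \rightharpoonup \mu$ as Radon measures on $E$. Next, I view $\nu_i := f_i\,\mu_i$ as $\R^d$-valued Radon measures. For any compact $K \subset E$, Cauchy--Schwarz gives
\[
|\nu_i|(K) \leq \mu_i(K)^{1/2} \left(\int_K |f_i|^2\, d\mu_i\right)^{1/2},
\]
which is uniformly bounded in $i$ by the hypothesis combined with the local bound on $\mu_i$. A diagonal extraction yields $\nu_{i_j} \rightharpoonup \nu$ for some $\R^d$-valued Radon measure $\nu$ on $E$. To identify $\nu = f\mu$ with $f \in L^2(\mu;\R^d)$, I test against $\phi \in C_c^0(E;\R^d)$ and use the same Cauchy--Schwarz bound propagated through the limit:
\[
\left|\int_E \phi \cdot d\nu\right| = \lim_j \left|\int_E \phi \cdot f_{i_j}\, d\mu_{i_j}\right| \leq \liminf_j \left(\int_E |\phi|^2\, d\mu_{i_j}\right)^{1/2} \left(\int_E |f_{i_j}|^2\, d\mu_{i_j}\right)^{1/2} \leq C \left(\int_E |\phi|^2\, d\mu\right)^{1/2},
\]
where the last inequality uses $|\phi|^2 \in C_c^0(E)$ together with $\mu_{i_j} \rightharpoonup \mu$. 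Consequently $\phi \mapsto \int \phi \cdot d\nu$ extends continuously to a bounded linear functional on $L^2(\mu;\R^d)$, and Riesz representation furnishes $f \in L^2(\mu;\R^d)$ with $\int \phi \cdot d\nu = \int \phi \cdot f\, d\mu$, giving $\nu = f\mu$ and the desired MFP convergence.

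For part (2), I exploit the Legendre-type identity
\[
\int_E |f|^2\, d\mu = \sup\left\{2\int_E f\cdot\phi\, d\mu - \int_E |\phi|^2\, d\mu : \phi \in C_c^0(E;\R^d)\right\},
\]
valid for any measurable $f$ (allowing the value $+\infty$) by density of $C_c^0(E;\R^d)$ in $L^2(\mu;\R^d)$. For each fixed $\phi$, the pointwise inequality $2 f_{i_j}\cdot\phi - |\phi|^2 \leq |f_{i_j}|^2$, integrated against $\mu_{i_j}$, yields
\[
\int_E \left(2 f_{i_j}\cdot\phi - |\phi|^2\right) d\mu_{i_j} \leq \int_E |f_{i_j}|^2\, d\mu_{i_j}.
\]
The left-hand side converges to $\int_E (2 f\cdot\phi - |\phi|^2)\, d\mu$ as $j \to \infty$: the first term by the definition of MFP convergence, the second because $|\phi|^2 \in C_c^0(E)$ and $\mu_{i_j} \rightharpoonup \mu$. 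Taking the supremum over $\phi$ produces the claimed bound and, simultaneously, confirms $f \in L^2(\mu;\R^d)$.

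The main technical subtlety lies in the identification $\nu = f\mu$ in part (1): the Riesz representation step depends on the density of $C_c^0(E;\R^d)$ in $L^2(\mu;\R^d)$, which is standard for Radon measures on open subsets of $\R^n$. A secondary point is the initial weak-$*$ compactness of $\{\mu_i\}$, which is not explicit in the hypotheses but is implicit in the Hutchinson framework \cite{hutchinson1986second} and is delivered externally by Lemma \ref{EconvergenstoMlemma} in the intended application.
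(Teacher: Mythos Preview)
The paper does not supply its own proof of this theorem: it is stated in the appendix as a quotation of \cite[Theorem 4.4.2]{hutchinson1986second} and left unproved. Your argument is the standard one (and essentially Hutchinson's): extract a weak limit of the vector measures $f_i\mu_i$, identify it as $f\mu$ via the $L^2$-to-$L^2$ Riesz bound, and obtain lower semicontinuity from the Fenchel/Legendre dual representation of $\int|f|^2\,d\mu$. It is correct. You are also right to flag that the statement as printed omits the local uniform mass bound on $\{\mu_i\}$ needed for the initial compactness step; this is indeed part of Hutchinson's hypotheses and is supplied in the paper's application by Lemma~\ref{EconvergenstoMlemma}.
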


	\bibliography{myref.bib}

\begin{thebibliography}{10}

\bibitem{bertini2017stochastic}
L.~Bertini, P.~Butt{\`a}, and A.~Pisante.
\newblock Stochastic {A}llen--{C}ahn approximation of the mean curvature flow:
  large deviations upper bound.
\newblock {\em Archive for Rational Mechanics and Analysis}, 224(2):659--707,
  (2017).

\bibitem{brakke1978motion}
K.~A. Brakke.
\newblock {\em The motion of a surface by its mean curvature}, volume~20 of
  {\em Mathematical notes}.
\newblock Princeton University Press, Princeton, 1978.

\bibitem{chen1991uniqueness}
Y.~G. Chen, Y.~Giga, and S.~Goto.
\newblock Uniqueness and existence of viscosity solutions of generalized mean
  curvature flow equations.
\newblock {\em Journal of Differential Geometry}, 33(3):749--786, (1991).

\bibitem{Edelen+2020+95+137}
N.~Edelen.
\newblock The free-boundary brakke flow.
\newblock {\em Journal f\"ur die reine und angewandte Mathematik (Crelles
  Journal)}, 2020(758):95--137, (2020).

\bibitem{evans1991motion}
L.~C. Evans and J.~Spruck.
\newblock Motion of level sets by mean curvature. {I}.
\newblock {\em Journal of Differential Geometry}, 33(3):635--681, (1991).

\bibitem{fischer2020local}
J.~Fischer, S.~Hensel, T.~Laux, and T.~Simon.
\newblock The local structure of the energy landscape in multiphase mean
  curvature flow: Weak-strong uniqueness and stability of evolutions.
\newblock {\em arXiv preprint arXiv:2003.05478}, (2020).

\bibitem{hutchinson1986second}
J.~E. Hutchinson.
\newblock Second fundamental form for varifolds and the existence of surfaces
  minimising curvature.
\newblock {\em Indiana University Mathematics Journal}, 35(1):45--71, (1986).

\bibitem{ilmanen1994elliptic}
T.~Ilmanen.
\newblock Elliptic regularization and partial regularity for motion by mean
  curvature.
\newblock {\em Memoirs of the American Mathematical Society}, 108(520), (1994).

\bibitem{kasai2014general}
K.~Kasai and Y.~Tonegawa.
\newblock A general regularity theory for weak mean curvature flow.
\newblock {\em Calculus of Variations and Partial Differential Equations},
  50(1):1--68, (2014).

\bibitem{laux2016convergence}
T.~Laux and F.~Otto.
\newblock Convergence of the thresholding scheme for multi-phase mean-curvature
  flow.
\newblock {\em Calculus of Variations and Partial Differential Equations},
  55(5):1--74, (2016).

\bibitem{laux2018convergence}
T.~Laux and T.~M. Simon.
\newblock Convergence of the {A}llen-{C}ahn equation to multiphase mean
  curvature flow.
\newblock {\em Communications on Pure and Applied Mathematics},
  71(8):1597--1647, (2018).

\bibitem{luckhaus1995implicit}
S.~Luckhaus and T.~Sturzenhecker.
\newblock Implicit time discretization for the mean curvature flow equation.
\newblock {\em Calculus of variations and partial differential equations},
  3(2):253--271, (1995).

\bibitem{maggi2012sets}
F.~Maggi.
\newblock {\em Sets of finite perimeter and geometric variational problems: an
  introduction to Geometric Measure Theory}, volume 135 of {\em Cambridge
  Studies in Advanced Mathematics}.
\newblock Cambridge University Press, Cambridge, 2012.

\bibitem{roger2008allen}
L.~Mugnai and M.~R{\"o}ger.
\newblock The {A}llen--{C}ahn action functional in higher dimensions.
\newblock {\em Interfaces and Free Boundaries}, 10(1):45--78, (2008).

\bibitem{SchulzeWhite+2020+281+305}
F.~Schulze and B.~White.
\newblock A local regularity theorem for mean curvature flow with triple edges.
\newblock {\em Journal f\"ur die reine und angewandte Mathematik (Crelles
  Journal)}, 2020(758):281--305, (2020).

\bibitem{simon1983lectures}
L.~Simon.
\newblock {\em Lectures on geometric measure theory}, volume~3 of {\em
  Proceedings of the Centre for Mathematical Analysis}.
\newblock Australian National University, Canberra, 1983.

\bibitem{stuvard2022endtime}
S.~Stuvard and Y.~Tonegawa.
\newblock End-time regularity theorem for {B}rakke flows.
\newblock {\em arXiv preprint, arXiv:2212.07727}, (2022).

\bibitem{StuvardTonegawa+2022}
S.~Stuvard and Y.~Tonegawa.
\newblock On the existence of canonical multi-phase {B}rakke flows.
\newblock {\em Advances in Calculus of Variations}, (2022).
\newblock Ahead of print, https://doi.org/10.1515/acv-2021-0093.

\bibitem{tonegawa2014second}
Y.~Tonegawa.
\newblock A second derivative {H}{\"o}lder estimate for weak mean curvature
  flow.
\newblock {\em Advances in Calculus of Variations}, 7(1):91--138, (2014).

\bibitem{tonegawa2019brakke}
Y.~Tonegawa.
\newblock {\em {B}rakke's {M}ean {C}urvature {F}low: {A}n {I}ntroduction}.
\newblock SpringerBriefs in Mathematics. Springer, Singapore, 2019.

\bibitem{brian2005regularity}
B.~White.
\newblock A local regularity theorem for mean curvature flow.
\newblock {\em Annals of Mathematics}, 161(3):1487--1519, (2005).

\bibitem{WhiteMCFlecture2015}
B.~White.
\newblock Mean curvature flow (math 258) lecture notes.
\newblock https://web.stanford.edu/\~{}ochodosh/MCFnotes.pdf, 2015.

\bibitem{WhiteMCF2021}
B.~White.
\newblock Mean curvature flow with boundary.
\newblock {\em Ars Inveniendi Analytica}, page~43, (2021).
\newblock arXiv:1901.03008.

\end{thebibliography}
	\bibliographystyle{abbrv}
	
	\hrulefill
	
\end{document}